\newcommand{\edit}[1]{{\color{black}#1}}
\newcommand{\issue}[1]{{\color{red}#1}}
\title{\Large Global Convergence  of Policy Gradient Methods\\ to (Almost) Locally Optimal Policies}
\begin{document}
\author{Kaiqing Zhang \and Alec Koppel \and Hao Zhu \and Tamer Ba\c{s}ar}
\date{January 3, 2019}
\maketitle

\begin{abstract}
Policy gradient (PG) methods are a widely used reinforcement learning methodology in many applications such as videogames, autonomous driving, and robotics. In spite of its empirical success, a rigorous understanding of the global convergence of PG methods is lacking in the literature. In this work, we close the gap by viewing PG methods from a nonconvex optimization perspective. In particular,  we propose a new variant of PG methods for infinite-horizon problems that uses a random rollout horizon for the Monte-Carlo estimation of the policy gradient. This method then yields an unbiased estimate of the policy gradient with bounded variance, which enables the  tools from nonconvex optimization to be applied to establish global convergence. Employing this perspective, we first recover the convergence results with rates to the stationary-point policies in the literature. More interestingly, motivated by advances  in nonconvex optimization, we modify the proposed PG method by introducing periodically enlarged stepsizes. The modified algorithm is shown to escape saddle points under mild assumptions on the reward and the policy parameterization. Under a further strict saddle points assumption, this result establishes convergence to essentially locally-optimal policies of the underlying  problem, and thus bridges the gap in existing literature on the convergence of PG methods.  Results from experiments on the inverted pendulum are then provided to corroborate our theory, namely, by slightly reshaping the reward function to satisfy our assumption, unfavorable saddle points can be avoided and better limit points can be attained. Intriguingly, this empirical finding justifies the benefit of reward-reshaping from a nonconvex optimization perspective.

\end{abstract}

\section{Introduction}  

In reinforcement learning (RL) \cite{sutton2017reinforcement,bertsekas2005dynamic}, an autonomous agent moves through a state space  and seeks to learn a policy  which maps states to a probability distribution over actions to maximize a long-term accumulation of rewards. When the agent selects a given action at a particular state, a reward is revealed and a random transition to a new state occurs according to a probability density that only depends on the current state and action, i.e., state transitions are Markovian. {This evolution process is usually modeled as a Markov decision process (MDP). } Under this setting, the agent must evaluate the merit of different actions by interacting with the environment. 
Two dominant approaches to reinforcement learning have emerged: those based on optimizing {the accumulated reward  directly from the policy space}, referred to as  ``direct policy search'', and those based on {finding the value function by} 
solving the Bellman fixed point equations \cite{bellman57a}. The goal of this work is to rigorously understand the former approach of direct policy search, specifically policy gradient (PG) methods \cite{sutton2000policy}. Policy search has gained traction recently, thanks to  its ability to scale gracefully to large and even continuous spaces  \cite{silver2014deterministic,schulman2015trust} and to incorporate deep networks {as function approximators  }\cite{lillicrap2015continuous,mnih2016asynchronous}.  

Despite the increasing prevalence of policy gradient methods, their global convergence in the infinite-horizon discounted setting,  which is conventional in dynamic programming \cite{bertsekas2005dynamic},  is not yet well understood. This gap  stems firstly  from the fact that obtaining unbiased estimates of the policy gradient through sampling is often elusive. 
Specifically, following the Policy Gradient Theorem \cite{sutton2000policy},
obtaining an unbiased estimate of the policy gradient requires two significant conditions to hold: (i) the state-action pair is drawn from the discounted state-action occupancy measure  of the Markov chain under the policy; (ii) the estimate of the action-value (or $Q$) function induced by the  policy is unbiased.   
This gap also results from the fact that  the value function to be maximized in RL is in general \emph{nonconvex} with respect to the policy parameter \cite{fazel2018global,zhang2019policy,bhandari2019global,liu2019neural,agarwal2019optimality}. 
In the same vein as our work, there is a surging interest in studying the global convergence of PG methods, see the recent work  \cite{fazel2018global,zhang2019policy,bu2019lqr}, and concurrent work \cite{bhandari2019global,liu2019neural,agarwal2019optimality}. In particular, orthogonal to our work, these work considered convergence to the \emph{global optimum} in several \emph{special} 	RL settings: \cite{fazel2018global,zhang2019policy,bu2019lqr} considered the linear quadratic setting, \cite{bhandari2019global,agarwal2019optimality} considered the tabular setting, \cite{liu2019neural,wang2019neural} focused on the setting with overparameterized neural networks for  
function approximation, and \cite{agarwal2019optimality} also considered the setting when the optimality  gap of using certain policy class can be quantified.  
In contrast, our focus is on the case  where the  nonconvexity might be general, so that solving the problem can be  NP-hard.    



{
When one restricts the  focus to \emph{episodic} reinforcement learning, 
Monte-Carlo rollout may be used to obtain \emph{unbiased} estimates of the Q-function. In particular, the  rollout   simulates the MDP under certain policy up to a finite time horizon, and then collects  the rewards and state-action histories along the trajectory. }
 However, this finite-horizon rollout, {though generally used in practice,} is known to introduce bias in estimating an \emph{infinite-horizon}  discounted value function. 
Such  a bias in estimating the policy gradient for infinite-horizon problems has been identified in the earlier  work \cite{baxter2001infinite,bartlett2011experiments}, both analytically and empirically.  
To address this bias issue, we employ in this work  random geometric time rollout horizons, a technique first proposed in \cite{santi2018stochastic}. This rollout procedure  allows us to obtain unbiased estimates of the $Q$ function, using only rollouts of finite horizons. Moreover, the random rollout horizon also creates an unbiased sampling of the state-action pair from the discounted  occupancy measure \cite{sutton2000policy}.  With these two challenges addressed,  the policy gradient can be estimated unbiasedly. Consequently,    the policy gradient methods  can be more naturally connected to  the classical stochastic programming algorithms  \cite{shapiro2009lectures}, {where the unbiasedness of the stochastic gradient   is a critical assumption}. We refer to our algorithm as \emph{random-horizon}  policy gradient (RPG), to emphasize that the finite horizon of the Monte-Carlo rollout is random. 


Leveraging this connection, we are able to address a noticeably open issue in policy {gradient methods}: a technical understanding of the effect of the policy parameterization on both the limiting and finite-iteration algorithm behaviors. In particular, it is well known  in nonconvex optimization that with only first-order information and no additional hypothesis, convergence to a stationary point {with zero gradient-norm }is the best one may hope to achieve \cite{wright1999numerical}.  
Indeed, this is the type of points that most current PG methods are guaranteed to converge to, as pointed out by  \cite{agarwal2019optimality}.  
{However, in some asymptotic analyses for policy gradient methods with function approximation \cite{pirotta2015policy}, or their  variant, actor-critic algorithms \cite{bhatnagar2008incremental,bhatnagar2009natural,bhatnagar2010actor,chow2017risk}, it was claimed that the limit points of the algorithms starting from any initialization  constitute the \emph{locally-optimal policies}, i.e., the algorithms enjoy global  convergence  to the local-optima.  
However, by the theory of stochastic approximation \cite{borkar2008stochastic}, such a claim can only be made locally, i.e., 
the local-optimality can only  be obtained if the algorithm starts around a local minima,   under the assumption that a \emph{strict} Lyapunov function exists. Therefore, \emph{global}  convergence of PG methods to the \emph{actual locally-optimal }policies, though claimed in words in some literature, is still an open question.  
Another line of  theoretical studies  of policy gradient methods only focuses on showing the one-step policy improvement \cite{pirotta2013adaptive,pirotta2015policy,papini2017adaptive}, by choosing appropriate stepsizes and/or batch data sizes. Such one-step result still does not imply any global convergence result.    
In summary, the misuse of the term \emph{locally-optimal policy} and the lack of studying global convergence property of PG methods  motivate us to further investigate this problem from a nonconvex optimization perspective. Thanks to  the analytical tools from optimization, we are able to first recover the asymptotic convergence, and then provide the convergence rate, to \emph{stationary-point} policies. 
}

Encouraged by this connection between nonconvex optimization and policy search, we then  tackle a related question: what implications do recent algorithms   that can escape saddle points for nonconvex problems (\cite{jin2017escape,daneshmand2018escaping}) have on policy gradient methods in RL? To answer this question,   we identify several  structural properties of  RL problems   that can be exploited to mitigate the underlying  nonconvexity, {which rely on  some  key assumptions on the } policy parameterization and reward. Specifically, {the reward needs to be bounded and {either strictly positive or negative}, and the policy parameterization need to be  \emph{regular}, i.e., its Fisher  information {matrix} is positive definite (a conventional assumption in RL \cite{kakade2002natural}).} {Under these mild  conditions,} we can establish  that policy gradient methods can escape saddle points and converge to approximate  \emph{second-order} stationary points  {with high probability},  when a \emph{periodically enlarged stepsize} strategy is employed. We {refer to the resulting method as Modified RPG (MRPG)}. 
{Nevertheless, the strict positivity/{negativity} of reward function may amplify the variance of the  gradient estimate, compared to the setting  that has reward values with both signs but of smaller magnitude. This increased variance  can be alleviated by introducing a \emph{baseline} in the gradient estimate, as advocated  by  existing  work  \cite{greensmith2004variance,peters2006policy,bhatnagar2008incremental}. Therefore, we propose two further modified updates that include the baselines, both  shown to converge to approximate second-order stationary points  as well.

\vspace{6pt}
\noindent \textbf{Main Contribution:} The main contribution of the present work is three-fold: i) we propose a series of random-horizon PG methods that unbiasedly estimate the true policy gradient for \emph{infinite-horizon discounted}  MDPs, which facilitates the use of analytical tools from nonconvex optimization to establish their convergence to \emph{stationary-point} policies; ii) by virtue of such a  connection of PG methods and nonconvex optimization, we propose   modified RPG methods with periodically enlarged stepsizes, with guaranteed convergence  to actual \emph{locally-optimal policies} under mild conditions on the reward functions and parametrization  of the policies; iii) we connect  the condition on the reward function to the  reward-reshaping technique advocated in empirical RL studies, justifying its benefit, both analytically and empirically, from a nonconvex optimization perspective. Additionally, we believe such a perspective opens the door to exploiting more advancements in nonconvex optimization to improve the convergence properties of policy gradient methods in RL.  
}


%

%
The rest of the paper is organized as follows. In  \S\ref{sec:prob}, we clarify the problem setting of reinforcement learning and the technicalities of Markov Decision Processes. In  \S\ref{sec:alg} we develop the policy gradient method using random geometric Monte-Carlo rollout horizons,  i.e., the RPG method. Further, we establish both its limiting (Theorem \ref{thm:alg1_as_conv}) and finite-sample  (Theorem \ref{thm:alg1_conv_rate} and Corollary \ref{coro:alg1_conv_rate_const_step}) behaviors under standard conditions. We note that Corollary \ref{coro:alg1_conv_rate_const_step} provides  one of the first constant learning rate results in reinforcement learning. In { \S\ref{sec:conv_second_stationary_point}}, we  focus on problems with positive bounded rewards and policies whose parameterizations are \emph{regular}, and propose a variant of policy gradient method that employs a periodically enlarged stepsize scheme.
 The salient feature of this modified algorithm is that it is able to escape saddle points, an undesirable subset of stationary points, and converge to approximate second-order stationary points (Theorem \ref{thm:conv_MRPG}). Numerical experiments in  \S\ref{sec:simulations} corroborate our main findings: for Algorithm \ref{alg:RPG}, the use of random rollout horizons avoids stochastic gradient bias and hence exhibits reliable convergence that matches the theoretically established rates; moreover, for the {modified RPG algorithm}, use of periodically enlarged stepsizes makes it possible to escape   from undesirable saddle points and yields   better limiting solutions. All proofs, which constitute an integral part of the paper, are relegated to nine appendices at the end of the paper, so as not to disrupt the flow of the presentation of the main results. 
 

\vspace{6pt}
\noindent \textbf{Notations:} We denote the probability distribution over the space $\cS$ by $\cP(\cS)$, and the  set of integers $\{1,\cdots,N\}$ by $[N]$. We  use $\RR$ to denote the set of real numbers, and $\EE$ to denote the expectation operator. We let $\|\cdot\|$  denote the $2$-norm of a vector in $\RR^d$, or the  spectral norm of a matrix in $\RR^{d\times d}$. We  use $|\cA|$ to denote the cardinality of a finite set $\cA$, or the area of a  region $\cA$, i.e., $|\cA|=\int_{\cA}da$. For any matrix $A\in\RR^{d\times d}$, we use $A\succ 0$ and $A\succeq 0$ to denote that $A$ is positive definite and positive semi-definite, respectively.  We  use $\lambda_{\min}(A)$ and $\lambda_{\max}(A)$ to denote, respectively, the smallest and largest eigenvalues of some square symmetric matrix  $A$, respectively. We use $\EE_X$ or $\EE_{X\sim f(x)}$ to denote the expectation with respect to random variable $X$. Otherwise specified, we   use $\EE$ to denote the full expectation with respect to all random variables.

\section{Problem Formulation}\label{sec:prob}
%
 In reinforcement learning, an autonomous agent moves through a state space $\cS$ and takes actions that belong to some action space  $\cA$. Here the spaces $\cS$ and $\cA$ are allowed to be either finite sets, or compact real vector spaces, i.e.,  $\cS\subseteq   \mathbb{R}^q$ and $\cA\subseteq  \mathbb{R}^p$.  
An action at the state causes a transition to the next state, where the transition mapping  that depends on the current state and action; every such transition generates  a reward  revealed by the environment. The goal is for the agent  to accumulate as much reward as possible in the long term. 
This situation can be formalized as a Markov decision process (MDP) characterized by a tuple $(\cS,\cA,\PP,R,\gamma)$ with Markov kernel $\PP(s'\given s,a):\cS\times\cA\to \cP(\cS)$ that determines the transition probability from $(s,a)$ to state ${s}'$. 
$\gamma\in(0,1)$ is the discount factor. $R(\cdot,\cdot)$ is the reward that is a function\footnote{$R(s_t,a_t)$ may be a random variable given $(s_t,a_t)$. Here without loss of generality, we assume that it is deterministic for simplicity.} of $s$ and $a$. 


At each time $t$, the agent executes an action $a_t\in\cA$ given the current state $s_t\in\cS$, following a possibly stochastic policy $\pi:\cS\to \cP(\cA)$, i.e., $a_t\sim \pi(\cdot\given s_t)$. 
Then, given the state-action pair $(s_t,a_t)$, the agent observes a reward $r_t=R(s_t,a_t)$. 
Thus, under any policy $\pi$ that maps states to actions, one can define the value function $V_{\pi}:\cS\to\RR$ as 
\$
V_{\pi}(s)=\EE_{a_t\sim \pi(\cdot\given s_t),s_{t+1}\sim \PP(\cdot\given s_t,a_t)}\bigg(\sum_{t=0}^\infty \gamma^t r_t\bigggiven s_0=s\bigg),
\$
which quantifies the long term expected accumulation of rewards discounted by $\gamma$. We can further define the value $V_{\pi}:\cS\times\cA\to\RR$ conditioned on a given initial action as the action-value, or Q-function as $Q_{\pi}(s,a)=\EE\big(\sum_{t=0}^\infty \gamma^t r_t\biggiven s_0=s,a_0=a\big)$.
We also define $A_{\pi}(s,a)=Q_{\pi}(s,a)-V_{\pi}(s)$   for any $s,a$ to be the \emph{advantage function}. 
Given any initial state $s_0$, 
the  goal  is to find the optimal policy $\pi$ that maximizes  the long-term return $V_{\pi}(s_0)$, i.e., to solve the following optimization problem 
\#\label{equ:max_goal}
\max_{\pi\in\Pi}~~V_{\pi}(s_0),
\#
when the model, i.e.,  the transition probability $\PP$ and the reward function $R$, is unknown to the agent. 
In this work, we investigate policy search methods to solve \eqref{equ:max_goal}. In general, we must search over an arbitrarily complicated function class $\Pi$ which may include those which are unbounded and discontinuous. To mitigate this issue, we propose to \emph{parameterize}  policies $\pi$ in $\Pi$ by a vector $\theta\in\RR^d$, i.e., $\pi=\pi_{\theta}$, which gives rise to RL algorithms called \emph{policy gradient methods} \cite{konda2000actor,bhatnagar2009natural,castro2010convergent}. 
With this parameterization, we may reduce a search over arbitrarily complicated function class $\Pi$ in \eqref{equ:max_goal} to one over the Euclidean space $\mathbb{R}^d$. Nonparametric parameterizations are also possible \cite{koppel2017pkgtd,koppel2018kqlearning}, but here we fix the parameterization in order to simplify exposition.
For notational convenience, we define $J(\theta):=V_{\pi_{\theta}}(s_0)$, then the vector-valued optimization problem can be written as  
\#\label{equ:max_goal_para}
\max_{\theta\in\mathbb{R}^d}~~J(\theta). 
\#

Generally, the value function is nonconvex with respect to the parameter $\theta$, meaning that obtaining a globally optimal solution to \eqref{equ:max_goal_para} is 
NP-hard, 
 unless in several special RL settings that have been identified very recently \cite{fazel2018global,bhandari2019global}. 
  In fact, the limit point of most gradient-based  methods to nonconvex optimization is a stationary solution, which could either be a saddle point or a local optimum. Usually the local optima achieve reasonably good performance, in some cases comparable to the global optima, whereas the saddle points are undesirable and can stall training procedures. Therefore, it is beneficial to design methods that may escape saddle points -- see recent efforts on escaping saddle points  with first-order methods, e.g., perturbed gradient descent \cite{ge2015escaping,jin2017escape,daneshmand2018escaping}, and second-order methods \cite{dauphin2014identifying,xu2017newton}.
 
 Our goal in this work is to develop stochastic gradient methods to maximize $J(\theta)$ and rigorously understand the interplay between its limiting properties and the necessity of augmenting the algorithmic update, reward function, and policy parameterization, all toward escaping undesirable limit points. This issue was first observed and addressed  in \cite{konda1999actor}  by adding random perturbations in the reinforcement learning update (which may amplify variance), based on the asymptotic convergence results in \cite{pemantle1990nonconvergence}. Here we provide a modern perspective and incorporate the latest developments in nonconvex optimization.

\section{Policy Gradient Methods}\label{sec:alg}

In this section, we connect stochastic gradient ascent, as it is called in stochastic  optimization, with the policy gradient method, a flavor of direct policy search, in reinforcement learning.
We start with the   following standard assumption on the regularity of the MDP problem and the smoothness of the parameterized policy $\pi_\theta$. 

\begin{assumption}\label{assum:regularity}
	Suppose the reward function $R$ and the parameterized policy $\pi_\theta$ satisfy the following conditions:
	\vspace{-6pt}
\begin{enumerate}[label=(\roman*)]
		\item The absolute value of the reward $R$ is  uniformly bounded, say   by $U_{R}$, i.e., $|R(s,a)|\in[0,U_{R}]$ for any $(s,a)\in\cS\times\cA$. \label{as:bounded_reward}
		\item The policy $\pi_{\theta}$ is  differentiable with respect to $\theta$, and  $\nabla\log\pi_{\theta}(a\given s)$, known as  		the \emph{score function}  		corresponding to the distribution $\pi_{\theta}(\cdot\given s)$,  exists. Moreover, it is $L_\Theta$-Lipschitz and has bounded norm for any  $(s,a)\in\cS\times\cA$,  
		\#
		&\|\nabla \log\pi_{\theta^1}(a\given s)-\nabla \log\pi_{\theta^2}(a\given s)\|\leq L_\Theta\cdot \|\theta^1-\theta^2\|,\text{~~for any~~} \theta^1,\theta^2,\label{equ:assum_L_Lip}\\
		&\|\nabla\log\pi_{\theta}(a\given s)\|\leq B_{\Theta},\text{~~for some constant $B_\Theta$~~for any~~} \theta.\label{equ:assum_score_bnded}
		\#
		\label{as:bounded_policy}
		\normalsize
		for some constant $B_\Theta>0$. 
%
	\end{enumerate}
\end{assumption}

%
%
Note that the  boundedness of the reward function in Assumption\ref{assum:regularity}\ref{as:bounded_reward}  is standard in the literature of policy gradient/actor-critic algorithms \cite{bhatnagar2008incremental,bhatnagar2009natural,castro2010convergent,zhang2018fully,zhang18cdc}. 
The uniform boundedness of $R$ also implies that the absolute value of  the Q-function is upper bounded by $U_{R}/(1-\gamma)$, since by definition 
\$
|Q_{\pi_\theta}(s,a)|\leq 
\sum_{t=0}^\infty \gamma^t \cdot U_{R}=U_{R}/(1-\gamma), ~~\text{for any}~~ (s,a)\in\cS\times\cA.
\$
The same bound also applies  to  $V_{\pi_\theta}(s)$ for any $\pi_\theta$ and $s\in\cS$, and thus to the objective $J(\theta)$ which is defined as $V_{\pi_\theta}(s_0)$, i.e.,
\$
|V_{\pi_\theta}(s)|\leq U_{R}/(1-\gamma),~~\text{for any $s\in\cS$},~~\quad |J(\theta)|\leq U_{R}/(1-\gamma).
\$

In addition, the conditions   \eqref{equ:assum_L_Lip} and \eqref{equ:assum_score_bnded} 
have also been adopted in several recent work on   the convergence analysis of policy gradient  algorithms \cite{castro2010convergent,pirotta2015policy,papini2018stochastic,papini2019smoothing}. 
 Both of the conditions    can be readily satisfied   by many common parametrized policies such as the  Boltzmann policy \cite{konda1999actor} and the Gaussian policy \cite{doya2000reinforcement}. For example, for Gaussian policy\footnote{Note that in practice, the action space $\cA$ is bounded, thus a truncated Gaussian policy over $\cA$ is often used; see \cite{papini2018stochastic}. } in continuous spaces, $\pi_\theta(\cdot\given s)=\cN(\phi(s)^\top\theta,\sigma^2)$, where $\cN(\mu,\sigma^2)$ denotes the Gaussian distribution with mean $\mu$ and variance $\sigma^2$, and $\phi(s)$ is the feature vector that incorporates some domain knowledge to approximate the mean action at state $s$. Then the score function has the form $[a-\phi(s)^\top\theta]\phi(s)/\sigma^2$, which satisfies \eqref{equ:assum_L_Lip} and \eqref{equ:assum_score_bnded} if the following three conditions hold: the norm of the feature  $\|\phi(s)\|$ is bounded; the parameter $\theta$ lies in some bounded set; and the actions $a\in\cA$ is bounded.

 Under Assumption \ref{assum:regularity},   the gradient of $J(\theta)$ with respect to the policy parameter $\theta$, given by the Policy Gradient Theorem \cite{sutton2000policy},  has the following  form\footnote{Note that here we use $\int$ to represent both summation over finite sets and integral over continuous spaces.}:

\#
\nabla J(\theta)&=\int_{s\in\cS,a\in\cA}\sum_{t=0}^\infty\gamma^t \cdot p(s_t=s\given s_0,\pi_\theta)\cdot\nabla \pi_{\theta}(a\given s)\cdot Q_{\pi_\theta}(s,a)dsda\label{equ:policy_grad_1}\\
&=\frac{1}{1-\gamma}\int_{s\in\cS,a\in\cA}(1-\gamma)\sum_{t=0}^\infty\gamma^t \cdot p(s_t=s\given s_0,\pi_\theta)\cdot\nabla \pi_{\theta}(a\given s)\cdot Q_{\pi_\theta}(s,a)dsda\notag\\
&=\frac{1}{1-\gamma}\int_{s\in\cS,a\in\cA}\rho_{\pi_\theta}(s)\cdot\pi_{\theta}(a\given s)\cdot\nabla \log[\pi_{\theta}(a\given s)]\cdot Q_{\pi_\theta}(s,a)dsda\notag\\
&=\frac{1}{1-\gamma}\cdot\EE_{(s,a)\sim \rho_{\theta}(\cdot,\cdot)}\big[\nabla\log\pi_{\theta}(a\given s)\cdot Q_{\pi_\theta}(s,a)\big]. \label{equ:policy_grad_3}
\#
Here, we denote by  $p(s_t=s\given s_0,\pi_\theta)$  the probability that state $s_t$ equals $s$ given initial state $s_0$ and policy parameter $\theta$, and the distribution $\rho_{\pi_\theta}(s)=(1-\gamma)\sum_{t=0}^\infty\gamma^t p(s_t=s\given s_0,\pi_\theta)$ which has been shown to be  a valid probability measure over the state $\cS$  in \cite{sutton2000policy}. We refer to  $\rho_{\pi_\theta}(s)$ as the  \emph{discounted state-occupancy
measure} hereafter. For notational convenience, we let $\rho_{\theta}(s,a)=\rho_{\pi_\theta}(s)\cdot \pi_{\theta}(a\given s)$, which denotes the \emph{discounted state-action  occupancy
measure}.
%

In addition, based on the fact that for any function $b:\cS\to\RR$ independent of action $a$, 
\$
\int_{a\in\cA}\pi_\theta(a\given s)\nabla\log\pi_{\theta}(a\given s)\cdot b(s)da=\nabla \int_{a\in\cA}\pi_{\theta}(a\given s)da\cdot b(s)=\nabla 1\cdot b(s)=0,~~\text{for~any}~~s\in\cS,
\$
the policy gradient in \eqref{equ:policy_grad_3} can be written as 
\$
\nabla J(\theta)=\frac{1}{1-\gamma}\cdot\EE_{(s,a)\sim \rho_{\theta}(\cdot,\cdot)}\big\{\nabla\log\pi_{\theta}(a\given s)\cdot [Q_{\pi_\theta}(s,a)-b(s)]\big\},
\$
where $b(s)$ is usually referred to as a \emph{baseline} function. One common choice of the baseline is the state-value function $V_{\pi_\theta}(s)$, which gives the following advantage-based policy gradient
\#
\nabla J(\theta)=\frac{1}{1-\gamma}\cdot\EE_{(s,a)\sim \rho_{\theta}(\cdot,\cdot)}\big\{\nabla\log\pi_{\theta}(a\given s)\cdot A_{\pi_\theta}(s,a)\big\}.\label{equ:policy_grad_5}
\#
In this work, we devise methods that can use iterative updates based on the classical policy gradient \eqref{equ:policy_grad_3} or its variant that makes use of  the advantage function \eqref{equ:policy_grad_5} through the aforementioned identity regarding baselines. 
First note that under Assumption   \ref{assum:regularity}, we can establish the Lipschitz continuity of the policy gradient $\nabla J(\theta)$ as in the following lemma, whose proof is deferred to \S\ref{apx_lemma:lip_policy_grad}.

\begin{lemma}[Lipschitz-Continuity of Policy Gradient]\label{lemma:lip_policy_grad}
Under Assumption \ref{assum:regularity}, 
the policy gradient $\nabla J(\theta)$ is Lipschitz continuous with some constant $L>0$, i.e., 
	for any $\theta^1,\theta^2\in\RR^d$
	\$
	\|\nabla J(\theta^1)-\nabla J(\theta^2)\|\leq L\cdot \|\theta^1-\theta^2\|, 
	\$
	where the value of the Lipschitz constant $L$ is defined as 
	 \#\label{equ:L_Theta_def}
	L:=\frac{U_{R}\cdot L_\Theta}{(1-\gamma)^2} +\frac{(1+\gamma)\cdot U_R\cdot B^2_{\Theta}}{(1-\gamma)^3}. 
	\#
\end{lemma}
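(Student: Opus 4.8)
The plan is to control the spectral norm of the Hessian $\nabla^2 J(\theta)$ uniformly in $\theta\in\RR^d$ by the constant $L$ in \eqref{equ:L_Theta_def}, and then conclude via the mean value inequality $\|\nabla J(\theta^1)-\nabla J(\theta^2)\|\le\big(\sup_{\theta}\|\nabla^2 J(\theta)\|\big)\cdot\|\theta^1-\theta^2\|$. To reach the Hessian I would first recast the policy gradient \eqref{equ:policy_grad_3} in trajectory form. Writing $\tau=(s_0,a_0,s_1,a_1,\dots)$ for a sample path generated from the fixed $s_0$ under $\pi_\theta$, with path measure $p_\theta(\tau)=\prod_{t\ge 0}\pi_\theta(a_t\given s_t)\,\PP(s_{t+1}\given s_t,a_t)$, equation \eqref{equ:policy_grad_3} is equivalent to $\nabla J(\theta)=\EE_{\tau\sim p_\theta}[\Psi(\theta,\tau)]$ with
\[
\Psi(\theta,\tau):=\sum_{t\ge 0}\gamma^t\,\nabla\log\pi_\theta(a_t\given s_t)\cdot\widehat Q_t(\tau),\qquad \widehat Q_t(\tau):=\sum_{k\ge 0}\gamma^k r_{t+k},
\]
where the random reward-to-go satisfies $\EE[\widehat Q_t(\tau)\given s_t,a_t]=Q_{\pi_\theta}(s_t,a_t)$ and, by Assumption \ref{assum:regularity}\ref{as:bounded_reward}, $|\widehat Q_t(\tau)|\le U_R/(1-\gamma)$.

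Next I would differentiate under the expectation. Since $\widehat Q_t(\tau)$ carries no $\theta$-dependence once the path $\tau$ is fixed, the likelihood-ratio identity gives
\[
\nabla^2 J(\theta)=\EE_{\tau\sim p_\theta}\Big[\nabla_\theta\Psi(\theta,\tau)+\Psi(\theta,\tau)\,\big(\nabla_\theta\log p_\theta(\tau)\big)^\top\Big],
\]
with $\nabla_\theta\Psi(\theta,\tau)=\sum_{t\ge 0}\gamma^t\,\nabla^2\log\pi_\theta(a_t\given s_t)\,\widehat Q_t(\tau)$ and $\nabla_\theta\log p_\theta(\tau)=\sum_{j\ge 0}\nabla\log\pi_\theta(a_j\given s_j)$. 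The first expectation is bounded immediately: reading \eqref{equ:assum_L_Lip} as the bound $\|\nabla^2\log\pi_\theta(a\given s)\|\le L_\Theta$ on the Jacobian of the score, together with $|\widehat Q_t(\tau)|\le U_R/(1-\gamma)$,
\[
\big\|\EE_{\tau}[\nabla_\theta\Psi(\theta,\tau)]\big\|\le\sum_{t\ge 0}\gamma^t\cdot L_\Theta\cdot\frac{U_R}{1-\gamma}=\frac{U_R L_\Theta}{(1-\gamma)^2},
\]
which is exactly the first term of $L$.

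For the second expectation I would first reindex $\Psi(\theta,\tau)=\sum_{k\ge 0}\gamma^k r_k\sum_{t=0}^{k}\nabla\log\pi_\theta(a_t\given s_t)$, so the integrand becomes a sum of rank-one matrices $\gamma^k r_k\,\nabla\log\pi_\theta(a_t\given s_t)\,\nabla\log\pi_\theta(a_j\given s_j)^\top$ with $t\le k$ and $j\ge 0$. The key observation is that every term with $j>k$ has zero expectation: conditioning on the history through $s_j$ makes $r_k$ and $\nabla\log\pi_\theta(a_t\given s_t)$ (with $t\le k<j$) measurable, while $\EE\big[\nabla\log\pi_\theta(a_j\given s_j)\given s_j\big]=\int\pi_\theta(a\given s_j)\nabla\log\pi_\theta(a\given s_j)\,da=\nabla 1=0$. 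Hence only the $(k+1)^2$ terms with $j\le k$ survive for each $k$, and using $\|\nabla\log\pi_\theta\|\le B_\Theta$ and $|r_k|\le U_R$,
\[
\big\|\EE_{\tau}\big[\Psi(\theta,\tau)\,(\nabla_\theta\log p_\theta(\tau))^\top\big]\big\|\le\sum_{k\ge 0}\gamma^k\,U_R\,(k+1)^2 B_\Theta^2=\frac{(1+\gamma)\,U_R B_\Theta^2}{(1-\gamma)^3},
\]
where the last step uses $\sum_{k\ge 0}(k+1)^2\gamma^k=\sum_{k\ge 0}(k+1)(k+2)\gamma^k-\sum_{k\ge 0}(k+1)\gamma^k=\frac{2}{(1-\gamma)^3}-\frac{1}{(1-\gamma)^2}$. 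Adding the two bounds gives $\sup_\theta\|\nabla^2 J(\theta)\|\le L$, and the lemma follows.

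The one point that genuinely needs care, and which I expect to be the main obstacle, is the legitimacy of differentiating under the (infinite-dimensional) trajectory integral and of manipulating the almost-surely infinite sum $\nabla_\theta\log p_\theta(\tau)=\sum_j\nabla\log\pi_\theta(a_j\given s_j)$: the bound on $\nabla^2 J$ becomes finite only after the $j>k$ terms cancel, so the interchange should be justified either by dominated convergence applied to the already-cancelled expression, or — more robustly — by running the entire argument on the finite-horizon truncations $J_T(\theta):=\EE\big[\sum_{t=0}^{T}\gamma^t r_t\big]$, for which no measure-theoretic subtlety arises and the same bounds hold uniformly in $T$ (the truncated geometric series being only smaller), and then letting $T\to\infty$ using $J_T\to J$ and $\nabla J_T\to\nabla J$. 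Everything else reduces to bookkeeping of geometric-type sums.
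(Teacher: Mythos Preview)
Your argument is correct and arrives at exactly the same constant, but the route is genuinely different from the paper's. The paper never forms the Hessian; instead it writes $\nabla J(\theta)$ in the double-sum trajectory form \eqref{equ:policy_grad_1}, subtracts the expressions at $\theta^1$ and $\theta^2$, and splits the difference into two pieces: (i) the score difference $\nabla\log\pi_{\theta^1}-\nabla\log\pi_{\theta^2}$ times the trajectory density at $\theta^1$, and (ii) the density difference $p_{\theta^1,0:t+\tau}-p_{\theta^2,0:t+\tau}$ times the score at $\theta^2$. Piece (i) gives the $U_R L_\Theta/(1-\gamma)^2$ term directly from \eqref{equ:assum_L_Lip}. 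For piece (ii) the paper Taylor-expands the product $\prod_u \pi_\theta(a_u\given s_u)$ in $\theta$, which produces the factor $(t+\tau+1)B_\Theta$ and, after summing $\sum_{t,\tau}\gamma^{t+\tau}(t+\tau+1)=(1+\gamma)/(1-\gamma)^3$, the second term of $L$. Your approach instead bounds $\|\nabla^2 J\|$, obtaining the first term from $\|\nabla^2\log\pi_\theta\|\le L_\Theta$ and the second from the score--score outer product after the $j>k$ cancellation; the $(k+1)^2$ count and the paper's $(t+\tau+1)$-with-multiplicity-$(t+\tau+1)$ count are the same series.

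What each buys: the paper's direct-difference proof is strictly more elementary in that it needs only Lipschitzness of the score, not the existence of $\nabla^2\log\pi_\theta$ (Assumption~\ref{assum:regularity}\ref{as:bounded_policy} states Lipschitz, not $C^2$; the Hessian-level hypothesis appears only later in Assumption~\ref{assum:regularity_for_CNC}), and it sidesteps the differentiation-under-the-integral issue you correctly flag. Your approach is cleaner conceptually and, interestingly, anticipates the Hessian decomposition $\cH_1+\cH_2+\cH_{12}+\cH_{12}^\top$ that the paper derives much later in the proof of Lemma~\ref{lemma:Hessian_Lip}; your zero-mean cancellation of the $j>k$ terms is exactly what collapses the naive score-product bound to something summable. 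The finite-horizon truncation you propose is the right fix for the interchange issue and would also let you avoid assuming $\nabla^2\log\pi_\theta$ exists, by bounding $\|\nabla J_T(\theta^1)-\nabla J_T(\theta^2)\|$ directly for each $T$ via your same term-counting and then passing to the limit.
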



Next, we discuss how \eqref{equ:policy_grad_3} and \eqref{equ:policy_grad_5} can be used to develop first-order stochastic approximation methods to address \eqref{equ:max_goal_para}. 
Unbiased samples of the gradient $\nabla J(\theta)$  are required to perform the stochastic gradient ascent, which hopefully converges to a stationary solution of the nonconvex optimization problem. Moreover, through the addition of carefully designed perturbations, we aim to attain a local optimum, namely, asymptotically stable stationary point, as in \cite{konda1999actor,ge2015escaping,jin2017escape}.


\begin{algorithm}[t]
	\caption{~\textbf{EstQ:}~Unbiasedly Estimating Q-function}\label{alg:est_Q}
	\centering
	\begin{algorithmic}
		\STATE \textbf{Input:} $s,a$, and $\theta$. Initialize  $\hat{Q}\leftarrow 0$, $s_0\leftarrow s$, and $a_0\leftarrow a$.
		\STATE Draw $T$ from the geometric distribution $\text{Geom}(1-\gamma^{1/2})$, i.e., $P(T=t)=(1-\gamma^{1/2})\gamma^{t/2}$.
		\FORALL{$t=0,\cdots,T-1$}
			\STATE Collect and add the instantaneous reward  $R(s_t,a_t)$ to $\hat{Q}$, $\hat{Q}\leftarrow\hat{Q}+\gamma^{t/2}\cdot R(s_t,a_t)$.
			\STATE Simulate the next state $s_{t+1}\sim \PP(\cdot\given s_t,a_t)$ and action $a_{t+1}\sim \pi(\cdot\given s_{t+1})$.
		\ENDFOR		
		\STATE Collect $R(s_T,a_T)$ by $\hat{Q}\leftarrow \hat{Q}+\gamma^{T/2}\cdot R(s_T,a_T)$.
		\RETURN $\hat{Q}$.
		\end{algorithmic}
\end{algorithm}

\vspace{6pt}
\noindent{\bf Sampling the Policy Gradient:} 
In order to obtain an unbiased sample of $\nabla J(\theta)$, it is necessary to: i) draw state-action pair $(s,a)$ from the distribution  $\rho_\theta(\cdot,\cdot)$; and ii) obtain an unbiased estimate of the Q-function $Q_{\pi_{\theta}}(s,a)$, or the advantage function $A_{\pi_{\theta}}(s,a)$ evaluated at $(s,a)$.  

Both of the requirements can be satisfied by using a random horizon $T$ that follows certain geometric distribution  in the sampling process.
In particular, to ensure the condition i) is satisfied, we  use the last sample $(s_T,a_T)$ of a finite sample  trajectory $(s_0,a_0,s_1,\cdots,s_T,a_T)$ to be the sample at which $Q_{\pi_{\theta}}(\cdot,\cdot)$ and $\nabla\log\pi_{\theta}(\cdot\given \cdot)$ are evaluated, where the horizon $T\sim \text{Geom}(1-\gamma)$. It can be shown that $(s_T,a_T)\sim \rho_\theta(\cdot,\cdot)$. Moreover, given $(s_T,a_T)$, we perform Monte-Carlo rollouts  for another horizon $T'\sim \text{Geom}(1-\gamma^{1/2})$ independent of $T$, and estimate the Q-function value $Q_{\pi_{\theta}}(s,a)$ as follows by collecting the $\gamma^{1/2}$-discounted  rewards along the trajectory:
%
%
%
%
\#\label{equ:Est_Q_ori}
\hat{Q}_{\pi_\theta}(s,a)=\sum_{t=0}^{T'}\gamma^{t/2}\cdot R(s_t,a_t)\biggiven s_0=s,a_0=a.
\#
Then, it can be shown that $\hat{Q}_{\pi_\theta}(s,a)$ unbiasedly estimates ${Q}_{\pi_\theta}(s,a)$ for any $(s,a)$ (see Theorem \ref{thm:unbiased_and_bnd_grad_est},  whose proof is given in Appendix \ref{apx_thm:unbiased_and_bnd_grad_est}). The subroutine of estimating the Q-function is summarized as  \textbf{EstQ} in Algorithm \ref{alg:est_Q}. 
%

\begin{remark} \label{remark:episodic}
Thanks to  randomness of the  horizon, we note that the aforementioned sampling process creates the first unbiased estimate of the Q-function in the \emph{discounted infinite-horizon} setting, using the Monte-Carlo rollouts of \emph{finite horizons}. 
While in practice,  usually  finite-horizon rollouts are used to approximate the infinite-horizon Q-function, e.g., in the REINFORCE algorithm, which causes bias in the Q-function  estimate, and hence the policy gradient estimate. Our sampling technique addresses this challenge, and ends up with an unbiased estimate of the policy gradient as to be introduced next. We note that the proposed sampling technique for estimating the Q-function  improves the one in \cite{santi2018stochastic} that uses $ \text{Geom}(1-\gamma)$ (instead of $ \text{Geom}(1-\gamma^{1/2})$) to generate the rollout horizon $T'$. In particular, the proposed Q-function estimate is almost surely bounded thanks to the $\gamma^{1/2}$-discount   factor in \eqref{equ:Est_Q_ori}, which later leads to  almost sure boundedness of the stochastic policy gradient, a necessary assumption required in the convergence analysis to approximate second-order stationary points in \S\ref{sec:conv_second_stationary_point}.
\end{remark}

\begin{algorithm}[t]
	\caption{~\textbf{EstV:}~Unbiasedly Estimating State-Value function}\label{alg:est_V}
	\centering
	\begin{algorithmic}
		\STATE \textbf{Input:} $s$ and $\theta$. Initialize  $\hat{V}\leftarrow 0$, $s_0\leftarrow s$, and draw $a_0\sim \pi_\theta(\cdot\given s_0)$.
		\STATE Draw $T$ from the geometric distribution $\text{Geom}(1-\gamma^{1/2})$.
		\FORALL{$t=0,\cdots,T-1$}
			\STATE Collect the instantaneous reward $R(s_t,a_t)$ and add to value $\hat{V}$;  $\hat{V}\leftarrow\hat{V}+\gamma^{t/2}\cdot R(s_t,a_t)$.
			\STATE Simulate the next state $s_{t+1}\sim \PP(\cdot\given s_t,a_t)$ and action $a_{t+1}\sim \pi(\cdot\given s_{t+1})$.
		\ENDFOR		
		\STATE Collect $R(s_T,a_T)$ by $\hat{V}\leftarrow\hat{V}+\gamma^{T/2}\cdot R(s_T,a_T)$.
		\RETURN $\hat{V}$.
		\end{algorithmic}
\end{algorithm} 

Motivated  by the form of policy gradient in \eqref{equ:policy_grad_3}, we propose the following stochastic estimate $\hat{\nabla}J(\theta)$  
\#\label{equ:SGD_eva}
\hat{\nabla}J(\theta)=\frac{1}{1-\gamma}\cdot \hat{Q}_{\pi_\theta}(s_T,a_T)\cdot \nabla\log[\pi_{\theta}(a_T\given s_T)]. 
\#
In addition, we can also estimate the policy gradient using advantage functions as in \eqref{equ:policy_grad_5}, where the  advantage function is estimated by either the difference between the value function and the action-value function, or the temporal difference (TD) error. In particular, we propose the following two  stochastic  policy gradients
\#
\check{\nabla}J(\theta)&=\frac{1}{1-\gamma}\cdot [\hat{Q}_{\pi_\theta}(s_T,a_T)-\hat{V}_{\pi_\theta}(s_T)]\cdot \nabla\log[\pi_{\theta}(a_T\given s_T)],\label{equ:SGD_eva_2}\\
\tilde{\nabla}J(\theta)&=\frac{1}{1-\gamma}\cdot [R(s_T,a_T)+\gamma \hat{V}_{\pi_\theta}(s'_T)-\hat{V}_{\pi_\theta}(s_T)]\cdot \nabla\log[\pi_{\theta}(a_T\given s_T)],\label{equ:SGD_eva_3}
\#
where $\hat{V}_{\pi_\theta}(s)$ is an unbiased estimate of the value function ${V}_{\pi_\theta}(s)$, and $s'_T$ is the next state given state $s_T$ and $a_T$.  
The process of  estimating $\hat{V}_{\pi_\theta}(s)$ employs the same idea as  the \textbf{EstQ} algorithm, where $\hat{V}_{\pi_\theta}(s)$ 
is obtained by collecting the $\gamma^{1/2}$-discounted rewards along the trajectory starting from $s_0$ (instead of a state-action pair $(s,a)$),  following  $a_t\sim \pi_\theta(\cdot\given s_t)$, and of length $T'\sim \text{Geom}(1-\gamma^{1/2})$, i.e., 
$
\hat{V}_{\pi_\theta}(s)=\sum_{t=0}^{T'}\gamma^{1/2}\cdot R(s_t,a_t)\given s_0=s. 
$ 
We refer to this subroutine as \textbf{EstV}, which is summarized in Algorithm \ref{alg:est_V}. The reason for these alternate updates is that the off-set term can be used to reduce the variance of estimating the  policy gradient \cite{greensmith2004variance}.

We then establish in the following theorem, which states  that all the stochastic policy gradients $\hat{\nabla}J(\theta),\check{\nabla}J(\theta)$, and $\tilde{\nabla}J(\theta)$ are unbiased estimates of ${\nabla}J(\theta)$ [cf. \eqref{equ:policy_grad_3}].
{Additionally, we can also establish  the boundedness of $\|\hat{\nabla}J(\theta)\|,\|\check{\nabla}J(\theta)\|$, and $\|\tilde{\nabla}J(\theta)\|$, as well as $\|\nabla J(\theta)\|$ for any $\theta\in\Theta$. 
The proof is deferred to Appendix \ref{apx_thm:unbiased_and_bnd_grad_est}.  

\begin{theorem}[Properties of Stochastic Policy Gradients]\label{thm:unbiased_and_bnd_grad_est}
For any $\theta$, $\hat{\nabla}J(\theta),\check{\nabla}J(\theta)$, and $\tilde{\nabla}J(\theta)$ obtained from  \eqref{equ:SGD_eva}, \eqref{equ:SGD_eva_2}, and \eqref{equ:SGD_eva_3}, respectively, are all unbiased estimates of $\nabla J(\theta)$ in \eqref{equ:policy_grad_3}, i.e., for any $\theta$
	\$
	\mathbb{E}[\hat{\nabla} J(\theta) \given \theta] =\mathbb{E}[\check{\nabla} J(\theta) \given \theta]=\mathbb{E}[\tilde{\nabla} J(\theta) \given \theta]=\nabla J(\theta).
	\$
	where the expectation is with respect to the  random horizon $T'$, the trajectory along $(s_0,a_0,s_1,\cdots,s_{T'},a_{T'})$, and the random sample $(s_T,a_T)$.
	Moreover, the norm of the  policy gradient $\nabla J(\theta)$ is bounded, and its stochastic estimates $\hat{\nabla}J(\theta),\check{\nabla}J(\theta),\tilde{\nabla}J(\theta)$  are all almost surely (a.s.) bounded, i.e., 
	\$
	\|\nabla J(\theta)\|\leq \frac{B_{\Theta}\cdot U_R}{(1-\gamma)^2},\quad \|\hat{\nabla}J(\theta)\|\leq \hat{\ell}\tx{~~a.s.},\quad \|\check{\nabla}J(\theta)\|\leq \check{\ell}\tx{~~a.s.},\quad \|\tilde{\nabla}J(\theta)\|\leq \tilde{\ell}\tx{~~a.s.},
	\$
	for some constants $\hat{\ell},\check{\ell},\tilde{\ell}>0$,  whose values are given in \eqref{equ:def_B_J_1}, \eqref{equ:def_B_J_2}, and \eqref{equ:def_B_J_3} in \S\ref{apx_thm:unbiased_and_bnd_grad_est}.
\end{theorem}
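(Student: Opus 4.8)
The plan is to treat the two claims separately. For unbiasedness I would condition in two stages — first take the expectation over the random rollout horizon inside \textbf{EstQ}/\textbf{EstV}, then over the outer sampling horizon $T$ — and for the norm bounds I would simply push the reward bound $U_R$ through the relevant geometric series. \emph{Step 1 (the subroutines are unbiased).} Fix $(s,a)$, condition on the rollout trajectory $(s_0,a_0,s_1,a_1,\dots)$ with $(s_0,a_0)=(s,a)$, and write $\hat{Q}_{\pi_\theta}(s,a)=\sum_{t\ge 0}\gamma^{t/2}R(s_t,a_t)\,\mathbf{1}(t\le T')$. The key elementary fact is that for $T'\sim\text{Geom}(1-\gamma^{1/2})$ one has $P(T'\ge t)=\gamma^{t/2}$, so this tail probability exactly absorbs the extra $\gamma^{t/2}$ discount in the collected reward and $\EE_{T'}[\hat{Q}_{\pi_\theta}(s,a)\mid\text{traj}]=\sum_{t\ge 0}\gamma^t R(s_t,a_t)$; the interchange of the sum and $\EE_{T'}$ is licensed by $\sum_t\gamma^{t/2}U_R<\infty$ and dominated convergence. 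Taking the remaining expectation over the trajectory gives $\EE[\hat{Q}_{\pi_\theta}(s,a)]=Q_{\pi_\theta}(s,a)$; the identical argument run on a trajectory from $s_0=s$ with $a_0\sim\pi_\theta(\cdot\mid s)$ gives $\EE[\hat{V}_{\pi_\theta}(s)]=V_{\pi_\theta}(s)$.

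\emph{Step 2 (the three gradients are unbiased).} Since $T\sim\text{Geom}(1-\gamma)$ is independent of the trajectory, the law of $(s_T,a_T)$ is $\sum_{t\ge0}(1-\gamma)\gamma^t\,p(s_t=s\mid s_0,\pi_\theta)\,\pi_\theta(a\mid s)=\rho_{\pi_\theta}(s)\pi_\theta(a\mid s)=\rho_\theta(s,a)$. Conditioned on $(s_T,a_T)$, the \textbf{EstQ} rollout is generated with fresh randomness, so by the Markov property $\EE[\hat{Q}_{\pi_\theta}(s_T,a_T)\mid s_T,a_T]=Q_{\pi_\theta}(s_T,a_T)$; substituting into \eqref{equ:SGD_eva} and using the tower rule together with \eqref{equ:policy_grad_3} yields $\EE[\hat{\nabla}J(\theta)\mid\theta]=\nabla J(\theta)$. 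For $\check{\nabla}J(\theta)$, the \textbf{EstQ} and \textbf{EstV} rollouts are independent given $(s_T,a_T)$, so the bracket has conditional mean $Q_{\pi_\theta}(s_T,a_T)-V_{\pi_\theta}(s_T)=A_{\pi_\theta}(s_T,a_T)$ and \eqref{equ:policy_grad_5} closes it (equivalently, invoke the baseline identity $\EE_{a\sim\pi_\theta(\cdot\mid s)}[\nabla\log\pi_\theta(a\mid s)\,b(s)]=0$). For $\tilde{\nabla}J(\theta)$, drawing $s'_T\sim\PP(\cdot\mid s_T,a_T)$ together with two fresh value estimates and using the Bellman relation $R(s,a)+\gamma\,\EE_{s'}[V_{\pi_\theta}(s')]=Q_{\pi_\theta}(s,a)$ shows the TD bracket also has conditional mean $A_{\pi_\theta}(s_T,a_T)$, reducing once more to \eqref{equ:policy_grad_5}.

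\emph{Step 3 (norm bounds).} From \eqref{equ:policy_grad_3}, Jensen's inequality and Assumption \ref{assum:regularity} — using $|Q_{\pi_\theta}|\le U_R/(1-\gamma)$ — give $\|\nabla J(\theta)\|\le\frac{1}{1-\gamma}\,\EE_{\rho_\theta}\big[\|\nabla\log\pi_\theta(a\mid s)\|\cdot|Q_{\pi_\theta}(s,a)|\big]\le B_\Theta U_R/(1-\gamma)^2$. For the stochastic gradients, the crucial observation is that the $\gamma^{1/2}$-discount makes $\hat{Q}_{\pi_\theta}$ and $\hat{V}_{\pi_\theta}$ bounded \emph{path by path}, not merely in expectation: $|\hat{Q}_{\pi_\theta}(s,a)|\le\sum_{t\ge0}\gamma^{t/2}U_R=U_R/(1-\gamma^{1/2})$ a.s., and likewise for $\hat{V}_{\pi_\theta}$ — this is exactly where the scheme improves on the undiscounted rollout of \cite{santi2018stochastic}. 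Multiplying by $\|\nabla\log\pi_\theta\|\le B_\Theta$ and collecting the terms in \eqref{equ:SGD_eva}, \eqref{equ:SGD_eva_2}, \eqref{equ:SGD_eva_3} produces the a.s. constants $\hat{\ell},\check{\ell},\tilde{\ell}$ recorded in \eqref{equ:def_B_J_1}--\eqref{equ:def_B_J_3}.

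\emph{Main difficulty.} None of the steps is deep; the only genuine care required is measure-theoretic — justifying the two interchanges of infinite summation with expectation (both handled by the uniform reward bound together with dominated convergence), and being explicit that, conditioned on $(s_T,a_T)$, the \textbf{EstQ}/\textbf{EstV} rollouts are generated with fresh randomness that is independent of the horizon $T$ and of the path reaching $(s_T,a_T)$, so that the Markov property applies verbatim.
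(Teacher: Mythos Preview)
Your proposal is correct and follows essentially the same approach as the paper: both condition in two stages (inner rollout horizon, then outer sampling horizon), use the tail-probability identity $P(T'\ge t)=\gamma^{t/2}$ to cancel the extra discount, and obtain the a.s.\ bounds by pushing $U_R$ through the geometric series $\sum_{t\ge 0}\gamma^{t/2}$. The only cosmetic difference is that the paper invokes the Monotone Convergence Theorem for the first interchange where you cite dominated convergence, but either is valid here.
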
 
}

Henceforth in this section and the next, we will mainly focus on the convergence analysis for the RPG algorithm with the stochastic gradient $\hat{\nabla}J(\theta)$ as defined in \eqref{equ:SGD_eva}. The RPG algorithms with  $\check{\nabla} J(\theta)$ and $\tilde{\nabla} J(\theta)$ will be discussed later in \S\ref{sec:conv_second_stationary_point}, where reducing the variance of RPG is of greater interest.

\begin{algorithm}[t]
	\caption{~\textbf{RPG:}~Random-horizon  Policy Gradient Algorithm}\label{alg:RPG}
	\centering
	\begin{algorithmic}
		\STATE \textbf{Input:} $s_0$ and $\theta_0$, initialize $k\leftarrow 0$.
		\STATE \textbf{Repeat:}
		\bindent
		\STATE Draw $T_{k+1}$ from the geometric distribution $\text{Geom}(1-\gamma)$.
		\STATE Draw $a_0\sim \pi_{\theta_k}(\cdot\given s_0)$
		\FORALL{$t=0,\cdots,T_{k+1}-1$}
			\STATE Simulate the next state $s_{t+1}\sim \PP(\cdot\given s_t,a_t)$ and action $a_{t+1}\sim \pi_{\theta_k}(\cdot\given s_{t+1})$.
		\ENDFOR		
		\STATE Obtain an estimate of $Q_{\pi_{\theta_k}}(s_{T_{k+1}},a_{T_{k+1}})$ by Algorithm \ref{alg:est_Q}, i.e., 
		\$
		\hat{Q}_{\pi_{\theta_k}}(s_{T_{k+1}},a_{T_{k+1}})\leftarrow\textbf{EstQ}(s_{T_{k+1}},a_{T_{k+1}},\theta_k).
		\$
		\STATE Perform  policy gradient update 
		\$
		\theta_{k+1}\leftarrow\theta_k+\frac{\alpha_k}{1-\gamma}\cdot \hat{Q}_{\pi_{\theta_k}}(s_{T_{k+1}},a_{T_{k+1}})\cdot \nabla\log[\pi_{\theta_k}(a_{T	_{k+1}}\given s_{T_{k+1}})]
		\$
		\STATE Update the iteration counter $k\leftarrow k+1$.
		\eindent  
		\STATE \textbf{Until Convergence}
		\end{algorithmic}
\end{algorithm}

To this end, let $k$ be the iteration index  and $\theta_k$ be the  associated estimate for the policy  parameter. 
Under Theorem \ref{thm:unbiased_and_bnd_grad_est}, the  policy gradient update for step $k+1$ is 
\#\label{equ:RPG_update}
\theta_{k+1}=\theta_k+{\alpha_k}\hat{\nabla}J(\theta_k)=\theta_k+\frac{\alpha_k}{1-\gamma}\cdot \hat{Q}_{\pi_{\theta_k}}(s_{T_{k+1}},a_{T_{k+1}})\cdot \nabla\log[\pi_{\theta_k}(a_{T_{k+1}}\given s_{T_{k+1}})],
\#
where $\{\alpha_k\}$ is the stepsize sequence  that can be either diminishing or constant, and $\{T_k\}$ are drawn i.i.d. from $ 
\text{Geom}(1-\gamma)$. The details of the  policy gradient method, which we refer to as the \emph{random-horizon  policy gradient} algorithm, are summarized in Algorithm \ref{alg:RPG}. 
Note that the estimate of $\hat{Q}_{\pi_{\theta_k}}(s_{T_{k+1}},a_{T_{k+1}})$,  i.e., Algorithm \ref{alg:est_Q}, is conducted in the  \emph{inner-loop} of the stochastic  policy gradient update.

\begin{remark} \label{remark:actor_critic}
We note that in order to estimate the Q-function, it is not very sample-efficient to use Monte-Carlo rollouts to sample  states, actions, and rewards.  
In fact, there exist some methods that can estimate the Q-function in parallel with the policy gradient update, which is usually referred to as \emph{actor-critic} method \cite{konda2000actor,bhatnagar2009natural}. This online policy evaluation update is generally performed via \emph{bootstrapping} algorithms such as temporal difference  learning  \cite{dann2014policy}, which will introduce biases into  the Q-function estimate, and thus the policy gradient estimate. In addition, such  policy evaluation updates in concurrence with the policy improvement will inevitably cause correlation between consecutive stochastic policy gradients. Analyzing the non-asymptotic  convergence performance of such \emph{biased} RPG with \emph{correlated noise} is still open and challenging, which is left as a future research  direction. 
%
\end{remark}

In the next sections, we shift focus to analyzing the theoretical properties of the aforementioned policy learning methods, establishing their asymptotic and finite-time performances, as well as stepsize strategies designed to mitigate the challenges of non-convexity when certain reward structure is present.

\section{Convergence to  Stationary Points}\label{sec:conv_stationary_point}
In this section, we provide convergence analyses for the policy gradient algorithms proposed in \S\ref{sec:alg}.  
We start with the following assumption for the diminishing stepsize $\alpha_k$, which is standard in stochastic approximation.
 
\begin{assumption}\label{assum:RM_Cond}
	The sequence of stepsize $\{\alpha_k\}_{k\geq 0}$   satisfies the  Robbins-Monro condition
\$
\sum_{k=0}^\infty \alpha_k=\infty,\quad\sum_{k=0}^\infty\alpha_k^2<\infty.
\$
\end{assumption}

%
We first establish the  convergence of Algorithm \ref{alg:RPG} in the following theorem under the aforementioned technical conditions.

\begin{theorem}[Asymptotic Convergence of Algorithm \ref{alg:RPG}]\label{thm:alg1_as_conv}
	Let $\{\theta_k\}_{k\geq 0}$ be the sequence of parameters of the policy  $\pi_{\theta_k}$ given by Algorithm \ref{alg:RPG}. Then under Assumptions \ref{assum:regularity} and  \ref{assum:RM_Cond}, we have $\lim_{k\to\infty}\theta_k\in\Theta^*$, where $\Theta^*$ is  the set of stationary points of $J(\theta)$.
\end{theorem}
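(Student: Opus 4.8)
# Proof Proposal for Theorem \ref{thm:alg1_as_conv}

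The plan is to recognize Algorithm \ref{alg:RPG} as a classical stochastic approximation recursion of the Robbins--Monro type, $\theta_{k+1} = \theta_k + \alpha_k \hat{\nabla}J(\theta_k) = \theta_k + \alpha_k\big(\nabla J(\theta_k) + \xi_k\big)$, where $\xi_k := \hat{\nabla}J(\theta_k) - \nabla J(\theta_k)$ is a martingale-difference noise term. By Theorem \ref{thm:unbiased_and_bnd_grad_est} we already have the two ingredients that drive such arguments: unbiasedness, $\EE[\xi_k \given \cF_k] = 0$ where $\cF_k$ is the natural filtration generated by $\theta_0,\dots,\theta_k$; and almost-sure (hence second-moment) boundedness, $\EE[\|\xi_k\|^2 \given \cF_k] \le \hat{\ell}^2 < \infty$. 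Combined with Lemma \ref{lemma:lip_policy_grad} ($\nabla J$ is $L$-Lipschitz) and the boundedness of $J$ (noted right after Assumption \ref{assum:regularity}, $|J(\theta)| \le U_R/(1-\gamma)$), all the standard hypotheses for almost-sure convergence of stochastic gradient ascent on a smooth, bounded-above objective are in place.

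First I would carry out the descent-lemma step: using $L$-smoothness of $-J$,
\$
J(\theta_{k+1}) \ge J(\theta_k) + \alpha_k \langle \nabla J(\theta_k), \hat{\nabla}J(\theta_k)\rangle - \frac{L\alpha_k^2}{2}\|\hat{\nabla}J(\theta_k)\|^2.
\$
Taking conditional expectation given $\cF_k$ and using unbiasedness and the second-moment bound yields
\$
\EE[J(\theta_{k+1}) \given \cF_k] \ge J(\theta_k) + \alpha_k \|\nabla J(\theta_k)\|^2 - \frac{L\hat{\ell}^2}{2}\alpha_k^2.
\$
Since $\sum_k \alpha_k^2 < \infty$ (Assumption \ref{assum:RM_Cond}) and $J$ is bounded above, the Robbins--Siegmund almost-supermartingale convergence theorem applies to $-J(\theta_k)$ (after adding the summable perturbation): it gives that $J(\theta_k)$ converges almost surely to a finite random variable, and simultaneously that $\sum_{k=0}^\infty \alpha_k \|\nabla J(\theta_k)\|^2 < \infty$ almost surely. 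Because $\sum_k \alpha_k = \infty$, this forces $\liminf_k \|\nabla J(\theta_k)\| = 0$ almost surely.

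To upgrade $\liminf = 0$ to the full statement $\lim_k \theta_k \in \Theta^*$, I would invoke the standard ODE / Lyapunov-based stochastic approximation machinery (e.g. Borkar \cite{borkar2008stochastic}, or the Kushner--Clark lemma). The pieces needed are: (i) a bounded-iterate guarantee --- here one should either assume the iterates stay in a compact set (as is implicit in the remarks about $\theta$ lying in a bounded set after Assumption \ref{assum:regularity}, and needed for the uniform bounds of Theorem \ref{thm:unbiased_and_bnd_grad_est} to hold) or argue boundedness from the coercivity of $J$; (ii) $J$ itself serves as a strict Lyapunov function for the limiting ODE $\dot\theta = \nabla J(\theta)$, whose equilibria are exactly $\Theta^*$; (iii) the noise is a martingale difference with bounded variance and the stepsizes satisfy Robbins--Monro. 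Together these yield that the set of limit points of $\{\theta_k\}$ is almost surely a connected, internally chain-transitive invariant set of the ODE contained in $\Theta^*$; combined with convergence of $J(\theta_k)$, one concludes $\theta_k \to \Theta^*$.

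The main obstacle I expect is the iterate-boundedness issue: the clean convergence conclusion $\lim_k\theta_k \in \Theta^*$ requires either an a priori compactness assumption on the parameter set (which the excerpt hints at but does not state as a standing hypothesis for this theorem) or a stability argument showing the recursion does not escape to infinity. If the iterates are unconstrained in $\RR^d$, one must either add a projection step, assume coercivity of $J$, or use a Borkar--Meyn-type stability criterion; absent that, one can only conclude the weaker statement that $\liminf_k\|\nabla J(\theta_k)\| = 0$. I would therefore flag this and, following the convention of the surrounding literature, proceed under the (mild, commonly adopted) assumption that $\{\theta_k\}$ remains in a compact set almost surely, which makes the ODE argument and all the uniform bounds of Theorem \ref{thm:unbiased_and_bnd_grad_est} legitimate.
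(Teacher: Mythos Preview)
Your argument through the Robbins--Siegmund step is essentially what the paper does: it builds the auxiliary sequence $W_k = J(\theta_k) - L\hat\ell^2\sum_{j\ge k}\alpha_j^2$, shows it is a bounded submartingale via exactly the descent-lemma computation you wrote, and applies the supermartingale convergence theorem to obtain $\sum_k \alpha_k\|\nabla J(\theta_k)\|^2 < \infty$ a.s.\ and hence $\liminf_k\|\nabla J(\theta_k)\| = 0$.

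Where you diverge is the upgrade from $\liminf$ to $\lim$. You propose the ODE/Kushner--Clark route and (correctly) flag that it needs an a priori bounded-iterates hypothesis. The paper instead avoids the ODE machinery entirely and runs a direct level-crossing contradiction: assuming $\limsup_k\|\nabla J(\theta_k)\| = \epsilon > 0$, it extracts subsequences that oscillate between the regions $\{\|\nabla J\|\ge 2\epsilon/3\}$ and $\{\|\nabla J\|\le \epsilon/3\}$, uses $\sum_k\alpha_k\|\nabla J(\theta_k)\|^2<\infty$ to show the stepsizes along these crossings are summable, and then uses the a.s.\ bound $\|\hat\nabla J(\theta_k)\|\le\hat\ell$ to conclude the total path length along the crossings is finite---contradicting the fact that each crossing must cover a fixed positive distance. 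The paper explicitly contrasts this ``optimization perspective'' with the ODE method you suggest. What the paper's route buys is self-containment: no invariant-set or chain-transitivity machinery, and no explicit stability/compactness hypothesis on $\{\theta_k\}$ needs to be invoked. What your route buys is generality and modularity---once bounded iterates are granted, the Borkar framework handles everything uniformly---but the extra hypothesis you flagged is genuinely needed there, whereas the paper's crossing argument sidesteps it (modulo its own implicit use of positive separation between the two level regions).
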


Theorem \ref{thm:alg1_as_conv}, whose proof is in Appendix \ref{append:proof:thm:alg1_as_conv}, shows that the random-horizon policy gradient update converges to the (first-order) stationary points of $J(\theta)$ almost surely.  
The proof of the theorem is relegated to \S\ref{append:proof:thm:alg1_as_conv}. 
We note that the asymptotic convergence result here is  established from an optimization perspective using supermartingale convergence theorem \cite{robbins1985convergence}, which differs from the existing techniques that show convergence of actor-critic algorithms from dynamical systems theory (or ODE method) \cite{borkar2008stochastic}. Such optimization perspective can be leveraged thanks to the unbiasedness of the stochastic  policy gradients obtained from Algorithm \ref{alg:RPG}.

An additional virtue of this style of analysis is that we can also establish convergence rate  of the policy gradient algorithm without the need for sophisticated concentration inequalities. 
In contrast, the {finite-iteration} analysis for actor-critic algorithms is known to be quite challenging \cite{dalal2017finite,yang18cdc}.  
By convention, we choose the stepsize to be either $
\alpha_k=k^{-a}$ for some parameter $a\in(0,1)$ or constant $\alpha>0$. Note that for the diminishing stepsize,  here we allow a more general choice  than that in Assumption \ref{assum:RM_Cond}. Since   $J(\theta)$ is generally nonconvex,  we consider the convergence rate in terms of a metric of nonstationarity, i.e., the  norm of the gradient $\|\nabla J(\theta_k)\|$.
We  then provide the convergence rates of Algorithm \ref{alg:RPG} for the setting of using  diminishing and constant stepsizes in the  following theorem  and corollary, respectively. 
The proofs of the results are given in \S\ref{apx_thm:alg1_conv_rate}. 

\begin{theorem}[Convergence Rate of  Algorithm \ref{alg:RPG} with Diminishing Stepsize]\label{thm:alg1_conv_rate}
	Let $\{\theta_k\}_{k\geq 0}$ be the sequence of parameters of the policy  $\pi_{\theta_k}$ given by Algorithm \ref{alg:RPG}. Let the stepsize  be $\alpha_k=k^{-a}$ where $a\in(0,1)$. Let 
	\$
	K_\epsilon=\min\big\{k:\inf_{0\leq m\leq k}\EE\|\nabla J(\theta_m)\|^2\leq \epsilon \big\}.
	\$
	Then, under Assumption \ref{assum:regularity}, we have $K_\epsilon\leq O(\epsilon^{-1/p})$, where $p$ is defined as $p=\min\{1-a,a\}$. 
	By optimizing the complexity bound over $a$, we obtain $K_\epsilon\leq O(\epsilon^{-2})$ with $a=1/2$.
\end{theorem}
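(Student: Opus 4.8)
The plan is to run the standard descent-lemma argument for stochastic gradient ascent on a smooth nonconvex objective, using the unbiasedness and bounded-second-moment properties of $\hat{\nabla}J$ established in Theorem~\ref{thm:unbiased_and_bnd_grad_est} together with the $L$-smoothness of $J$ from Lemma~\ref{lemma:lip_policy_grad}. First I would apply the smoothness inequality to consecutive iterates: since $\nabla J$ is $L$-Lipschitz, $J(\theta_{k+1}) \geq J(\theta_k) + \langle \nabla J(\theta_k), \theta_{k+1}-\theta_k\rangle - \frac{L}{2}\|\theta_{k+1}-\theta_k\|^2$. Substituting $\theta_{k+1}-\theta_k = \alpha_k \hat{\nabla}J(\theta_k)$ and taking conditional expectation given $\theta_k$, unbiasedness turns the inner-product term into $\alpha_k\|\nabla J(\theta_k)\|^2$, while the bounded second moment $\EE\|\hat{\nabla}J(\theta_k)\|^2 \leq \hat{\ell}^2$ controls the quadratic term, yielding $\EE[J(\theta_{k+1})\mid\theta_k] \geq J(\theta_k) + \alpha_k\|\nabla J(\theta_k)\|^2 - \frac{L\hat{\ell}^2}{2}\alpha_k^2$.

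Next I would take full expectations, rearrange to $\alpha_k\EE\|\nabla J(\theta_k)\|^2 \leq \EE J(\theta_{k+1}) - \EE J(\theta_k) + \frac{L\hat{\ell}^2}{2}\alpha_k^2$, and sum over $k=0,\dots,K$. The telescoping sum on the right is bounded by $J^* - J(\theta_0) + \frac{L\hat{\ell}^2}{2}\sum_{k=0}^K \alpha_k^2$, where $J^* \leq U_R/(1-\gamma)$ is the (finite) supremum of $J$; call this bound $C_0 + \frac{L\hat{\ell}^2}{2}\sum_{k=0}^K \alpha_k^2$. With $\alpha_k = k^{-a}$ and $a\in(0,1)$, standard integral estimates give $\sum_{k=1}^K \alpha_k^2 \leq O(K^{1-2a})$ when $a<1/2$ and $O(1)$ (in fact $O(\log K)$ at the boundary, but bounded for $a>1/2$) otherwise, and $\sum_{k=1}^K \alpha_k \geq O(K^{1-a})$. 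Since $\inf_{0\leq m\leq K}\EE\|\nabla J(\theta_m)\|^2 \leq \big(\sum_{k=0}^K \alpha_k\big)^{-1}\sum_{k=0}^K \alpha_k\EE\|\nabla J(\theta_k)\|^2$, dividing through gives $\inf_{0\leq m\leq K}\EE\|\nabla J(\theta_m)\|^2 \leq O\big(K^{-(1-a)}\big) + O\big(K^{-(1-a)}\cdot K^{\max\{1-2a,0\}}\big) = O\big(K^{-p}\big)$ with $p = \min\{1-a,\,a\}$ (the two regimes $a\le 1/2$ and $a\ge 1/2$ giving exponents $a$ and $1-a$ respectively). Setting this $\leq \epsilon$ and solving for $K$ yields $K_\epsilon \leq O(\epsilon^{-1/p})$, and maximizing $p$ over $a\in(0,1)$ gives $p=1/2$ at $a=1/2$, hence $K_\epsilon \leq O(\epsilon^{-2})$.

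The routine-but-careful part is the handling of the harmonic-type sums $\sum \alpha_k$ and $\sum \alpha_k^2$ across the two ranges of $a$ (and the logarithmic boundary case $a=1/2$, which is absorbed into the $O$-notation since one can always pick $a$ slightly away from $1/2$ or note $\log K = o(K^\delta)$), and tracking that the objective is bounded above so the telescoping term $J^* - J(\theta_0)$ is a genuine constant — this is exactly where Assumption~\ref{assum:regularity}\ref{as:bounded_reward} enters. I do not anticipate a deep obstacle here; the only subtlety worth flagging is that the "$\inf$ over iterates" formulation (rather than a last-iterate or uniformly-sampled-iterate bound) is what makes the weighted-average argument clean, and one must be slightly careful that the weights $\alpha_k/\sum_j\alpha_j$ sum to one so that the infimum is genuinely dominated by the weighted average. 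No concentration inequalities are needed — this is the advantage of the optimization-style analysis emphasized in the text, and it is a direct consequence of $\hat{\nabla}J$ being an honest unbiased estimator with bounded variance.
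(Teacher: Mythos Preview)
Your proof is correct and rests on the same core ingredients as the paper: the $L$-smoothness descent inequality, unbiasedness of $\hat{\nabla}J$, the bound $\EE\|\hat{\nabla}J(\theta_k)\|^2\leq\hat{\ell}^2$, telescoping, and elementary estimates on $\sum k^{-a}$ and $\sum k^{-2a}$.

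The one execution difference worth noting is in the averaging step. You keep the natural $\alpha_k$-weights, telescope $\sum_k\alpha_k\EE\|\nabla J(\theta_k)\|^2\leq J^*-J(\theta_0)+\tfrac{L\hat{\ell}^2}{2}\sum_k\alpha_k^2$, and then bound the infimum by the $\alpha_k$-weighted average. The paper instead divides each one-step inequality by $\alpha_m$ and applies an Abel (summation-by-parts) argument to the resulting $\tfrac{1}{\alpha_m}$-weighted telescoping sum, using the uniform bound $0\leq J^*-J(\theta)\leq 2U_R/(1-\gamma)$ at every index; this yields a bound on the \emph{unweighted} running average $\tfrac{1}{k}\sum_{m=1}^k\EE\|\nabla J(\theta_m)\|^2\leq O(k^{-p})$. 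Your route is more direct and avoids the summation-by-parts manipulation; the paper's route gives a slightly stronger intermediate statement (control of the uniform Ces\`aro average rather than only the $\alpha_k$-weighted one), but for the purpose of bounding $K_\epsilon$ via the infimum both arrive at the same $O(K^{-p})$ rate and hence the same $K_\epsilon\leq O(\epsilon^{-1/p})$ conclusion.
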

%

\begin{corollary}[Convergence Rate of  Algorithm \ref{alg:RPG} with Constant Stepsize]\label{coro:alg1_conv_rate_const_step}
	Let $\{\theta_k\}_{k\geq 0}$ be the sequence of parameters of the policy  $\pi_{\theta_k}$ given by Algorithm \ref{alg:RPG}. Let the stepsize  be $\alpha_k=\alpha>0$.  Then, under Assumption \ref{assum:regularity}, we have  
	\$
	\frac{1}{k}\sum_{m=1}^k\EE\|\nabla J(\theta_m)\|^2\leq O(\alpha L \hat{\ell}^2),
	\$
	where recall that $L$ is the Lipchitz constant of the policy gradient as defined in \eqref{equ:L_Theta_def} in  Lemma \ref{lemma:lip_policy_grad}. 
\end{corollary}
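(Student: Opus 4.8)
The plan is to run the standard descent-lemma argument for stochastic gradient ascent on an $L$-smooth nonconvex objective, exploiting the conditional unbiasedness and the almost-sure (hence second-moment) boundedness of $\hat{\nabla}J$ established in Theorem~\ref{thm:unbiased_and_bnd_grad_est}. Fix the natural filtration $\{\mathcal{F}_k\}$ generated by $\theta_0,\dots,\theta_k$ together with all sampling randomness through iteration $k-1$, so that $\theta_k$ is $\mathcal{F}_k$-measurable while the horizons $T_{k+1},T'$, the rollout trajectory, and the resulting sample $(s_{T_{k+1}},a_{T_{k+1}})$ drawn at step $k$ are independent of $\mathcal{F}_k$. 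By Lemma~\ref{lemma:lip_policy_grad}, $\nabla J$ is $L$-Lipschitz, so the ascent form of the descent lemma gives
\[
J(\theta_{k+1}) \ge J(\theta_k) + \langle \nabla J(\theta_k),\, \theta_{k+1}-\theta_k\rangle - \frac{L}{2}\|\theta_{k+1}-\theta_k\|^2 .
\]
Substituting the RPG update $\theta_{k+1}-\theta_k = \alpha\,\hat{\nabla}J(\theta_k)$ and taking $\EE[\,\cdot\mid\mathcal{F}_k]$, the cross term becomes $\alpha\langle\nabla J(\theta_k),\EE[\hat{\nabla}J(\theta_k)\mid\mathcal{F}_k]\rangle = \alpha\|\nabla J(\theta_k)\|^2$ by unbiasedness (Theorem~\ref{thm:unbiased_and_bnd_grad_est}), and the quadratic term is controlled via $\EE[\|\hat{\nabla}J(\theta_k)\|^2\mid\mathcal{F}_k]\le\hat{\ell}^2$, which follows from the a.s.\ bound $\|\hat{\nabla}J(\theta_k)\|\le\hat{\ell}$. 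This yields the one-step recursion $\EE[J(\theta_{k+1})\mid\mathcal{F}_k] \ge J(\theta_k) + \alpha\|\nabla J(\theta_k)\|^2 - \tfrac{L\alpha^2}{2}\hat{\ell}^2$.

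Next I would rearrange to $\alpha\|\nabla J(\theta_k)\|^2 \le \EE[J(\theta_{k+1})\mid\mathcal{F}_k] - J(\theta_k) + \tfrac{L\alpha^2}{2}\hat{\ell}^2$, take full expectations, and sum over $m=1,\dots,k$. The right-hand side telescopes to $\EE[J(\theta_{k+1})] - \EE[J(\theta_1)] + \tfrac{kL\alpha^2}{2}\hat{\ell}^2$, and since $|J(\theta)|\le U_R/(1-\gamma)$ uniformly (as recorded right after Assumption~\ref{assum:regularity}), the telescoped difference is at most $2U_R/(1-\gamma)$. Dividing through by $\alpha k$ gives
\[
\frac{1}{k}\sum_{m=1}^k \EE\|\nabla J(\theta_m)\|^2 \;\le\; \frac{2U_R}{\alpha k(1-\gamma)} + \frac{L\alpha}{2}\hat{\ell}^2 .
\]
As $k\to\infty$ the first term vanishes, leaving the advertised $O(\alpha L\hat{\ell}^2)$ steady-state bound; equivalently, once $k$ exceeds a constant multiple of $U_R/[(1-\gamma)\alpha^2 L\hat{\ell}^2]$ both terms are of order $\alpha L\hat{\ell}^2$, which is the sense in which the stated inequality should be read.

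There is no deep obstacle here: the argument is essentially the textbook constant-stepsize SGD bound, and every ingredient it needs ($L$-smoothness, conditional unbiasedness, bounded second moment, bounded objective) is already in hand from the earlier results. The only points that require care are bookkeeping ones: (i) setting up the filtration so that the fresh randomness used to form $\hat{\nabla}J(\theta_k)$ is genuinely independent of $\mathcal{F}_k$, so that Theorem~\ref{thm:unbiased_and_bnd_grad_est} applies verbatim at each step and there is no hidden bias from the inner-loop Q-estimation; and (ii) being explicit that, unlike the diminishing-stepsize case of Theorem~\ref{thm:alg1_conv_rate}, a constant stepsize only drives the running-average squared gradient norm into an $O(\alpha L\hat{\ell}^2)$ neighborhood of stationarity rather than to zero, which is precisely why the conclusion is phrased as an averaged bound and why the leading constant is proportional to $\alpha L\hat{\ell}^2$.
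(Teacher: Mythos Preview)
Your proof is correct and follows essentially the same route as the paper: apply the descent lemma for the $L$-smooth objective (the paper records this as the stochastic ascent property \eqref{equ:ascent_lemma} in Lemma~\ref{lemma:submartingale}), take conditional expectations using unbiasedness and the a.s.\ bound $\|\hat{\nabla}J(\theta_k)\|\le\hat{\ell}$, rearrange and telescope, then bound the objective gap via $|J(\theta)|\le U_R/(1-\gamma)$ to obtain exactly the displayed inequality \eqref{equ:conv_const_step_im}. The only cosmetic differences are that the paper carries the constant $L\alpha^2\hat{\ell}^2$ rather than $\tfrac{L}{2}\alpha^2\hat{\ell}^2$ and phrases the telescoping via the sub-optimality $U(\theta)=J^*-J(\theta)$.
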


Theorem \ref{thm:alg1_conv_rate} illustrates that 
when diminishing stepsize is adopted, which essentially establishes  a $1/\sqrt{k}$ convergence rate for the convergence of the expected gradient norm square $\|\nabla J(\theta_k)\|^2$.
Corollary \ref{coro:alg1_conv_rate_const_step} shows that the average of the  gradient norm square will converge to a neighborhood around zero with the rate of $1/k$. The size of the neighborhood  is controlled by the stepsize $\alpha$. Moreover, \eqref{equ:conv_const_step_im} also implies that a smaller stepsize may decrease the size of the neighborhood, at the expense of the convergence speed.
We note that both results are standard and recover the convergence properties of stochastic gradient descent for nonconvex optimization problems \cite{shapiro2009lectures,ghadimi2013stochastic}. In the next  section, we propose modified stepsize rules, which under an appropriate hypothesis on the policy parameterization and reward structure of the problem, yield stronger limiting policies.

\section{Convergence to Second-Order Stationary Points}\label{sec:conv_second_stationary_point}
In this section, we provide convergence analyses for several modified policy gradient algorithms based on Algorithm \ref{alg:RPG}, which may escape saddle points and thus converge to the approximate  second-order stationary points of the problem.  In short, we propose a custom periodically enlarged stepsize rule, which under an additional hypothesis on the incentive structure of the problem and some other standard conditions (see \S\ref{sec:MRPG}), allow us to attain improved limiting policy parameters (see \S\ref{sec:MRPG_Conv}).

We start with the definition of \edit{(approximate)} second-order stationary points \cite{nesterov2006cubic}\footnote{Note that Definition \ref{def:2nd_stationary_point} is based on the maximization problem  we consider here, which is slightly different from the definition for minimization problems where $\lambda_{\max}[\nabla^2 J(\theta)]\leq \epsilon_h$ is replaced by $\lambda_{\min}[\nabla^2 J(\theta)]\geq -\epsilon_h$.}. 
 
\begin{definition}\label{def:2nd_stationary_point}
	An $(\epsilon_g,\epsilon_h)$-\edit{approximate-}second-order stationary point $\theta$ is defined as
	\$
	\|\nabla J(\theta)\|\leq \epsilon_g,\qquad \lambda_{\max}[\nabla^2 J(\theta)]\leq \epsilon_h. 
	\$
	\edit{If $\epsilon_g=\epsilon_h=0$, the point $\theta$ is a second-order stationary point. }
\end{definition}

The intuition for this definition is that a local maximum is one in which the gradient is null and the Hessian is negative semidefinite. When we relax the first criterion, we obtain the first inequality, whereas when we relax the second one, we mean that the Hessian is near negative semidefinite.

With the further assumption that all saddle points are strict (i.e., for any saddle point $\theta$, $\lambda_{\max}[\nabla^2 J(\theta)]>0$) \edit{\cite{jin2017escape,ge2015escaping}}, all second-order stationary points ($\epsilon_g=\epsilon_h=0$) are local maxima. In this case, converging to \edit{(approximate)} second-order
stationary points is equivalent to converging to \edit{approximate} local minima, which is usually more desirable  than converging to (first-order) stationary  points.

\subsection{Algorithm}\label{sec:MRPG}
The modified RPG (MRPG) algorithms are built upon the RPG algorithm (Algorithm \ref{alg:RPG}) discussed in Section \ref{sec:alg}. These modifications can  yield escape from saddle points under certain conditions, and hence convergence to approximate local extrema.

In order to reduce the variance of the RPG update \eqref{equ:RPG_update}, we employ the stochastic gradients $\check{\nabla}J(\theta)$ and $\tilde{\nabla}J(\theta)$ as defined in \eqref{equ:SGD_eva_2} and \eqref{equ:SGD_eva_3}, respectively. Note that  the evaluations of both  $\check{\nabla}J(\theta)$ and $\tilde{\nabla}J(\theta)$ need  to estimate the state-value function $\hat{V}_{\pi_\theta}(s)$ for any given $\theta$ and $s$. 
Built upon the subroutines \textbf{EstQ} and \textbf{EstV}, we summarize the subroutine for calculating all three types of  stochastic policy gradients as \textbf{EvalPG} in Algorithm \ref{alg:EvalPG}.

In order to converge to the approximate  second-order stationary points, we modify the RPG algorithm, i.e., Algorithm \ref{alg:RPG}, by \emph{periodically enlarging} the \emph{constant}  stepsize of   the update, once  every $k_{\tx{thre}}$ steps. The larger stepsize can amplify the variance along the eigenvector  corresponding to the largest eigenvalue of the Hessian, which provides a direction for the update to escape at the saddle points.  This idea was first introduced in \cite{daneshmand2018escaping}  for general stochastic gradient methods, and is outlined in Algorithm \ref{alg:MRPG}. 
Note that  $\alpha$ and $\beta$ are the constant stepsizes with $\beta>\alpha>0$, whose values will be given in  \S\ref{sec:MRPG_Conv}  to obtain certain convergence rates. To design this behavior while avoiding unnecessarily large variance, we propose updates that make use of the advantage function, i.e., $\check{\nabla}J(\theta)$ and $\tilde{\nabla}J(\theta)$. The resulting algorithm, with periodically enlarged stepsizes, and stochastic policy gradients that use advantage functions, is summarized as Algorithm \ref{alg:MRPG}. Subsequently, we shift focus to characterizing its policy learning performance analysis.

\begin{algorithm}[t]
	\caption{~\textbf{EvalPG:}~Calculating the Three Types of  Stochastic Policy Gradients}\label{alg:EvalPG}
	\centering
	\begin{algorithmic}
		\STATE \textbf{Input:} $s,a$, $\theta$ and the gradient type $\diamondsuit$. 
		\IF {gradient type $\diamondsuit=\ \hat{} \ $}
		\STATE Obtain an estimate    $
		\hat{Q}_{\pi_{\theta}}(s,a)\leftarrow\textbf{EstQ}(s,a,\theta)$. 
		\STATE Calculate  $\hat{\nabla}J(\theta)$, i.e., let 
		\$
		{g}_\theta\leftarrow\frac{1}{1-\gamma}\cdot \hat{Q}_{\pi_{\theta}}(s,a)\cdot \nabla\log\pi_{\theta}(a\given s).
		\$
		\ELSIF {gradient type $\diamondsuit=\ \check{} \ $}
		\STATE Obtain  estimates    $
		\hat{Q}_{\pi_{\theta}}(s,a)\leftarrow\textbf{EstQ}(s,a,\theta)$ and $ 
		\hat{V}_{\pi_{\theta}}(s)\leftarrow\textbf{EstV}(s,\theta)$. 
		\STATE Calculate  $\check{\nabla}J(\theta)$, i.e., let 
		\$
		{g}_\theta\leftarrow\frac{1}{1-\gamma}\cdot [\hat{Q}_{\pi_{\theta}}(s,a)-\hat{V}_{\pi_{\theta}}(s)]\cdot \nabla\log\pi_{\theta}(a\given s).
		\$
		\ELSIF {gradient type $\diamondsuit=\ \tilde{} \ $}  
		\STATE Simulate the next state:  $s'\sim \PP(\cdot\given s,a)$. 
		\STATE Obtain  estimates    $ 
		\hat{V}_{\pi_{\theta}}(s)\leftarrow\textbf{EstV}(s,\theta)$ and $\hat{V}_{\pi_{\theta}}(s')\leftarrow\textbf{EstV}(s',\theta)$. 
		\STATE Calculate  $\tilde{\nabla}J(\theta)$, i.e., let 
		\$
		{g}_\theta\leftarrow\frac{1}{1-\gamma}\cdot [R(s,a)+\gamma\cdot\hat{V}_{\pi_{\theta}}(s')-\hat{V}_{\pi_{\theta}}(s)]\cdot \nabla\log\pi_{\theta}(a\given s). 
		\$
		\ENDIF 
		\RETURN  Stochastic policy gradient  ${g}_\theta$
		\end{algorithmic}
\end{algorithm} 
 
\begin{algorithm}[th]
	\caption{~\textbf{MRPG:}~Modified Random-horizon  Policy Gradient Algorithm}\label{alg:MRPG}
	\centering
	\begin{algorithmic}
		\STATE \textbf{Input:} $s_0$, $\theta_0$, and the gradient type $\diamondsuit$, initialize $k\leftarrow 0$, return set $\hat{\Theta}^*\leftarrow \emptyset$.
		\STATE \textbf{Repeat:}
		\bindent
		\STATE Draw $T_{k+1}$ from the geometric distribution $\text{Geom}(1-\gamma)$, and draw $a_0\sim \pi_{\theta_k}(\cdot\given s_0)$.
	\FORALL{$t=0,\cdots,T_{k+1}-1$}
			\STATE Simulate the next state $s_{t+1}\sim \PP(\cdot\given s_t,a_t)$ and action $a_{t+1}\sim \pi_{\theta_k}(\cdot\given s_{t+1})$.
		\ENDFOR		
		\STATE Calculate the stochastic  gradient   $g_k\leftarrow\textbf{EvalPG}(s_{T_{k+1}},a_{T_{k+1}},\theta_k,\diamondsuit)$. 
		\IF {$(k\tx{~mod~}k_{\tx{thre}})=0$}
		\STATE 
		\$
		\hat{\Theta}^*\leftarrow \hat{\Theta}^*\cup\{\theta_{k}\}
		\\
		\theta_{k+1}\leftarrow\theta_k+\beta\cdot g_k 
		\$
		\ELSE 
		\STATE
		\$
		\theta_{k+1}\leftarrow\theta_k+\alpha\cdot g_k
		\$
		\ENDIF
		\STATE Update the iteration counter $k=k+1$.
		\eindent  
		\STATE \textbf{Until Convergence}
		\RETURN{ {$\theta$ uniformly at random} from the set $\hat{\Theta}^*$.}
		\end{algorithmic}
\end{algorithm}

\subsection{Convergence Analysis}\label{sec:MRPG_Conv}

In this subsection, we provide {a finite-iteration} convergence result for the modified RPG algorithm, i.e., Algorithm \ref{alg:MRPG}. To this end, we first  
introduce the following condition, built upon Assumption \ref{assum:regularity}, which is  required in the sequel.

\begin{assumption}\label{assum:regularity_for_CNC}
The MDP and the parameterized policy $\pi_\theta$ satisfy the following conditions:
\begin{enumerate}[label=(\roman*)]
	\item The reward $R(s,a)$ is either positive or negative for any $(s,a)\in\cS\times\cA$. Thus, $|R(s,a)|\in[L_R,U_R]$  with some $L_R>0$. \label{as:reward_strict_sign}
	\item The score function $\nabla \log \pi_\theta$ exists, and its norm is   bounded by $\|\nabla \log \pi_\theta\|\leq B_\Theta$ for any $\theta$. 
	Also, the Jacobian of  $\nabla \log \pi_\theta$  has bounded norm and is Lipschitz continuous, i.e., there exist constants $\rho_\Theta>0$ and $L_\Theta<\infty$ such that for any  $(s,a)\in\cS\times\cA$ \label{as:smooth_policy}
	\$
	&\big\|\nabla^2\log \pi_{\theta^1}(a\given s)-\nabla^2\log \pi_{\theta^2}(a\given s)\big\|\leq \rho_\Theta\cdot\|\theta^1-\theta^2\|,\text{~~for any~~} \theta^1,\theta^2,\\
	&\big\|\nabla^2\log \pi_{\theta}(a\given s)\big\|\leq L_\Theta,\text{~~for any~~} \theta.
	\$
	\item The integral of the Fisher information matrix induced by $\pi_\theta(\cdot\given s)$ is positive-definite uniformly  for any  $\theta\in\RR^d$, i.e., there exists a constant $L_I>0$ such that \label{as:fischer}
	\#\label{equ:Fisher_lower_bnd}
	\int_{s\in\cS,a\in\cA}\rho_\theta(s,a)\cdot\nabla\log  \pi_\theta(\cdot\given s)\cdot [\nabla\log \pi_\theta(\cdot\given s)]^\top da ds\succeq  L_I\cdot\bI,~~\text{for~all~~}\theta\in\RR^d.
	\#
\end{enumerate}
\end{assumption}

We note that Assumption \ref{assum:regularity_for_CNC} is indeed standard, and can be readily satisfied in practice. First, the strict positivity (or negativity) of the reward function in Assumption \ref{assum:regularity_for_CNC}\ref{as:reward_strict_sign} can be easily satisfied by adding (or subtracting) an offset to the original non-negative and upper-bounded reward. In fact, it can be justified in the following lemma that adding any offset does not change the optimal policy of the original MDP.

\begin{lemma}\label{lemma:offset_reward_nochange}
	Given any MDP $\cM=(\cS,\cA,\PP,R,\gamma)$, let $\tilde{\cM}$ be a modified MDP of  $\cM$, such that $\cM=(\cS,\cA,\PP,\tilde{R},\gamma)$ and $\tilde{R}(s,a)=R(s,a)+C$, for any $s\in\cS,a\in\cA$, and  $C\in\RR$. Then, the sets of optimal policies for the two  MDPs, $\cM$ and $\tilde{\cM}$, are equal. 
\end{lemma}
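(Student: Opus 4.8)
The plan is to show that adding a constant $C$ to every reward shifts the value function of \emph{every} policy by the same additive constant $C/(1-\gamma)$, which is independent of the policy. Since an additive constant does not change which policy attains the maximum, the two MDPs must share the same set of optimal policies.

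First I would fix an arbitrary stationary policy $\pi$ and compute its value function $\tilde{V}_\pi$ in $\tilde{\cM}$ directly from the definition. Writing out
\$
\tilde{V}_{\pi}(s)=\EE_{a_t\sim \pi(\cdot\given s_t),\,s_{t+1}\sim \PP(\cdot\given s_t,a_t)}\bigg(\sum_{t=0}^\infty \gamma^t \tilde{R}(s_t,a_t)\biggiven s_0=s\bigg),
\$
and substituting $\tilde{R}(s_t,a_t)=R(s_t,a_t)+C$, the sum splits by linearity of expectation into $V_\pi(s)$ plus $\EE\big(\sum_{t=0}^\infty \gamma^t C\big)=C\sum_{t=0}^\infty \gamma^t = C/(1-\gamma)$, where the geometric series converges because $\gamma\in(0,1)$. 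Hence $\tilde{V}_\pi(s) = V_\pi(s) + C/(1-\gamma)$ for every $s\in\cS$ and every policy $\pi$.

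Next, the claimed equality of the optimal-policy sets follows immediately: for the fixed initial state $s_0$ (or, if one prefers the "optimal from every state" notion, for each $s\in\cS$ simultaneously), a policy $\pi^\star$ maximizes $\tilde{V}_{\pi}(s_0)$ over $\pi\in\Pi$ if and only if it maximizes $V_{\pi}(s_0)+C/(1-\gamma)$, which in turn holds if and only if it maximizes $V_{\pi}(s_0)$, since $C/(1-\gamma)$ is a constant that does not depend on $\pi$. Therefore $\arg\max_{\pi\in\Pi}\tilde{V}_\pi(s_0)=\arg\max_{\pi\in\Pi}V_\pi(s_0)$, i.e., the sets of optimal policies for $\cM$ and $\tilde{\cM}$ coincide.

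There is no real obstacle here; the only points worth stating carefully are (i) the interchange of the infinite sum and the expectation, which is justified by absolute summability since $|R|$ is uniformly bounded (Assumption \ref{assum:regularity}\ref{as:bounded_reward}) and $\gamma<1$, so dominated convergence / Tonelli applies, and (ii) being explicit that "optimal policy" is understood with respect to the same objective ($V_\pi(s_0)$, or uniformly over all initial states) for both MDPs, so that the policy-independent shift is all that matters. If one wants to be fully general and allow non-stationary or history-dependent policies in $\Pi$, the same argument goes through verbatim since the shift computation never used stationarity of $\pi$.
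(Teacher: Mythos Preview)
Your argument is correct and self-contained: you compute the per-policy value shift $\tilde V_\pi(s)=V_\pi(s)+C/(1-\gamma)$ directly from the definition and conclude that the $\arg\max$ over policies is unchanged. The paper takes a different route, working instead through the Bellman optimality equation: it observes that $Q_\pi$ satisfies $Q_\pi(s,a)=R(s,a)+\gamma\,\EE_{s'}[\max_{a'}Q_\pi(s',a')]$ if and only if $Q_\pi+C/(1-\gamma)$ satisfies the corresponding equation with $\tilde R=R+C$, so the optimal policies coincide.

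Your approach is more elementary and arguably cleaner here: it needs only linearity of expectation and the geometric series, and it transparently covers any optimality criterion that compares $V_\pi$ across policies (the fixed-$s_0$ objective used in the paper, or uniform optimality over all states), including non-stationary policies as you note. The paper's Bellman-equation argument is shorter to write but implicitly relies on the fixed-point characterization of optimality, which is heavier machinery than the statement requires. Either way the content is the same constant shift $C/(1-\gamma)$; you simply exhibit it at the level of $V_\pi$ rather than at the level of the optimality equation.
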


The proof of the lemma is deferred to Appendix \S\ref{apx_offset}.  
The  positivity (or negativity) of the rewards ensures that the absolute value of the Q-function is also lower-bounded, by the value of $L_R/(1-\gamma)$, which will benefit the convergence of the MRPG algorithm as to be specified shortly. 
Interestingly, such a \emph{reshape} of the reward  function can be shown to yield better convergence results. To our knowledge, our work appears to be the first theoretical study on the effect of reward-reshaping on the convergence property of policy gradient methods, although reward-reshaping is known to be useful for improving learned policies in practice.

On the other hand, 
we  note that such positivity of  $|Q_{\pi_\theta}|$ will cause a relatively large variance in the original RPG update  \eqref{equ:RPG_update}. This makes the RPG with baseline, i.e., the use of $\check{\nabla}J(\theta)$ and $\tilde{\nabla}J(\theta)$, beneficial for variance reduction.

The latter conditions \ref{as:smooth_policy}-\ref{as:fischer} in Assumption \ref{assum:regularity_for_CNC} can also be satisfied easily  by commonly used policies such as Gaussian policies and Gibbs policies. For example, for a Gaussian policy, $\nabla^2\log \pi_{\theta}(a\given s)$ reduces to the matrix $\phi(s)\phi(s)^\top/\sigma^2$, which is a constant function of $\theta$ and thus satisfies    condition   \ref{as:smooth_policy}.  
Such a condition is used  to show the Lipschitz continuity of the Hessian matrix of the objective  function $J(\theta)=V_{\pi_\theta}(s_0)$, which is standard  in establishing the convergence to approximate second-order stationary points in the nonconvex optimization literature \cite{ge2015escaping,jin2017escape,daneshmand2018escaping,xu2017newton}. 
Formally, the Lipschitz continuity of the Hessian is substantiated in the following lemma, whose proof is relegated to Appendix \S\ref{apx_Hessian}.  

\begin{lemma}\label{lemma:Hessian_Lip}
	The Hessian matrix of the objective function  $\cH(\theta)$ is Lipschitz continuous, i.e.,  with some constant $\rho>0$,
	\$
	\big\|\cH(\theta^1)-\cH(\theta^2)\big\|\leq \rho \cdot \|\theta^1-\theta^2\|,~~\text{for~~any}~~\theta^1,\theta^2\in\RR^d.
	\$
	The value of the Lipschitz constant $\rho$ is given in \eqref{equ:value_rho} in \S\ref{apx_Hessian}. 
\end{lemma}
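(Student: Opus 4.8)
\textbf{Proof plan for Lemma \ref{lemma:Hessian_Lip}.}

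The plan is to obtain an explicit formula for the Hessian $\cH(\theta) = \nabla^2 J(\theta)$ by differentiating the policy gradient expression \eqref{equ:policy_grad_3} once more, and then bound the difference $\cH(\theta^1) - \cH(\theta^2)$ term by term using the boundedness and Lipschitz assumptions in Assumption \ref{assum:regularity_for_CNC}\ref{as:smooth_policy}. Starting from $\nabla J(\theta) = \tfrac{1}{1-\gamma}\int \rho_\theta(s,a)\,\nabla\log\pi_\theta(a\given s)\,Q_{\pi_\theta}(s,a)\,ds\,da$, differentiating under the integral sign produces three groups of terms: (i) one where the gradient hits $\rho_\theta(s,a)$ (equivalently, using the log-derivative trick, this brings down a factor $\nabla\log\rho_\theta$, which decomposes into a sum of score functions along the trajectory and is bounded via a standard geometric-series argument giving an $O(1/(1-\gamma))$-type bound); (ii) one where the gradient hits $\nabla\log\pi_\theta(a\given s)$, producing the Hessian $\nabla^2\log\pi_\theta(a\given s)$, which is bounded by $L_\Theta$ and Lipschitz with constant $\rho_\Theta$; and (iii) one where the gradient hits $Q_{\pi_\theta}(s,a)$, producing $\nabla Q_{\pi_\theta}(s,a)$. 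The term $\nabla Q_{\pi_\theta}$ itself must be controlled: one can write $\nabla Q_{\pi_\theta}(s,a) = \tfrac{1}{1-\gamma}\EE_{(s',a')\sim \rho_\theta^{s,a}}[\nabla\log\pi_\theta(a'\given s')\,Q_{\pi_\theta}(s',a')]$ (the analogue of the policy gradient theorem started from $(s,a)$), which is bounded by $B_\Theta U_R/(1-\gamma)^2$ and, crucially, is itself Lipschitz in $\theta$ — this sub-lemma is proved by the same kind of differentiation-and-bounding argument, or bootstrapped from the Lipschitz continuity of $\nabla J$ already established in Lemma \ref{lemma:lip_policy_grad} together with Lipschitz continuity of $Q_{\pi_\theta}$ and $\rho_\theta$ in $\theta$.

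With the explicit Hessian formula in hand, the second step is the Lipschitz estimate itself. I would write $\cH(\theta^1) - \cH(\theta^2)$ as a telescoping sum over the three term-groups, and for each group use the elementary product-difference inequality $\|A_1 B_1 C_1 - A_2 B_2 C_2\| \le \|A_1 - A_2\|\,\|B_1\|\,\|C_1\| + \|A_2\|\,\|B_1 - B_2\|\,\|C_1\| + \|A_2\|\,\|B_2\|\,\|C_1 - C_2\|$, invoking for each factor either its uniform bound ($B_\Theta$, $L_\Theta$, $U_R/(1-\gamma)$, the occupancy-measure total mass $1$, etc.) or its Lipschitz constant in $\theta$ ($L_\Theta$ for the score, $\rho_\Theta$ for $\nabla^2\log\pi_\theta$, and the Lipschitz constants of $\rho_\theta$, $Q_{\pi_\theta}$, $\nabla Q_{\pi_\theta}$ established along the way). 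The Lipschitz continuity of the occupancy measure $\rho_\theta(s,a)$ in $\theta$ and of $Q_{\pi_\theta}$ in $\theta$ are themselves standard facts that follow from the bounded-score assumption — these can either be cited from existing work (e.g. \cite{pirotta2015policy}) or derived quickly from the series representations. Collecting all the constants yields a finite $\rho$, whose precise value is assembled in \eqref{equ:value_rho}.

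The main obstacle I anticipate is the bookkeeping around $\nabla Q_{\pi_\theta}$ and its Lipschitz continuity: unlike the simple factors $\nabla\log\pi_\theta$ and $\nabla^2\log\pi_\theta$ that are assumed well-behaved directly, $\nabla Q_{\pi_\theta}$ and the $\theta$-derivative of the occupancy measure are \emph{derived} quantities whose regularity must be established before the main estimate closes, and doing so without circularity (not quietly assuming what Lemma \ref{lemma:Hessian_Lip} is proving) requires care — the cleanest route is to establish, as preliminary sub-lemmas, that $\theta \mapsto \rho_\theta(s,a)$, $\theta \mapsto Q_{\pi_\theta}(s,a)$, and $\theta \mapsto \nabla Q_{\pi_\theta}(s,a)$ are all Lipschitz with explicit constants, each proved by a geometric-series bound on the perturbation of the trajectory distribution under a change of $\theta$, and only then assemble the Hessian bound. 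Everything else is routine norm-triangle-inequality calculation, so once the regularity of these intermediate objects is pinned down the rest follows mechanically.
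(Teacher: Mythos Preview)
Your proposal is correct and takes essentially the same approach as the paper: decompose the Hessian into three terms, establish as preliminary sub-lemmas the boundedness and Lipschitz continuity of $Q_{\pi_\theta}$, $\nabla Q_{\pi_\theta}$, and the occupancy measure $\rho_\theta$ via trajectory-perturbation/geometric-series bounds, and then apply a product-difference telescoping argument. The paper cites the explicit Hessian formula from \cite{furmston2016approximate} rather than re-deriving it, and first separates the occupancy measure from the integrand before splitting the latter into its three pieces, but otherwise the structure and the key sub-lemma you correctly anticipate (non-circular Lipschitz control of $\nabla Q_{\pi_\theta}$) match exactly.
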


The third condition \ref{as:fischer} in Assumption \ref{assum:regularity_for_CNC} holds for many \emph{regular} policy parameterizations, and has been assumed in prior works on natural policy gradient \cite{kakade2002natural} and actor-critic algorithms \cite{bhatnagar2009natural}. 
%
We  note that  Assumption \ref{assum:regularity_for_CNC}   implies   Assumption \ref{assum:regularity}. More specifically, the  condition on the   reward function  in Assumption \ref{assum:regularity} only requires boundedness, without further requirement on its positivity/ negativity; the boundedness of the norm $\|\nabla^2\log \pi_{\theta}(a\given s)\|$ in Assumption \ref{assum:regularity_for_CNC} implies the $L_\Theta$-Lipschitz continuity of the score function $\nabla\log \pi_{\theta}(a\given s)$ in Assumption \ref{assum:regularity}. And the condition on the positive-definiteness of the Fisher information matrix is additional. We will show shortly that these  stricter assumptions enable stronger convergence guarantees.


Now we  show that  all the three stochastic policy gradients $\hat{\nabla} J(\theta)$, $\check{\nabla} J(\theta)$, and $\tilde{\nabla} J(\theta)$ satisfy the so-termed correlated negative curvature (CNC) condition \cite{daneshmand2018escaping}, which is crucial in the ensuing analysis. The proof of Lemma \ref{lemma:CNC_verify} is deferred to Appendix \S\ref{apx_cnc}.

\begin{lemma}\label{lemma:CNC_verify}
	Under Assumption   \ref{assum:regularity_for_CNC}, all the three stochastic policy gradients $\hat{\nabla} J(\theta)$, $\check{\nabla} J(\theta)$, and $\tilde{\nabla} J(\theta)$ satisfy the correlated negative curvature 
	 condition, i.e., letting $\vb_\theta$ be the unit-norm eigenvector  corresponding to the maximum  eigenvalue of the Hessian matrix $\cH(\theta)$,  there exist constants $\hat{\eta},\check{\eta},\tilde{\eta}>0$ such that  for any $\theta\in\RR^d$
	\$
	\EE\big\{[\vb_\theta^\top \hat{\nabla}J(\theta)]^2\biggiven \theta\big\}\geq \hat{\eta},\quad \EE\big\{[\vb_\theta^\top \check{\nabla}J(\theta)]^2\biggiven \theta\big\}\geq \check{\eta},\quad \EE\big\{[\vb_\theta^\top \tilde{\nabla}J(\theta)]^2\biggiven \theta\big\}\geq \tilde{\eta}.
	\$
\end{lemma}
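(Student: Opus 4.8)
The plan is to establish a lower bound on $\EE\{[\vb_\theta^\top \hat{\nabla}J(\theta)]^2 \mid \theta\}$ (and analogously for the other two gradients) that is \emph{uniform} in $\theta$, by decomposing the conditional expectation via the tower property and then exploiting the structure of the stochastic policy gradients together with the three conditions in Assumption~\ref{assum:regularity_for_CNC}. The key observation is that $\hat{\nabla}J(\theta) = \frac{1}{1-\gamma}\hat{Q}_{\pi_\theta}(s_T,a_T)\nabla\log\pi_\theta(a_T\mid s_T)$, where the randomness splits into: (i) the sample $(s_T,a_T)\sim\rho_\theta(\cdot,\cdot)$, and (ii) the inner rollout producing $\hat{Q}_{\pi_\theta}(s_T,a_T)$, which is conditionally independent given $(s_T,a_T)$ and has conditional mean $Q_{\pi_\theta}(s_T,a_T)$.

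First I would condition on $(s_T,a_T)$ and write $\EE\{[\vb_\theta^\top\hat{\nabla}J(\theta)]^2\mid\theta\} = \frac{1}{(1-\gamma)^2}\EE_{(s,a)\sim\rho_\theta}\big\{(\vb_\theta^\top\nabla\log\pi_\theta(a\mid s))^2\cdot\EE[\hat{Q}_{\pi_\theta}(s,a)^2\mid s,a]\big\}$. Since $\hat{Q}_{\pi_\theta}(s,a)$ is a sum of $\gamma^{1/2}$-discounted rewards all of the same sign (using Assumption~\ref{assum:regularity_for_CNC}\ref{as:reward_strict_sign}), its square is bounded below — in fact $|\hat{Q}_{\pi_\theta}(s,a)| \geq L_R$ almost surely because at least the first-step reward $R(s_0,a_0)$ (with discount $\gamma^0=1$) is collected and has magnitude at least $L_R$, and all subsequent terms have the same sign, so they cannot cause cancellation. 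Hence $\EE[\hat{Q}_{\pi_\theta}(s,a)^2\mid s,a]\geq L_R^2$. This reduces the problem to lower-bounding $\EE_{(s,a)\sim\rho_\theta}[(\vb_\theta^\top\nabla\log\pi_\theta(a\mid s))^2]$, which equals $\vb_\theta^\top\big(\int\rho_\theta(s,a)\nabla\log\pi_\theta(a\mid s)[\nabla\log\pi_\theta(a\mid s)]^\top ds\,da\big)\vb_\theta \geq \lambda_{\min}$ of the Fisher information integral, which is at least $L_I$ by Assumption~\ref{assum:regularity_for_CNC}\ref{as:fischer} since $\vb_\theta$ is a unit vector. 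Combining, $\hat{\eta} := L_I L_R^2/(1-\gamma)^2$ works.

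For $\check{\nabla}J(\theta)$ and $\tilde{\nabla}J(\theta)$, the same scheme applies but the scalar multiplier is now $\hat{Q}_{\pi_\theta}(s,a)-\hat{V}_{\pi_\theta}(s)$ or the TD error $R(s,a)+\gamma\hat{V}_{\pi_\theta}(s')-\hat{V}_{\pi_\theta}(s)$, whose conditional \emph{mean} given $(s,a)$ is the advantage $A_{\pi_\theta}(s,a)$ or the true TD error, but whose conditional \emph{second moment} needs a positive lower bound. Here I would use $\EE[Z^2\mid s,a]\geq (\EE[Z\mid s,a])^2$ only when the advantage is bounded away from zero, which need not hold; so instead the better route is to lower-bound the second moment directly by isolating the sign-definite leading term of the rollouts — e.g., for the TD error, $R(s,a)$ is a deterministic term of magnitude $\geq L_R$, and $\gamma\hat{V}(s')-\hat{V}(s)$ has mean $\gamma V(s')-V(s)$; one shows the second moment is at least some $c(L_R,\gamma,U_R)>0$. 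The main obstacle is precisely this: verifying that the variance-reduced estimates, which are designed to have \emph{small} magnitude, still possess a strictly positive uniform lower bound on their second moment along $\vb_\theta$ — cancellation between the $\hat Q$ (or reward) term and the baseline $\hat V$ term could in principle make the projected second moment arbitrarily small at some $\theta$, so the argument must carefully track which terms are sign-definite versus mean-zero and use the independence of the separate rollouts (for $\hat Q$ and $\hat V$, or for $\hat V(s)$ and $\hat V(s')$) to prevent destructive correlation; I expect this to require a short but delicate computation bounding $\EE[(\hat Q - \hat V)^2\mid s,a]$ from below using $\mathrm{Var}$ plus mean-squared decomposition and the almost-sure lower bounds on $|\hat Q|$ and $|\hat V|$.
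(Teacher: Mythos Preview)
Your treatment of $\hat{\nabla}J(\theta)$ is correct and essentially identical to the paper's: condition on $(s_T,a_T)$, use the almost-sure bound $|\hat Q_{\pi_\theta}(s,a)|\geq L_R$ (since all discounted rewards have the same sign and the $t=0$ term alone contributes $|R(s_0,a_0)|\geq L_R$), and then apply the Fisher lower bound. Your constant $\hat\eta=L_I L_R^2/(1-\gamma)^2$ differs from the paper's $L_I L_R^2$ only because the paper silently drops the $(1-\gamma)^{-2}$ prefactor, which is harmless.

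For $\check{\nabla}J(\theta)$ and $\tilde{\nabla}J(\theta)$ you have the right skeleton---the decomposition $\EE[(\hat Q-\hat V)^2\mid s,a]=(Q-V)^2+\Var[\hat Q]+\Var[\hat V]$ via independence of the two rollouts is exactly what the paper uses---but you have not identified the mechanism that makes $\Var[\hat Q]$ (and $\Var[\hat V]$) uniformly bounded below. Your suggestion to ``isolate the sign-definite leading term'' does not work: in $\hat Q-\hat V$ or in $R+\gamma\hat V(s')-\hat V(s)$ \emph{all} terms are sign-definite of the same sign, so no single deterministic summand survives cancellation. The ``almost-sure lower bounds on $|\hat Q|$ and $|\hat V|$'' likewise do not deliver a positive lower bound on the second moment of their difference.

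The missing idea is that the strictly positive variance comes from the \emph{random rollout horizon} $T'\sim\text{Geom}(1-\gamma^{1/2})$. Because every reward has the same sign and magnitude at least $L_R$, the (conditional expectation of the) partial sum $\sum_{t=0}^{T'}\gamma^{t/2}R(s_t,a_t)$ is strictly monotone in $T'$, with increments of size at least $L_R\gamma^{T'/2}$. Since $T'$ places positive probability on every nonnegative integer, $\hat Q$ cannot be concentrated; the paper makes this quantitative by locating the index $T_*$ at which the conditional mean straddles $Q_{\pi_\theta}(s,a)$, splitting the sum over $T'$, and ultimately reducing to
\[
\Var[\hat Q_{\pi_\theta}(s,a)]\ \geq\ \gamma^{3/2}\,L_R^2\,\frac{\Var\!\big[\gamma^{(T'+1)/2}\big]}{(1-\gamma^{1/2})^2}\ =\ \frac{L_R^2\,\gamma^3(1-\gamma^{1/2})}{(1-\gamma^{3/2})(1-\gamma)^2}\ >\ 0,
\]
which is uniform in $(s,a)$ and $\theta$. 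The same bound holds for $\Var[\hat V]$, and this is what yields $\check\eta$ and $\tilde\eta$. Once you see that the variance source is $T'$ (not the rewards themselves), the ``delicate computation'' you anticipate becomes straightforward.
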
 

The CNC condition, established in Appendix \ref{apx_cnc}, basically illustrates that the perturbation caused by the stochastic gradient is guaranteed to have variance along the direction with positive curvature, i.e., the escaping direction of the objective \cite{daneshmand2018escaping}. 
Such an escaping direction is dictated by  the eigenvectors associated with the maximum eigenvalue of the Hessian matrix $\cH(\theta)$. 
The CNC condition here can be satisfied thanks to Assumption \ref{assum:regularity_for_CNC}, primarily due to the strict positivity of the absolute value of the reward, and the positive-definiteness of the Fisher information matrix. To be more specific, recall the formula of stochastic policy gradients in \eqref{equ:SGD_eva}-\eqref{equ:SGD_eva_3}, such two conditions ensure: i) the square of the Q-value/advantage function estimates is strictly positive and uniformly lower-bounded; ii) thus the expectation of the outer-product of the stochastic policy gradients is strictly positive-definite, which gives the lower bound in Lemma \ref{lemma:CNC_verify}.  
The argument will be detailed in the proof of the lemma  in Appendix \S\ref{apx_cnc}. 

Now we are ready to lay out the following convergence guarantees of the modified RPG algorithm, i.e., Algorithm \ref{alg:MRPG}. 
The  values of  parameters used in the analysis are  specified in   {Table \ref{table:parameters}.

\begin{table}
\begin{tabular}{|c|c|c|c|c|c|}
\hline
Param.       & Value                                                                                                  & Order         & Constraint                                                                              & Equation                                       & Const.       \\ \hline
$\beta$         & $c_1\epsilon^2/(2\ell^2L)$                                                                             & $\cO(\epsilon^2)$                    & $\leq\epsilon^2/(2\ell^2L)$                                                             & \eqref{equ:large_grad_immed_1p5}  & $c_1=1$        \\ \hline
$\beta$         & ''                                                                                                     & ''                                  & $\leq [J_{\tx{thre}}\delta/(2L\ell^2)]^{1/2}$                                           & \eqref{equ:dec_bnd_near_SP}       & ''             \\ \hline
$\beta$         & ''                                                                                                     & $\cO(\epsilon)$                      & $\leq \eta\lambda^2/(24L\ell^3\rho)$                                                    & \eqref{equ:contra_immed_1}         & ''             \\ \hline
$J_{\tx{thre}}$ & $c_2\eta\epsilon^4/(2\ell^2L)$                                                                         & $\cO(\epsilon^{4})$                  & $\leq \beta \epsilon^2/2$                                                              & \eqref{equ:large_grad_immed_2}    & $c_2=c_1/2$    \\ \hline
$J_{\tx{thre}}$ & ''                                                                                                     & ''                                   & $\leq {\eta\beta\lambda^2}/{(48\ell\rho)}$                                              & \eqref{equ:contra_immed_2}          & ''             \\ \hline
$\alpha$        & $c_1\epsilon^2/(2\ell^2L\sqrt{k_{\tx{thre}}})$                                                                                                     & $\cO(\epsilon^{9/2})$                                  & $\leq \beta/\sqrt{k_{\tx{thre}}}$                                                       & \eqref{equ:large_grad_immed_1p25} &              \\ \hline
$\alpha$        & ''                                                                   & ''                & $\leq c'\eta\beta\lambda^3/(24L\ell^3\rho)$                                             & \eqref{equ:contra_immed_3}         & '' \\ \hline
$k_{\tx{thre}}$ & $c_4\frac{\log[{L\ell_g}/{(\eta\beta\alpha\sqrt{\rho\epsilon})}]}{\alpha(\rho\epsilon)^{1/2}}$ & $\Omega(\epsilon^{-5}\log(1/\epsilon))$ & $\geq {c}\frac{\log[{L\ell_g}/({\eta\beta\alpha\lambda})]}{\alpha\lambda}$ & \eqref{equ:contra_immed_4}         & $c_4=c$        \\ \hline
$K$             & $c_5\frac{[{J^*-J(\theta_0)}]k_{\tx{thre}}}{{\delta J_{\tx{thre}}}}$                                         & $\Omega(\epsilon^{-9}\log(1/\epsilon))$ & $\geq 2\frac{[{J^*-J(\theta_0)}]k_{\tx{thre}}}{{\delta J_{\tx{thre}}}}$                       & \eqref{equ:K_lower_bnd}            & $c_5=2$        \\ \hline
\end{tabular}
\caption{List of parameter values used in the convergence analysis.}
\label{table:parameters}
\end{table}

\begin{theorem}\label{thm:conv_MRPG}
	Under Assumption  \ref{assum:regularity_for_CNC}, Algorithm \ref{alg:MRPG} returns an $(\epsilon,\sqrt{\rho\epsilon})$-approximate {second-order stationary point} policy with probability at least $(1-\delta)$ after
	\#\label{equ:final_res}
	\cO\bigg(\Big(\frac{\rho^{3/2}L\epsilon^{-9}}{\delta\eta}\Big)\log\Big(\frac{\ell_g L}{\epsilon\eta\rho}\Big)\bigg),
	\#
	steps, where $\delta\in(0,1)$,  $\ell_g^2:=2\ell^2+{2B^2_\Theta U^2_R}/{(1-\gamma)^4}$, $B_\Theta,U_R$ are as defined in Assumption \ref{assum:regularity_for_CNC}, $\rho$ is the  Lipschitz constant of the Hessian in Lemma \ref{lemma:Hessian_Lip},  
	$\ell$ and $\eta$ take the values of $\hat{\ell},\check{\ell},\tilde{\ell}$ in Theorem \ref{thm:unbiased_and_bnd_grad_est} and $\hat{\eta},\check{\eta},\tilde{\eta}$ in Lemma \ref{lemma:CNC_verify}, when the stochastic policy gradients $\hat{\nabla}J(\theta),\check{\nabla}J(\theta)$, and $\tilde{\nabla}J(\theta)$ are used, respectively.   
\end{theorem}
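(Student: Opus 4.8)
The plan is to recognize Theorem \ref{thm:conv_MRPG} as an instantiation, in the maximization setting, of the CNC-SGD saddle-escaping framework of Daneshmand et al.\ \cite{daneshmand2018escaping}: all structural ingredients that framework needs for $J(\theta)$ are now in hand, namely the $L$-Lipschitz gradient (Lemma \ref{lemma:lip_policy_grad}), the $\rho$-Lipschitz Hessian $\cH(\theta)$ (Lemma \ref{lemma:Hessian_Lip}), unbiasedness together with the \emph{almost sure} bound $\|g_k\|\le\ell$ on the stochastic policy gradient (Theorem \ref{thm:unbiased_and_bnd_grad_est}), the correlated-negative-curvature bound $\EE[(\vb_{\theta}^\top g_\theta)^2\mid\theta]\ge\eta$ along the top eigenvector $\vb_\theta$ of $\cH(\theta)$ (Lemma \ref{lemma:CNC_verify}), and global boundedness $J(\theta)\le J^{*}:=U_R/(1-\gamma)$. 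Write $\lambda:=\sqrt{\rho\epsilon}$ and partition the iterations of Algorithm \ref{alg:MRPG} into epochs of length $k_{\tx{thre}}$, each starting at an index $k$ that is a multiple of $k_{\tx{thre}}$ with the enlarged step $\theta_{k+1}=\theta_k+\beta g_k$ (at which $\theta_k$ is appended to $\hat\Theta^{*}$) and continuing with $k_{\tx{thre}}-1$ steps of size $\alpha$. The core claim to establish is the dichotomy: for every epoch start $\theta_k$, either $\theta_k$ is an $(\epsilon,\lambda)$-approximate second-order stationary point, or $\EE[J(\theta_{k+k_{\tx{thre}}})-J(\theta_k)]\ge J_{\tx{thre}}$.

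Proving the dichotomy splits into two cases. \emph{(i) Large gradient}, $\|\nabla J(\theta_k)\|\ge\epsilon$: $L$-smoothness gives $\EE[J(\theta_{k+1})\mid\theta_k]\ge J(\theta_k)+\beta\epsilon^2-\tfrac{L\beta^2\ell^2}{2}$ for the enlarged step and $\EE[J(\theta_{k+j+1})\mid\mathcal{F}_{k+j}]\ge J(\theta_{k+j})-\tfrac{L\alpha^2\ell^2}{2}$ for each subsequent step; telescoping over the epoch, using $\alpha=\beta/\sqrt{k_{\tx{thre}}}$ (so $(k_{\tx{thre}}-1)\tfrac{L\alpha^2\ell^2}{2}\le\tfrac{L\beta^2\ell^2}{2}$) and $\beta\le\epsilon^2/(2L\ell^2)$, yields expected ascent at least $\beta\epsilon^2/2\ge J_{\tx{thre}}$ by the choice of $J_{\tx{thre}}$ in Table \ref{table:parameters}. \emph{(ii) Near a saddle}, $\|\nabla J(\theta_k)\|<\epsilon$ but $\lambda_{\max}[\cH(\theta_k)]>\lambda$: writing $y_j:=\langle\vb_{\theta_k},\theta_{k+j}-\theta_k\rangle$, the enlarged step and the CNC bound give $\EE[y_1^2\mid\theta_k]\ge\beta^2\eta$; linearizing $\nabla J$ around $\theta_k$ via $\rho$-Hessian-Lipschitzness and using $\vb_{\theta_k}^\top\cH(\theta_k)=\lambda_{\max}[\cH(\theta_k)]\vb_{\theta_k}^\top$ produces a second-moment recursion $\EE[y_{j+1}^2\mid\mathcal{F}_{k+j}]\ge(1+\alpha\lambda)^2 y_j^2-(\text{errors controlled by }\epsilon,\ \rho\|\theta_{k+j}-\theta_k\|^2,\ \alpha^2\ell^2)$, so that for $k_{\tx{thre}}=\Omega\!\big(\tfrac{1}{\alpha\lambda}\log(\ell_g L/(\eta\beta\alpha\lambda))\big)$ the amplification $(1+\alpha\lambda)^{2(k_{\tx{thre}}-1)}$ dominates every error term and $\EE[y_{k_{\tx{thre}}}^2]\ge c\,\beta^2\eta(1+\alpha\lambda)^{2(k_{\tx{thre}}-1)}$ for an absolute constant $c>0$. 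Plugging the displacement into the second-order expansion $J(\theta_{k+m})-J(\theta_k)\ge\langle\nabla J(\theta_k),\theta_{k+m}-\theta_k\rangle+\tfrac12(\theta_{k+m}-\theta_k)^\top\cH(\theta_k)(\theta_{k+m}-\theta_k)-\tfrac{\rho}{6}\|\theta_{k+m}-\theta_k\|^3$, taking expectations, and bounding the ``orthogonal'' part of the quadratic form and the cubic remainder by the $L$-smoothness growth estimate $\EE\|\theta_{k+m}-\theta_k\|^2\le(1+\alpha L)^{2m}\cdot\cO(\beta^2\ell^2+k_{\tx{thre}}\alpha^2\ell^2)$, one again gets expected ascent at least $J_{\tx{thre}}$; this is exactly what forces the orders $\beta=\Theta(\epsilon^2)$, $\alpha=\Theta(\epsilon^{9/2})$, $J_{\tx{thre}}=\Theta(\eta\epsilon^4)$, $k_{\tx{thre}}=\Theta(\epsilon^{-5}\log(1/\epsilon))$ tabulated in Table \ref{table:parameters}.

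Given the dichotomy, let $\tau$ be the index of the first epoch whose start $\theta_{\tau k_{\tx{thre}}}$ is an $(\epsilon,\lambda)$-approximate SOSP; since membership in that set depends only on $\theta_{\tau k_{\tx{thre}}}$, $\tau$ is a stopping time for the epoch-boundary filtration. The stopped process $M_n:=J(\theta_{(n\wedge\tau)k_{\tx{thre}}})-(n\wedge\tau)J_{\tx{thre}}$ is a submartingale (on $\{\tau>n\}$ the epoch ascent is $\ge J_{\tx{thre}}$, and $M$ is frozen on $\{\tau\le n\}$), so $\EE[M_n]\ge M_0=J(\theta_0)$, hence $J_{\tx{thre}}\,\EE[n\wedge\tau]\le\EE[J(\theta_{(n\wedge\tau)k_{\tx{thre}}})]-J(\theta_0)\le J^{*}-J(\theta_0)$; letting $n\to\infty$ gives $\EE[\tau]\le(J^{*}-J(\theta_0))/J_{\tx{thre}}$, and Markov's inequality shows that after $K=\Theta\!\big((J^{*}-J(\theta_0))k_{\tx{thre}}/(\delta J_{\tx{thre}})\big)$ iterations an $(\epsilon,\lambda)$-approximate SOSP has been appended to $\hat\Theta^{*}$ with probability at least $1-\delta$, so the uniform draw returned by Algorithm \ref{alg:MRPG} meets the guarantee (after adjusting constants). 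Substituting the Table \ref{table:parameters} values of $\beta,\alpha,J_{\tx{thre}},k_{\tx{thre}},K$ collapses the iteration count to $\cO\!\big((\rho^{3/2}L\epsilon^{-9}/(\delta\eta))\log(\ell_g L/(\epsilon\eta\rho))\big)$. The main obstacle is case (ii): keeping the cross terms $\alpha y_j e_j$ in the second-moment recursion from cancelling the curvature-driven amplification, ensuring the small/negative eigendirections of $\cH(\theta_k)$ and the cubic Taylor remainder stay subleading in the quadratic-form lower bound, and matching all of these to a single consistent choice of $(\beta,\alpha,J_{\tx{thre}},k_{\tx{thre}})$ — precisely where the \emph{almost sure} gradient bound of Theorem \ref{thm:unbiased_and_bnd_grad_est} (not merely bounded variance) and the uniform Fisher-information lower bound underpinning the CNC constant $\eta$ (Lemma \ref{lemma:CNC_verify}) are both indispensable.
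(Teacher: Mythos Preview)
Your overall architecture matches the paper's, but two pieces diverge, and the second is a genuine gap.

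For the saddle case (ii), the paper does \emph{not} lower-bound the ascent directly through a second-order Taylor expansion of $J$. It argues by contradiction via an ``improve-or-localize'' device: assuming $\EE[J_{k_{\tx{thre}}}]-J_0\le J_{\tx{thre}}$, one first gets a \emph{polynomial-in-$p$} upper bound on $\EE\|\theta_p-\theta_0\|^2$ for all $p\le k_{\tx{thre}}$; then, decomposing $\theta_{p+1}-\theta_0$ around the quadratic model $\cQ$ and invoking the CNC bound gives an \emph{exponential} lower bound $\EE\|\theta_{p+1}-\theta_0\|^2\ge c\,\eta\beta^2\kappa^{2p}$ with $\kappa=1+\alpha\lambda$. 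Choosing $k_{\tx{thre}}$ as in Table \ref{table:parameters} makes the lower bound overtake the upper, contradicting the no-ascent hypothesis. This sidesteps the delicate step you sketch, namely lower-bounding $(\theta-\theta_0)^\top\cH_0(\theta-\theta_0)$ by $\lambda y^2$ minus ``orthogonal'' terms: components of $\theta-\theta_0$ along negative eigendirections of $\cH_0$ drag the quadratic form down, and your proposed control $\EE\|\theta_{k+m}-\theta_k\|^2\le(1+\alpha L)^{2m}\cdot(\cdots)$ grows at least as fast as $(1+\alpha\lambda)^{2m}$ since $\lambda\le L$, so it cannot by itself subordinate those terms or the cubic remainder. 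The paper's distance-contradiction route needs only $\|\theta-\theta_0\|^2\ge y^2$, which requires no sign control on the remaining eigendirections.

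Your final aggregation step does not deliver the stated conclusion. The stopping-time/Markov argument shows that with probability $\ge 1-\delta$ \emph{some} epoch start in $\hat\Theta^*$ is an $(\epsilon,\sqrt{\rho\epsilon})$-SOSP; but Algorithm \ref{alg:MRPG} returns a \emph{uniform} draw from $\hat\Theta^*$, so existence of one good iterate among $M=\lfloor K/k_{\tx{thre}}\rfloor$ gives success probability only $\approx 1/M$, and ``adjusting constants'' does not fix this. The paper instead proves that the \emph{average} failure probability over epoch starts is small: writing $\cE_m$ for the event that $\ttheta_m$ is not an SOSP and $\cP_m=\PP(\cE_m)$, it combines $\EE[J(\ttheta_{m+1})-J(\ttheta_m)\mid\cE_m]\ge J_{\tx{thre}}$ with a third regime you omitted, the bounded-decrease estimate $\EE[J(\ttheta_{m+1})-J(\ttheta_m)\mid\cE_m^c]\ge -\delta J_{\tx{thre}}/2$ (from $L$-smoothness and $\beta^2\le\delta J_{\tx{thre}}/(2L\ell^2)$), telescopes, and obtains $\tfrac{1}{M}\sum_{m}\cP_m\le\delta$. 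That average \emph{is} the failure probability of the uniform draw, which is what the theorem asserts. To repair your argument you need this bounded-decrease regime and the averaging step, not a stopping time.
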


The proof of Theorem \ref{thm:conv_MRPG} originates but improves the proof techniques in \cite{daneshmand2018escaping}\footnote{Our convergence result corresponds to Theorem $2$ in \cite{daneshmand2018escaping}. However, we have identified and informed the authors, and have been acknowledged,  that there is a flaw in their proof, which breaks the convergence rate claimed in the original version of the paper (personal communication). At the time the current manuscript is prepared, the authors of \cite{daneshmand2018escaping} have corrected the proof in the Arxiv version using a similar idea to what we proposed in the personal communication. }, and is relegated to Appendix \S\ref{apx_sosp}. 
Note that we follow the convention of using  $(\epsilon,\sqrt{\rho\epsilon})$ as the  convergence criterion for approximate  second-order stationary points  \cite{nesterov2006cubic,zhang2017hitting,jin2017escape}, which  reflects the natural relation between the
gradient and the Hessian. 
Theorem \ref{thm:conv_MRPG} concludes that it is possible for the  policy gradient algorithm to escape the saddle points efficiently and retrieve  an approximate  second-order stationary point in a polynomial number of steps\footnote{Note that the number of steps here in \eqref{equ:final_res} corresponds to the notion of \emph{iteration complexity} in the literation of optimization, which is not the total \emph{sample complexity} since each step of our algorithm requires two rollouts with random but finite horizon. Thus, the \emph{expected} number of samples, i.e., state-action-reward tuples, equals $1/(1-\gamma)+1/(1-\gamma^{1/2})$ times the expression in \eqref{equ:final_res}}.  Additionally, if all saddle points are \emph{strict} (cf. definition in \cite{ge2015escaping}), the modified  RPG algorithm will converge  to an actual \emph{local-optimal} policy.   In the next section, we experimentally investigate the validity of our algorithms proposed in this section and the previous section, and probe whether reward-shaping to mitigate challenges of non-convexity is borne out empirically.

\begin{figure}[t]	
\vspace{4mm}
	\centerline{\includegraphics[width=.45\columnwidth,clip = true]{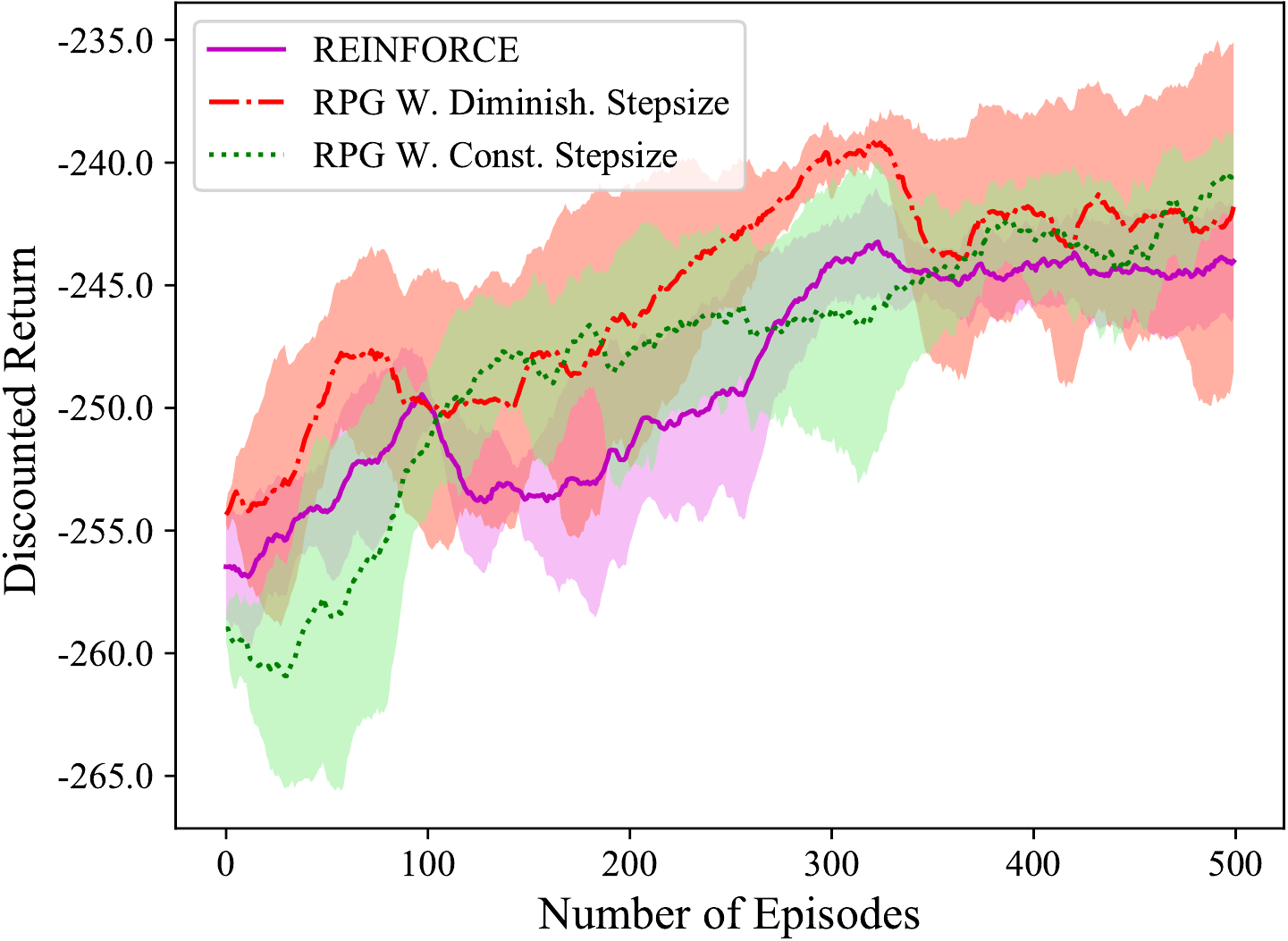}\hspace{4mm}
	\includegraphics[width=.45\columnwidth,clip = true]{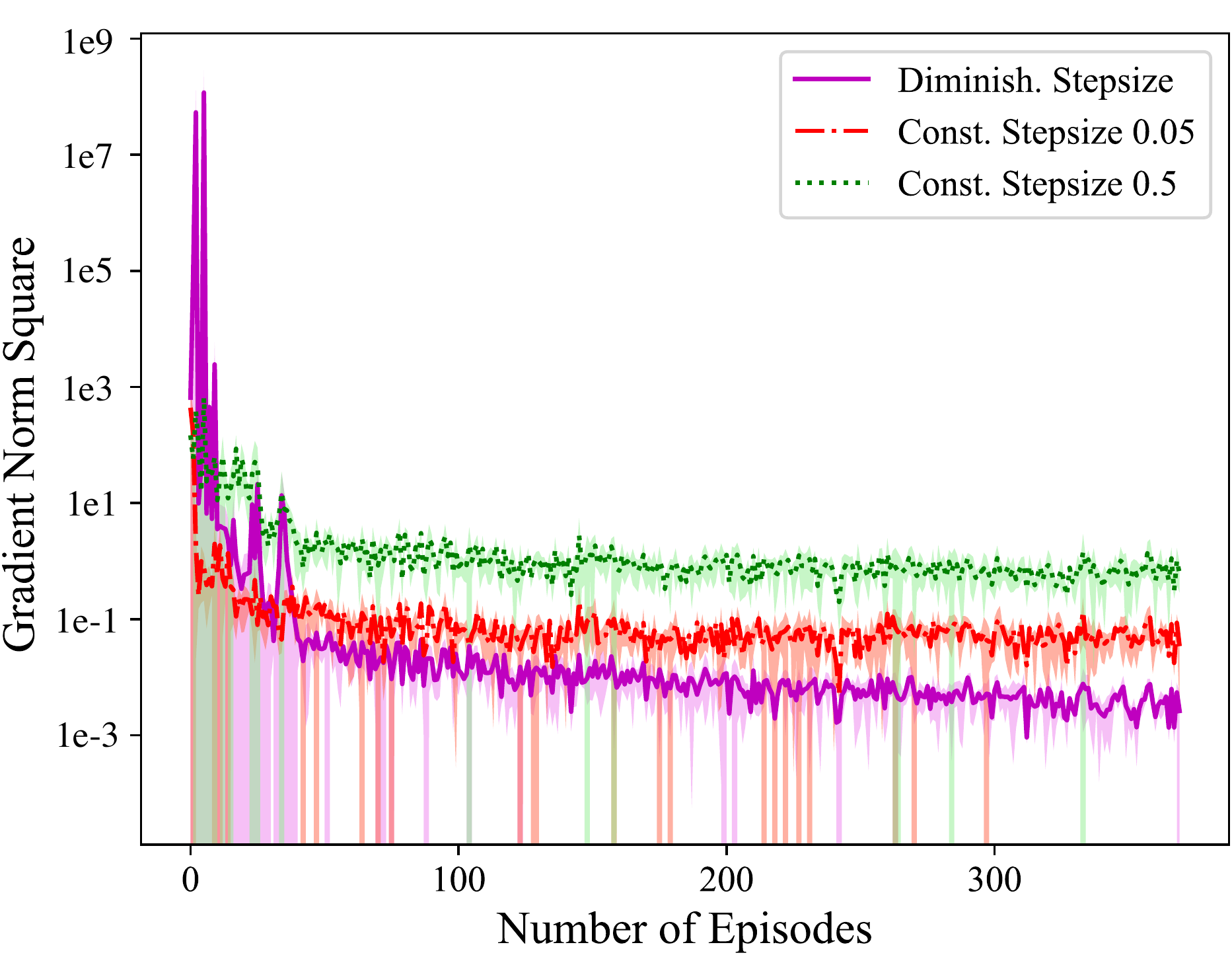}}
	\vspace{0mm}
	{{\centering
	\caption{Left:The convergence of discounted return $J(\theta)$ when REINFORCE and the proposed RPG (Algorithm \ref{alg:RPG})   are used. Both  diminishing and constant stepsizes are used for RPG. Right: The convergence of gradient norm square $\EE\|\nabla J(\theta_m)\|^2$ when the proposed RPG (Algorithm \ref{alg:RPG}) with different stepsizes are used. } \label{fig:discount_return}}}\vspace{0mm}
\end{figure}
%

 \section{Simulations}\label{sec:simulations}
In this section, we present several experiments to corroborate the results of the previous two sections.    
Focused on the discounted infinite-horizon setting, we use the Pendulum environment in the OpenAI gym \cite{Brockman2016open} as the test environment. In particular, the pendulum starts in a random position, and the goal is to swing it up so that it stays upright. The state is a vector of dimension three, i.e.,  $s_t=(\cos(\theta_t),\sin(\theta_t),\dot{\theta}_t)^\top$, where $\theta_t$ is the angle between the pendulum and the upright direction, and $\dot{\theta}_t$ is the derivative of $\theta_t$. The action $a_t$ is a one-dimensional scalar representing the joint effort. In addition, the reward $R(s_t,a_t)$ is defined as
\#\label{equ:reward_Pendulum}
R(s_t,a_t):=-(\theta^2 + 0.1*\dot{\theta}^2 + 0.001*a_t^2)-0.5,
\#
which lies in $[-17.1736044,-0.5]$, since $\theta$ is normalized between $[-\pi,\pi]$ and $a_t$ lies in $[-20,20]$.
Different from the reward in the original Pendulum environment, we shift the reward by $-0.5$, so that the negativity of $R(s,a)$ in Assumption \ref{assum:regularity_for_CNC} is satisfied, i.e., $L_R=0.5$. 
The transition probability follows the physical rules of Newton's Second Law.
We choose the discounted factor $\gamma$ to be $0.97$. 
We use Gaussian policy $\pi_\theta$ truncated over   the support $[-20,20]$, which is parameterized as $\pi_\theta(\cdot\given s)=\cN(\mu_\theta(s),\sigma^2)$, where $\sigma=1.0$ and $\mu_\theta(s):\cS\to \cA$ is a neural network with two hidden layers. Each hidden layer contains $10$ neurons and uses \emph{softmax} as activation functions. The output layer of $\mu_\theta(s)$ uses $\tanh$ as the activation function. One can verify  that such  parameterization satisfies Assumption \ref{assum:regularity_for_CNC}.

\begin{figure}[t]
	\centering
	\vspace{-3pt}	\includegraphics[width=0.45\columnwidth,clip = true]{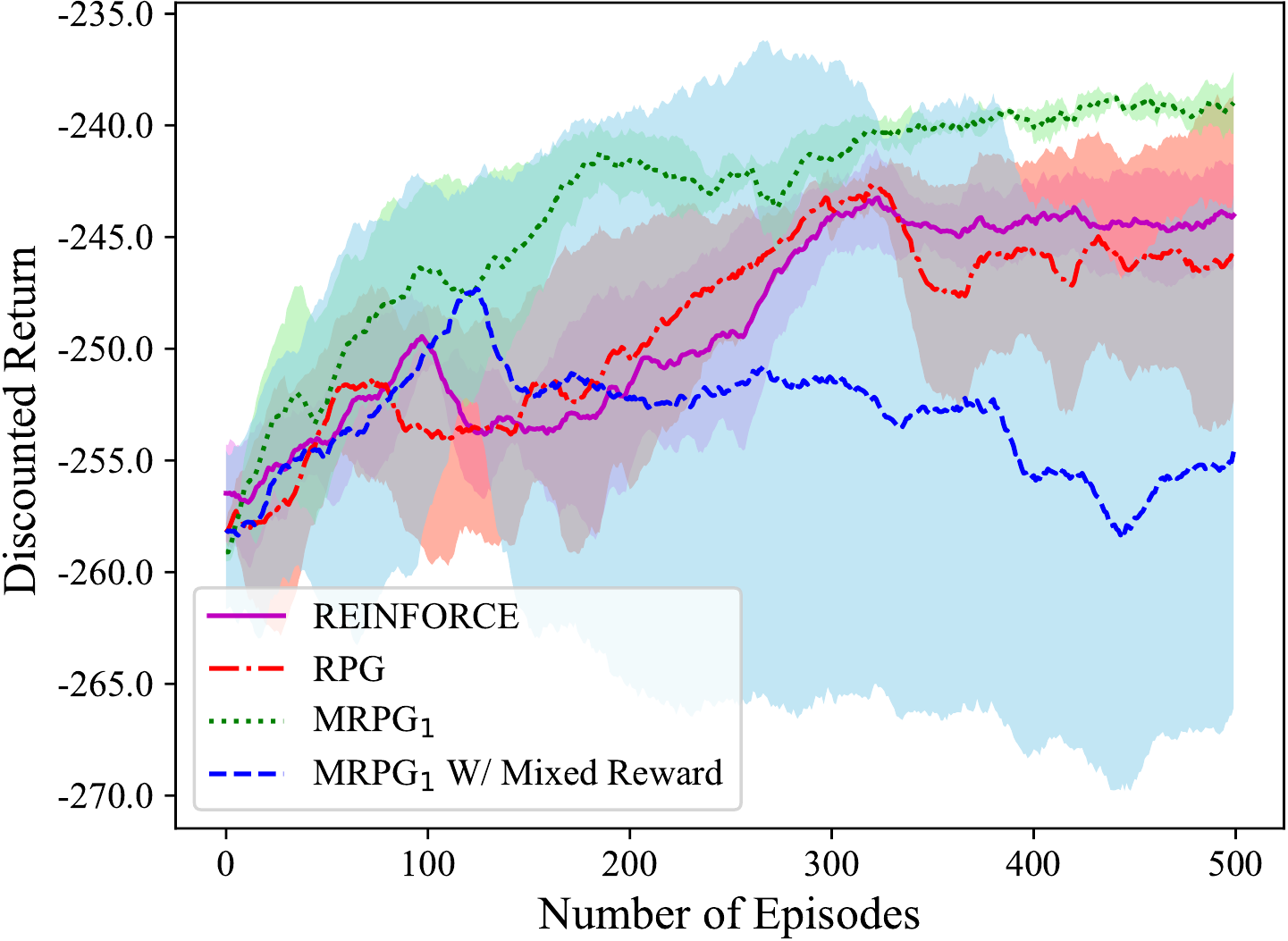}\hspace{4mm}
	\includegraphics[width=0.45\columnwidth,clip = true]{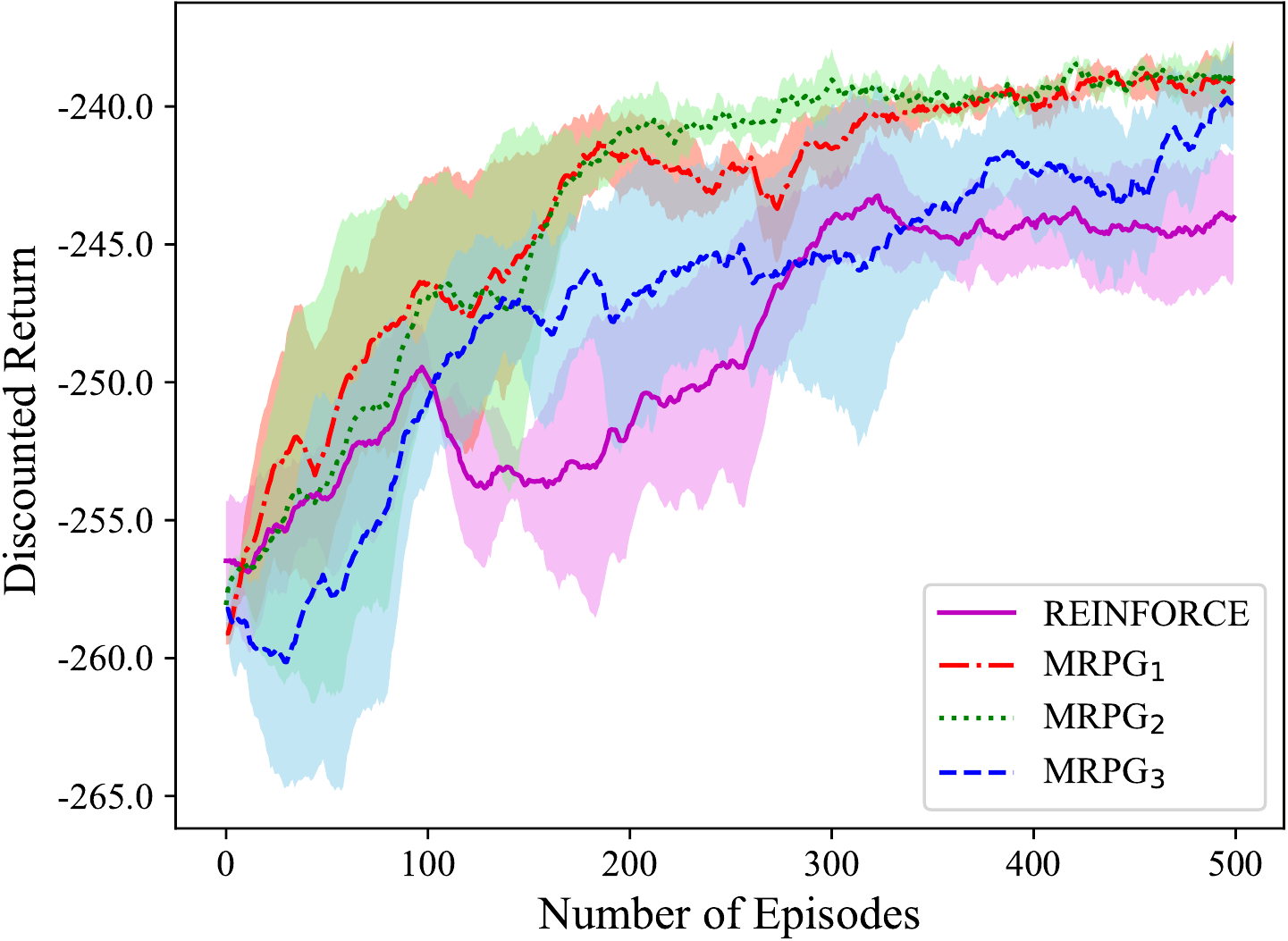}
	\vspace{0mm}
	{{\centering
	\caption{Left:The convergence of discounted return $J(\theta)$ when REINFORCE, the proposed RPG (Algorithm \ref{alg:RPG}), and MRPG$_1$  (Algorithm \ref{alg:MRPG}) algorithms are used. The MRPG$_1$ algorithm is also evaluated for the setting with mixed reward, i.e., the reward can be both negative and positive. Right: the convergence of discounted return $J(\theta)$ when REINFORCE and the proposed  MRPG (Algorithm \ref{alg:MRPG}) algorithms with three types of policy gradients are used }\label{fig:offset_comp}}}
\end{figure}

We first compare the performance of our algorithms with that of the popular REINFORCE algorithm \cite{williams1992simple}. To make the comparison fair, we choose the length of the rollout horizon of REINFORCE to be the expected value of the {geometric distribution with success probability $1-\gamma^{1/2}$, i.e., $T=\gamma^{1/2}/(1-\gamma^{1/2})=66$. Recall that the length of the rollout horizon for Q-function  estimate in our algorithm is drawn from $\text{Geom}(1-\gamma^{1/2})$.} After each rollout, i.e., one episode, the policy parameter $\theta_k$ is updated and then evaluated by calculating the value of $J(\theta)$ using the Monte-Carlo method.

First, we compare the performance of RPG ( Algorithm \ref{alg:RPG}) with that of the popular REINFORCE algorithm \cite{williams1992simple}. 
Recall that REINFORCE creates bias in the policy gradient estimate. To make a fair comparison, we set  the rollout horizon of REINFORCE to be the expected value of the {geometric distribution with success probability $1-\gamma^{1/2}$, the same distribution that the rollout horizon for Q-function  estimate in Algorithm \ref{alg:est_Q} is drawn from,  i.e., $T=\gamma^{1/2}/(1-\gamma^{1/2})=66$. For RPG, we test both diminishing and constant stepsizes, where the former is set as $\alpha_k=1/\sqrt{k}$ and the latter is set as $\alpha_k=0.05$ for all $k\geq 0$.  

Fig. \ref{fig:discount_return}(left) plots the discounted return obtained along the iterations of REINFORCE and our proposed RPG algorithms. The return is estimated by  running the algorithms $30$ times. The bar areas represent the standard deviation region calculated using the $30$ simulations. It is shown that  our proposed algorithms perform slightly better than REINFORCE  in terms of discounted return, but with higher variance. This is expected since our policy gradient estimates are unbiased, compared to REINFORCE. Moreover, the higher variance possibly comes from the additional randomness of the rollout horizon in RPG. 

We also evaluate the convergence of the expected gradient norm square studied in Theorem \ref{thm:alg1_conv_rate} and Corollary \ref{coro:alg1_conv_rate_const_step}. 
Fig. \ref{fig:discount_return}(right)  plots the empirical estimates of $\EE\|\nabla J(\theta_m)\|^2$ after $30$ runs of the algorithms. It is verified that  using  diminishing stepsize results in convergence of the gradient norm   to zero a.s. (the curve keeps decreasing), while using constant stepsizes leads to an error that is lower-bounded above zero (the curves stay mostly unchanged after certain episodes). Moreover, it is shown that  a smaller constant stepsize indeed creates a smaller size of the error neighborhood. Convergence rates  under  both diminishing and constant stepsize choices are sublinear, as identified in our theoretical results.

We further evaluate the performance of Algorithm \ref{alg:MRPG} that uses intermittently larger stepsizes with stochastic policy gradient $\hat{\nabla}J(\theta)$ as \emph{MRPG$_1$}, which theoretically we expect to yield favorable performance under appropriately designed incentive structure. Thus, in order to verify  the significance of  the CNC condition  in escaping saddle points, we also test the MRPG$_1$ algorithm in the environment that has mixed reward, i.e., the reward can be both positive and negative. We generate such an environment by  adding a constant $10.0$ onto the reward defined in  \eqref{equ:reward_Pendulum}. Each learning curve in Figure \ref{fig:offset_comp} is run for $30$ times, and the bar area in the figure represents plus or minus one sample standard deviation of $30$ trajectories.

First, it can be seen from Figure \ref{fig:offset_comp}(left) that RPG achieves almost identical performance as REINFORCE, which shows that the \emph{unbiasedness} of the RPG update seems to not hold great advantages over the biased PG obtained from REINFORCE, in finding the first-order stationary points. On the other hand, Figure \ref{fig:offset_comp}(left) illustrates that MRPG$_1$ achieves greater return than RPG, substantiating the necessity of finding approximate second-order stationary points than first-order ones. To the best of our knowledge, this appears to be the first empirical observation in RL that saddle-escaping techniques may benefit the policy learning. Interestingly, when the  reward is  ``mixed'',  the MRPG$_1$ algorithm suffers  from  lower  discounted return and larger variance across $30$ trajectories. This may be explained by the fact that different trajectories may converge to different saddle points or stationary points that may be of very different qualities. This observation also justifies the necessity of escaping undesirable saddle points  for policy gradient updates.

We have also evaluated the performance of the other two MRPG algorithms that use  the policy gradients   $\check{\nabla}J(\theta)$ and $\tilde{\nabla}J(\theta)$, which we refer to as \emph{MRPG$_2$} and \emph{MRPG$_3$}, respectively, in Figure \ref{fig:offset_comp}(right). Recall that the key differences of these alternative gradient updates is that they subtract a baseline or use Bellman's evaluation equation, respectively, to replace the $Q$ function that multiplies the score function with the advantage function. As shown in Figure \ref{fig:offset_comp}(right), the update with baselines does not always benefit the variance reduction, at least in this experiment. In particular, the policy gradient $\check{\nabla} J(\theta)$ that uses $V(s)$ as the baseline indeed outperforms the MRPG$_1$ algorithm; however, the policy gradient $\check{\nabla} J(\theta)$ that uses TD error to estimate the advantage function performs even worse. Even so, all the MRPG algorithms beat  the REINFORCE algorithm in terms of discounted return, and MRPG$_1$ and MRPG$_2$ also beat REINFORCE in terms of variance.

\section{Conclusions}\label{sec:conclusions}

Despite its tremendous popularity, policy gradient methods in RL have rarely been investigated in terms of their global convergence, i.e., there seems to be a gap in the literature regarding the limiting properties of policy search and how this is a function of the initialization. Motivated by this gap, we have adopted the perspective and tools from nonconvex optimization to clarify and partially overcome some of the challenges of policy search for MDPs over continuous spaces.
In particular, we have developed a series of random-horizon policy gradient algorithms, which generate \emph{unbiased} estimates of the policy gradient for the \emph{infinite-horizon} setting. Under standard assumptions for RL, we have first recovered the convergence to stationary-point policies for such first-order optimization algorithms. Moreover, by virtue of the recent results in nonconvex optimization, we have proposed the modified  RPG algorithms by introducing periodically enlarged stepsizes, which are shown to be able to escape saddle points and converge to actual \emph{local optimal} policies under mild conditions that are satisfied for most modern reinforcement learning applications. Specifically, we have  given an optimization-based  explanation of why reward-reshaping is beneficial: it improves the curvature profile of the problem in neighborhoods of saddle points. On the inverted pendulum balancing task, we have experimentally corroborated our theoretical findings. Many enhancements are possible  for future research directions via the link between policy search and nonconvex optimization: rate improvements through acceleration, trust region methods, variance reduction, and Quasi-Newton methods.


\clearpage
\appendix 

\section{Detailed Proofs}\label{append:proof}
We  provide in this appendix the proofs of some  of the results stated  in the main body of the paper. 

\subsection{Proof of Lemma \ref{lemma:lip_policy_grad}}\label{apx_lemma:lip_policy_grad}
\begin{proof}

The proof proceeds by expanding the expression of $\nabla J(\theta)$, and upper-bounding the norm $\|\nabla J(\theta^1)-\nabla J(\theta^2)\|$ for any $\theta^1,\theta^2\in\RR^d$ by multiples of $\|\theta^1-\theta^2\|$. To this end, we first substitute the  definition of $Q_{\pi_\theta}$ into the expression of policy gradient  in \eqref{equ:policy_grad_3}, which gives
	\#\label{eq:expanded_policy_gradient}
	\nabla J(\theta)=\sum_{t=0}^\infty\sum_{\tau=0}^\infty\gamma^{t+\tau}\cdot\int R(s_{t+\tau},a_{t+\tau})\cdot\nabla\log\pi_{\theta}(a_t\given s_t)\cdot p_{\theta,0:t+\tau}\cdot ds_{1:{t+\tau}}da_{0:{t+\tau}},
	\#
	where  for brevity we have introduced  
	\#\label{equ:def_trajec_dense}
	p_{\theta,0:t+\tau}=
	\bigg[\prod_{u=0}^{t+\tau-1}p(s_{u+1}\given s_u,a_u)\bigg]\cdot\bigg[\prod_{u=0}^{t+\tau}\pi_\theta(a_u\given s_u)\bigg]
	\#
	to represent the probability density of the trajectory $(s_0,a_0,\cdots,s_{t+\tau},a_{t+\tau})$.  Note that \eqref{equ:def_trajec_dense} follows from the Markov property of the trajectory. 
	Hence, for any $\theta^1,\theta^2\in\RR^d$, we can analyze the difference of gradients through \eqref{eq:expanded_policy_gradient} as:
	\#\label{equ:lip_im_1}
	&\quad\|\nabla J(\theta^1)-\nabla J(\theta^2)\|\notag\\
	&=\bigg\|\sum_{t=0}^\infty\sum_{\tau=0}^\infty\gamma^{t+\tau}\cdot\bigg( \int R(s_{t+\tau},a_{t+\tau})\cdot\big\{\nabla\log\pi_{\theta^1}(a_t\given s_t)-\nabla\log\pi_{\theta^2}(a_t\given s_t)\big\}\cdot p_{\theta^1,0:t+\tau}\notag\\
	&\quad+\int R(s_{t+\tau},a_{t+\tau})\cdot\nabla\log\pi_{\theta^2}(a_t\given s_t)\cdot (p_{\theta^1,0:t+\tau}-p_{\theta^2,0:t+\tau})\bigg)ds_{1:{t+\tau}}da_{0:{t+\tau}}\bigg\|\notag\\
	&\leq \sum_{t=0}^\infty\sum_{\tau=0}^\infty \gamma^{t+\tau}\cdot \bigg\{\underbrace{\int \big|R(s_{t+\tau},a_{t+\tau})\big|\cdot \big\|\nabla\log\pi_{\theta^1}(a_t\given s_t)-\nabla\log\pi_{\theta^2}(a_t\given s_t)\big\|  {p}_{\theta^1,0:t+\tau}ds_{1:{t+\tau}}da_{0:{t+\tau}}}_{I_1}\notag\\
	&\quad +\underbrace{\int \big|R(s_{t+\tau},a_{t+\tau})\big|\cdot\big\|\nabla\log\pi_{\theta^2}(a_t\given s_t)\big\|\cdot \big|p_{\theta^1,0:t+\tau}-p_{\theta^2,0:t+\tau}\big|ds_{1:{t+\tau}}da_{0:{t+\tau}}\bigg\|}_{I_2}\bigg\},
	\#
	where the first equality comes from  adding and subtracting the term $\sum_{t=0}^\infty\sum_{\tau=0}^\infty\gamma^{t+\tau}\cdot\int R(s_{t+\tau},a_{t+\tau})\cdot\nabla\log\pi_{\theta^2}(a_t\given s_t)\cdot p_{\theta^1,0:t+\tau}$, and the  inequality follows from Cauchy-Schwarz inequality.
The first term $I_1$ inside the summand on the right-hand side of \eqref{equ:lip_im_1} depends on a difference of score functions, whereas the second term $I_2$ depends on a difference between distributions induced by different policy parameters. We establish that both terms depend only on the norm of the difference between policy parameters. 

	By Assumption \ref{assum:regularity}, we have  $|R(s,a)|\leq U_R$ for any $(s,a)$, and 
	\$
	\|\nabla\log\pi_{\theta^1}(a_t\given s_t)-\nabla\log\pi_{\theta^2}(a_t\given s_t)\|\leq L_\Theta\cdot \|\theta^1-\theta^2\|.
	\$ Hence, we can bound the term $I_1$ in \eqref{equ:lip_im_1} as 
	\#\label{equ:lip_bnd_I_1}
	I_1\leq U_R\cdot L_\Theta\cdot \|\theta^1-\theta^2\|.
	\#
	%
	To bound  the term $I_2$, let $\cU_{t+\tau}=\{u: u=0,\cdots,t+\tau\}$; we first have
	\#\label{equ:lip_im_I_2_0}
	p_{\theta^1,0:t+\tau}-p_{\theta^2,0:t+\tau}=\bigg[\prod_{u=0}^{t+\tau-1}p(s_{u+1}\given s_u,a_u)\bigg]\cdot\bigg[\prod_{u\in \cU_{t+\tau}}\pi_{\theta^1}(a_u\given s_u)-\prod_{u\in \cU_{t+\tau}}\pi_{\theta^2}(a_u\given s_u)\bigg]. 
	\#
	By Taylor expansion of $\prod_{u\in \cU_{t+\tau}}\pi_{\theta}(a_u\given s_u)$,  we have
	\#\label{equ:lip_im_I_2_1}
	&\Bigg|\prod_{u\in\cU_{t+\tau}}\pi_{\theta^1}(a_u\given s_u)-\prod_{u\in\cU_{t+\tau}}\pi_{\theta^2}(a_u\given s_u)\Bigg|=\Bigg|(\theta^1-\theta^2)^\top \bigg[\sum_{m\in\cU_{t+\tau}}\nabla \pi_{\tilde{\theta}}(a_m\given s_m)\prod_{u\in\cU_{t+\tau},u\neq m}\pi_{\ttheta}(a_u\given s_u)\bigg]\Bigg|\notag\\
	&\quad\leq \|\theta^1-\theta^2\| \cdot\sum_{m\in\cU_{t+\tau}}\|\nabla \log\pi_{\tilde{\theta}}(a_m\given s_m)\|\cdot \prod_{u\in\cU_{t+\tau}}\pi_{\ttheta}(a_u\given s_u)\notag\\
	&\quad\leq \|\theta^1-\theta^2\| \cdot (t+\tau+1)\cdot B_{\Theta}\cdot \prod_{u\in\cU_{t+\tau}}\pi_{\ttheta}(a_u\given s_u),
	\#
	where $\ttheta$ is a vector lying between $\theta^1$ and $\theta^2$, i.e., there exists some $\lambda\in[0,1]$ such that $\ttheta=\lambda \theta^1+(1-\lambda)\theta^2$. Therefore, we can  upper bound $|p_{\theta^1,0:t+\tau}-p_{\theta^2,0:t+\tau}|$ by substituting \eqref{equ:lip_im_I_2_1} into \eqref{equ:lip_im_I_2_0}, which further upper-bounds the term $I_2$  in \eqref{equ:lip_im_1} by 
	\#\label{equ:I_2_bnd}
	I_2&\leq \|\theta^1-\theta^2\| \cdot U_R\cdot B^2_{\Theta}\cdot \int\bigg[\prod_{u=0}^{t+\tau-1}p(s_{u+1}\given s_u,a_u)\bigg]\cdot (t+\tau+1)\cdot \prod_{u\in\cU_{t+\tau}}\pi_{\ttheta}(a_u\given s_u)ds_{1:{t+\tau}}da_{0:{t+\tau}}\notag\\
	&=  \|\theta^1-\theta^2\| \cdot U_R\cdot B^2_{\Theta}\cdot (t+\tau+1),
	\# 
	where the last equality follows from the fact that $\prod_{u=0}^{t+\tau-1}p(s_{u+1}\given s_u,a_u)\cdot \prod_{u\in\cU_{t+\tau}}\pi_{\ttheta}(a_u\given s_u)$ is a valid probability density function.
	
	Combining the bounds for $I_1$ in \eqref{equ:lip_bnd_I_1} and that for $I_2$ in \eqref{equ:I_2_bnd}, we have 
	\$
	\|\nabla J(\theta^1)-\nabla J(\theta^2)\|&\leq \sum_{t=0}^\infty\sum_{\tau=0}^\infty \gamma^{t+\tau}\cdot U_{R}\cdot \big[L_\Theta+B^2_{\Theta}\cdot (t+\tau+1) \big]\cdot \|\theta^1-\theta^2\|\\
	&\leq \bigg[\frac{1}{(1-\gamma)^2}\cdot U_{R}\cdot L_\Theta+\frac{1+\gamma}{(1-\gamma)^3}\cdot U_{R}\cdot B^2_{\Theta}\bigg]\cdot \|\theta^1-\theta^2\|,
	\$
	where the last inequality uses the expression for the limit of a geometric series, given that $\gamma\in(0,1)$:
	\$
	\sum_{t=0}^\infty\sum_{\tau=0}^\infty \gamma^{t+\tau}=\frac{1}{(1-\gamma)^2},\quad \sum_{t=0}^\infty\sum_{\tau=0}^\infty \gamma^{t+\tau}\cdot (t+\tau+1)=\frac{1+\gamma}{(1-\gamma)^3}.
	\$
	Hence, we define the Lipschitz constant $L$ as follows 
	\$
	L:=\frac{U_{R}\cdot L_\Theta}{(1-\gamma)^2} +\frac{(1+\gamma)\cdot U_R\cdot B^2_{\Theta}}{(1-\gamma)^3},
	\$
	which completes the proof.
\end{proof}

\subsection{Proof of Theorem \ref{thm:unbiased_and_bnd_grad_est}}\label{apx_thm:unbiased_and_bnd_grad_est}
\begin{proof}
We first establish  unbiasedness of the stochastic estimates of the policy gradient. 
	We start by showing   unbiasedness of the Q-estimate, i.e., for any $(s,a)\in\cS\times\cA$ and $\theta\in\RR^d$, $\EE[\hat{Q}_{\pi_\theta}(s,a)\given \theta,s,a]={Q}_{\pi_\theta}(s,a)$. In particular, from the  definition of $\hat{Q}_{\pi_{\theta}}(s,a)$, we have {
	\#\label{equ:grad_hat}
	\EE[\hat{Q}_{\pi_{\theta}}(s,a)\given \theta,s,a]&=\EE\bigg[\sum_{t=0}^{T'}\gamma^{t/2}\cdot R(s_t,a_t)\bigggiven \theta,s_0=s,a_0=a\bigg]\notag\\
	&=\EE\bigg[\sum_{t=0}^{\infty}\mathbbm{1}_{T'\geq t\geq 0}\cdot \gamma^{t/2}\cdot R(s_t,a_t)\bigggiven \theta,s_0=s,a_0=a\bigg]
	\#
	where we have replaced $T'$ by $\infty$ since we use the indicator function $\mathbbm{I}$ such that the summand for $t\geq T'$ is null.} 

	Now we show that the inner-expectation over $T'$ and summation in \eqref{equ:grad_hat} can be interchanged. In fact, by Assumption \ref{assum:regularity} regarding the boundedness of the reward, for any $N>0$, we have
	\#\label{equ:thm_21_im}
	\EE_{T'} \bigg(\bigg|\sum_{t=0}^N \mathbbm{1}_{0\leq t\leq T'}\cdot \gamma^{t/2}\cdot R_t\bigg|\bigg)\leq U_{R}\cdot \EE_{T'} \bigg(\sum_{t=0}^N \mathbbm{1}_{0\leq t\leq T'}\cdot \gamma^{t/2}\bigg).
	\#
	Note that on the right-hand side of \eqref{equ:thm_21_im}, the random variable in the expectation is monotonically increasing and the limit as $N\to\infty$ exists. Thus, 
	by the  Monotone Convergence Theorem \cite{yeh2006real},  we can  interchange the limit with the integral, i.e., the sum and inner-expectation in \eqref{equ:grad_hat} as follows
	\#\label{equ:Q_unbiased22}
	\EE\bigg\{\bigg[&\sum_{t=0}^{\infty}\mathbbm{1}_{T'\geq t\geq 0} \cdot \gamma^{t/2}\cdot R(s_t,a_t)\bigggiven \theta,s_0=s,a_0=a\bigg]\bigg\}\notag\\
	&=\sum_{t=0}^{\infty}\EE\bigg[\EE_{T'}(\mathbbm{1}_{T'\geq t\geq 0})\cdot \gamma^{t/2}\cdot R(s_t,a_t)\given \theta,s_0=s,a_0=a\bigg]\notag\\
	& =\sum_{t=0}^{\infty}\EE\bigg[ \gamma^{t}\cdot R(s_t,a_t)\given \theta,s_0=s,a_0=a\bigg],
	\#
	where we have also used in the first equality  the fact that $T'$ is drawn independently of the system evolution $(s_{1:T'},a_{1:T'})$, and d in the second  equality  the fact that $T'\sim\tx{Geom}(1-\gamma^{1/2})$ and thus $\EE_{T'}(\mathbbm{1}_{T'\geq t\geq 0}) = \mathbb{P}(T' \geq t\geq 0) = \gamma^{t/2}$  in the second equality.
	Furthermore, since $|\sum_{t=0}^N\gamma^t R(s_t,a_t)|\leq \sum_{t=0}^N\gamma^t U_R$, and $\lim_{N\to\infty}\EE(\sum_{t=0}^N\gamma^t U_R)$ exists, by the Dominated   Convergence Theorem \cite{bartle2014elements}, the right-hand side of \eqref{equ:Q_unbiased22} can be written as
	\$
	\sum_{t=0}^{\infty}\EE\bigg[ \gamma^{t}\cdot R(s_t,a_t)\given \theta,s_0=s,a_0=a\bigg]=\EE\bigg[\sum_{t=0}^{\infty}\gamma^{t}\cdot R(s_t,a_t)\given \theta,s_0=s,a_0=a\bigg]=Q_{\pi_\theta}(s,a),
	\$
	which completes the proof of the  unbiasedness of $\hat{Q}_{\pi_{\theta}}(s,a)$.

	Similar logic allows us to establish that $\hat{V}_{\pi_\theta}(s)$ is an unbiased estimate  of ${V}_{\pi_\theta}(s)$, i.e., for any  $s\in\cS$ and $\theta\in\RR^d$, 
	\$ 
	\EE\bigg[\sum_{t=0}^{\infty}\mathbbm{1}_{T'\geq t\geq 0} \cdot \gamma^{t/2}\cdot  R(s_t,a_t)\bigggiven \theta,s_0=s\bigg]=\EE[\hat{V}_{\pi_\theta}(s)\given \theta,s]={V}_{\pi_\theta}(s),
	\$
	where the expectation is taken along the trajectory as well as with respect to  the random horizon  $T'\sim\tx{Geom}(1-\gamma^{t/2})$. 
	Therefore, if $s'\sim \PP(\cdot\given s,a)$ and $a'\sim \pi_\theta(\cdot\given s')$,  we have
	\#\label{equ:A_unbiased}
	\EE[\hat{Q}_{\pi_\theta}(s,a)-\hat{V}_{\pi_\theta}(s)\given \theta,s,a]=\EE[R(s,a)+\gamma\hat{V}_{\pi_\theta}(s')-\hat{V}_{\pi_\theta}(s)\given \theta,s,a]=A_{\pi_\theta}(s,a).
	\#
	That is, $\hat{Q}_{\pi_\theta}(s,a)-\hat{V}_{\pi_\theta}(s)$ and $R(s,a)+\gamma\hat{V}_{\pi_\theta}(s')-\hat{V}_{\pi_\theta}(s)$ are both unbiased estimates of the advantage function $A_{\pi_\theta}(s,a)$.

	Now we are ready to show  unbiasedness of the stochastic gradients $\hat{\nabla} J(\theta), \check{\nabla} J(\theta)$, and $\tilde{\nabla} J(\theta)$. First for $\hat{\nabla} J(\theta)$, we have from \eqref{equ:Q_unbiased22} that
	\#\label{equ:RPG_unbiased}
	&\EE[\hat{\nabla} J(\theta)\given \theta]=\EE_{T,(s_T,a_T)}\big\{\EE_{T',(s_{1:T'},a_{1:T'})}[\hat{\nabla} J(\theta)\given \theta,s_T=s,a_T=a]\biggiven \theta\big\}\notag\\
	&\quad =\EE_{T,(s_T,a_T)}\bigg(\EE_{T',(s_{1:T'},a_{1:T'})}\bigg\{\frac{1}{1-\gamma}\cdot \hat{Q}_{\pi_\theta}(s_T,a_T)\cdot \nabla\log[\pi_{\theta}(a_T\given s_T)]\biggiven \theta,s_T=s,a_T=a\bigg\}\bigggiven \theta\bigg)\notag\\
	&\quad =\EE_{T,(s_T,a_T)}\bigg\{\frac{1}{1-\gamma}\cdot {Q}_{\pi_\theta}(s_T,a_T)\cdot \nabla\log[\pi_{\theta}(a_T\given s_T)]\bigggiven \theta\bigg\}.
	\#
	By using the identity function $\mathbbm{1}_{t=T}$, \eqref{equ:RPG_unbiased} can be further written as
	\#\label{equ:RPG_exchange_0}
	\EE[\hat{\nabla} J(\theta)\given \theta]=\frac{1}{1-\gamma}\cdot \EE_{T,(s_T,a_T)}\bigg\{\sum_{t=0}^\infty\mathbbm{1}_{t=T}\cdot {Q}_{\pi_\theta}(s_t,a_t)\cdot \nabla\log[\pi_{\theta}(a_t\given s_t)]\bigggiven \theta\bigg\}.
	\#
	Note that by Assumption   \ref{assum:regularity}, $\|\hat{\nabla} J(\theta)\|$ is directly bounded by $(1-\gamma)^{-2}\cdot U_{R}\cdot B_\Theta$, since there is only one nonzero term in the summation in \eqref{equ:RPG_exchange_0}. Thus, by the Dominated Convergence Theorem, we can interchange the  summation and expectation in \eqref{equ:RPG_exchange_0} and obtain 
	\#
	&\EE[\hat{\nabla} J(\theta)\given \theta]=\sum_{t=0}^\infty\frac{P(t=T)}{1-\gamma}\cdot\EE\bigg\{{Q}_{\pi_\theta}(s_t,a_t)\cdot \nabla\log[\pi_{\theta}(a_t\given s_t)]\bigggiven \theta\bigg\}\notag\\
	&\quad=\sum_{t=0}^\infty\gamma^t\cdot\EE\bigg\{{Q}_{\pi_\theta}(s_t,a_t)\cdot \nabla\log[\pi_{\theta}(a_t\given s_t)]\bigggiven \theta\bigg\}\label{equ:RPG_exchange_2}\\
	&\quad=\sum_{t=0}^\infty\gamma^t\cdot\int_{s\in\cS,a\in\cA}p(s_t=s,a_t=a\given s_0,\pi_\theta)\cdot{Q}_{\pi_\theta}(s,a)\cdot \nabla\log[\pi_{\theta}(a\given s)] dsda,\label{equ:RPG_exchange_3}
	\#
	where \eqref{equ:RPG_exchange_2} is due to the fact that $T\sim\tx{Geom}(1-\gamma)$ and thus  $P(t=T)=(1-\gamma)\gamma^t$, and in \eqref{equ:RPG_exchange_3} we define $p(s_t=s,a_t=a\given s_0,\pi_\theta)=p(s_t=s\given s_0,\pi_\theta)\cdot \pi_\theta(a_t=a\given s_t)$, with  $p(s_t=s\given s_0,\pi_\theta)$ being the probability of state $s_t=s$ given initial state $s_0$ and policy $\pi_\theta$.
 	By  the	Dominated Convergence Theorem, we can further re-write \eqref{equ:RPG_exchange_3} by interchanging the summation and the integral, i.e., 
	\#
	\EE[\hat{\nabla} J(\theta)\given \theta]&=\int_{s\in\cS,a\in\cA}\sum_{t=0}^\infty\gamma^t\cdot p(s_t=s\given s_0,\pi_\theta)\cdot{Q}_{\pi_\theta}(s,a)\cdot \nabla \pi_{\theta}(a\given s) dsda.\label{equ:RPG_exchange_4}
	\#
	Note that the expression in \eqref{equ:RPG_exchange_4} coincides with the policy gradient given in \eqref{equ:policy_grad_1}, 	which completes the proof of   unbiasedness of $\hat{\nabla} J(\theta)$.

	For $\check{\nabla} J(\theta)$, we have the following identity similar to \eqref{equ:RPG_unbiased}: 
		\#\label{equ:RPG_unbiased_2}
	&\EE[\check{\nabla} J(\theta)\given \theta]=\EE_{T,(s_T,a_T)}\big\{\EE_{T',(s_{1:T'},a_{1:T'})}[\check{\nabla} J(\theta)\given \theta,s_T=s,a_T=a]\biggiven \theta\big\}\notag\\
	&\quad =\EE_{T,(s_T,a_T)}\bigg(\EE_{T',(s_{1:T'},a_{1:T'})}\bigg\{\frac{\hat{Q}_{\pi_\theta}(s,a)-\hat{V}_{\pi_\theta}(s)}{1-\gamma}\cdot \nabla\log[\pi_{\theta}(a_T\given s_T)]\biggiven \theta,s_T=s,a_T=a\bigg\}\bigggiven \theta\bigg)\notag\\
	&\quad =\EE_{T,(s_T,a_T)}\bigg\{\frac{1}{1-\gamma}\cdot {A}_{\pi_\theta}(s_T,a_T)\cdot \nabla\log[\pi_{\theta}(a_T\given s_T)]\bigggiven \theta\bigg\}.
	\#
	From the definition of ${A}_{\pi_\theta}(s_T,a_T)={Q}_{\pi_\theta}(s_T,a_T)-{V}_{\pi_\theta}(s_T)$, \eqref{equ:RPG_unbiased_2} further implies
		\#
	&\EE[\check{\nabla} J(\theta)\given \theta]=\int_{s\in\cS,a\in\cA}\sum_{t=0}^\infty\gamma^t\cdot p(s_t=s\given s_0,\pi_\theta)\cdot[{Q}_{\pi_\theta}(s,a)-{V}_{\pi_\theta}(s)]\cdot \nabla \pi_{\theta}(a\given s) dsda,\label{equ:RPG_exchange_2_2}
	\#
	which follows from  similar  arguments as   in \eqref{equ:RPG_exchange_0}-\eqref{equ:RPG_exchange_4}. Note that \eqref{equ:RPG_exchange_2_2} also coincides with the policy gradient given in \eqref{equ:policy_grad_1}, since $\int_{a\in\cA}{V}_{\pi_\theta}(s)\cdot\nabla \pi_{\theta}(a\given s) da=0$. Similar arguments also hold for the stochastic policy gradient $\tilde{\nabla} J(\theta)$, since \eqref{equ:RPG_unbiased_2} can also be obtained from $\EE[\tilde{\nabla} J(\theta)\given \theta]$. This proves    unbiasedness of $\check{\nabla} J(\theta)$ and $\tilde{\nabla} J(\theta)$.
	 
	{
	Now we  establish  almost sure  boundedness of the stochastic policy gradients $\hat{\nabla}J(\theta),\check{\nabla}J(\theta)$, and $\tilde{\nabla}J(\theta)$. In particular, from the definition of $\hat{\nabla}J(\theta)$ in \eqref{equ:SGD_eva}
	\#
	\|\hat{\nabla}J(\theta)\|&=\bigg\|\frac{1}{1-\gamma}\cdot \hat{Q}_{\pi_\theta}(s_T,a_T)\cdot \nabla\log[\pi_{\theta}(a_T\given s_T)]\bigg\|\leq \frac{B_{\Theta}}{1-\gamma}\bigg|\sum_{t=0}^{T'}\gamma^{t/2}\cdot R(s_t,a_t)\bigg|\notag\\
	&\leq  \frac{B_{\Theta}}{1-\gamma}\sum_{t=0}^{T'}\gamma^{t/2}\cdot U_R\leq \frac{B_{\Theta}}{1-\gamma}\sum_{t=0}^{\infty}\gamma^{t/2}\cdot U_R=\frac{B_{\Theta}U_R}{(1-\gamma)(1-\gamma^{1/2})}=:\hat{\ell},\label{equ:def_B_J_1}
	\#
	where we have used Assumption \ref{assum:regularity}, namely that $|R(s,a)|\leq U_R$ and $\|\nabla\log\pi_{\theta}(a\given s)\|\leq B_{\Theta}$ for any $s,a$ and $\theta$. Similarly, we arrive at  the following bounds	
		\#
	&\|\check{\nabla}J(\theta)\|
	\leq  \frac{2B_{\Theta}}{1-\gamma}\sum_{t=0}^{\infty}\gamma^{t/2}\cdot U_R=\frac{2B_{\Theta}U_R}{(1-\gamma)(1-\gamma^{1/2})}=:\check{\ell},\label{equ:def_B_J_2}\\
	&\|\tilde{\nabla}J(\theta)\|
	\leq  \frac{B_{\Theta}}{1-\gamma}\bigg[1+\bigg(\gamma+1\bigg)\bigg(\sum_{t=0}^{\infty}\gamma^{t/2}\bigg)\bigg]\cdot U_R\leq \frac{(2+\gamma-\gamma^{1/2})B_{\Theta}U_R}{(1-\gamma)(1-\gamma^{1/2})}=:\tilde{\ell},\label{equ:def_B_J_3}
	\#
	which completes the proof. 
	}

\end{proof}

\subsection{Proof of Theorem \ref{thm:alg1_as_conv}}\label{append:proof:thm:alg1_as_conv}
\begin{proof}
Recall that the  policy gradient method  follows \eqref{equ:RPG_update}. At each iteration $k$, we define the random horizon used in estimating $\hat{Q}_{\pi_{\theta_k}}(s_{T_{k+1}},a_{T_{k+1}})$ in the inner-loop of Algorithm \ref{alg:est_Q} as $T'_{k+1}$. 
We then introduce a probability measure space $(\Omega,\cF,P)$ and 
 let $\{\cF_k\}_{k\geq 0}$ denote a sequence  of increasing sigma-algebras  $\cF_0\subset\cF_1\subset\cdots\cF_{\infty}\subset\cF$, where  
 \$
 \cF_k=\sigma\big(\{\theta_\tau\}_{\tau=0:k},\{T_\tau\}_{\tau=0:k},\{(s_\tau,a_\tau)\}_{\tau=T_0:T_k},\{T'_\tau\}_{\tau=0:k},\big\{\{(s_\tau,a_\tau)\}_{\tau=0:T'_0},\cdots,\{(s_\tau,a_\tau)\}_{\tau=0:T'_k}\big\}\big).
 \$ 
   We also define the following auxiliary random variable $W_k$, which is essential to the analysis of Algorithm \ref{alg:RPG}
	\#\label{eq:submartingale_seq_definition}
	W_k=J(\theta_k)-L \hat{\ell}^2\sum_{j=k}^\infty\alpha_k^2,
	\# 
	where we recall that $L$ is the Lipchitz constant of $\nabla J(\theta)$ as defined in \eqref{equ:L_Theta_def}, and $\hat{\ell}$ is the upper bound of $\|\hat{\nabla}J(\theta_k)\|$ in Theorem \ref{thm:unbiased_and_bnd_grad_est}.   
	 Noting that $J(\theta)$ is bounded and $\{\alpha_k\}$ is square-summable, we conclude that $W_k$ is bounded for any $k\geq 0$. In fact, we can show that $\{W_k\}$ is a bounded submartingale, as stated in the following lemma.
\begin{lemma}\label{lemma:submartingale}
	The objective function sequence defined by Algorithm \ref{alg:RPG}  satisfies the following stochastic ascent property:
	\#
	\EE[J(\theta_{k+1})\given \cF_k]\geq J(\theta_k)+\EE[(\theta_{k+1}-\theta_{k})\given \cF_k]^\top \nabla J(\theta_k)-L\alpha_k^2\hat{\ell}^2\label{equ:ascent_lemma}
	\#
	Moreover, the sequence $\{W_k\}$ defined in \eqref{eq:submartingale_seq_definition} is a bounded submartingale. 
	\#
	\EE(W_{k+1}\given \cF_k)\geq W_k+\alpha_k\|\nabla J(\theta_k)\|^2.\label{equ:submartingale}
	\#
\end{lemma}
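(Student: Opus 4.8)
The plan is to derive the ascent inequality \eqref{equ:ascent_lemma} from the descent-lemma form of Lipschitz smoothness of $\nabla J$ (Lemma \ref{lemma:lip_policy_grad}), and then use it together with the unbiasedness and a.s. boundedness of $\hat{\nabla}J$ from Theorem \ref{thm:unbiased_and_bnd_grad_est} to show $\{W_k\}$ is a submartingale. First I would recall the standard consequence of $L$-Lipschitz continuity of $\nabla J$: for all $\theta^1,\theta^2$,
\$
J(\theta^2)\geq J(\theta^1)+(\theta^2-\theta^1)^\top\nabla J(\theta^1)-\frac{L}{2}\|\theta^2-\theta^1\|^2,
\$
which holds because $J$ is $C^1$ with $L$-Lipschitz gradient (the sign is flipped relative to the usual descent lemma since we want a lower bound). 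Applying this with $\theta^1=\theta_k$, $\theta^2=\theta_{k+1}$ and using $\theta_{k+1}-\theta_k=\alpha_k\hat{\nabla}J(\theta_k)$ gives
\$
J(\theta_{k+1})\geq J(\theta_k)+\alpha_k\hat{\nabla}J(\theta_k)^\top\nabla J(\theta_k)-\frac{L\alpha_k^2}{2}\|\hat{\nabla}J(\theta_k)\|^2.
\$
Now I would take the conditional expectation given $\cF_k$. Since $\theta_k$ is $\cF_k$-measurable and $\EE[\hat{\nabla}J(\theta_k)\mid\cF_k]=\nabla J(\theta_k)$ by Theorem \ref{thm:unbiased_and_bnd_grad_est} (the estimate depends only on the fresh randomness $T_{k+1},T'_{k+1}$ and the associated rollout, which is independent of $\cF_k$), the cross term becomes $\alpha_k\|\nabla J(\theta_k)\|^2\geq 0$; and since $\|\hat{\nabla}J(\theta_k)\|\leq\hat{\ell}$ a.s., the last term is bounded below by $-\frac{L\alpha_k^2}{2}\hat{\ell}^2\geq -L\alpha_k^2\hat{\ell}^2$. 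Keeping the cross term in its expectation form $\EE[(\theta_{k+1}-\theta_k)\mid\cF_k]^\top\nabla J(\theta_k)$ yields exactly \eqref{equ:ascent_lemma}; keeping it as $\alpha_k\|\nabla J(\theta_k)\|^2$ is what we use next.

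For the submartingale claim, I would subtract $L\hat{\ell}^2\sum_{j=k+1}^\infty\alpha_j^2$ from both sides of the sharpened bound $\EE[J(\theta_{k+1})\mid\cF_k]\geq J(\theta_k)+\alpha_k\|\nabla J(\theta_k)\|^2-L\alpha_k^2\hat{\ell}^2$ and regroup. Using the definition \eqref{eq:submartingale_seq_definition}, $W_k=J(\theta_k)-L\hat{\ell}^2\sum_{j=k}^\infty\alpha_j^2$, and the tail identity $\sum_{j=k}^\infty\alpha_j^2=\alpha_k^2+\sum_{j=k+1}^\infty\alpha_j^2$, one gets
\$
\EE[W_{k+1}\mid\cF_k]=\EE[J(\theta_{k+1})\mid\cF_k]-L\hat{\ell}^2\sum_{j=k+1}^\infty\alpha_j^2\geq J(\theta_k)-L\hat{\ell}^2\sum_{j=k}^\infty\alpha_j^2+\alpha_k\|\nabla J(\theta_k)\|^2=W_k+\alpha_k\|\nabla J(\theta_k)\|^2,
\$
which is \eqref{equ:submartingale}. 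Boundedness of $\{W_k\}$ follows since $|J(\theta_k)|\leq U_R/(1-\gamma)$ (shown in the paper) and $0\leq\sum_{j=k}^\infty\alpha_j^2\leq\sum_{j=0}^\infty\alpha_j^2<\infty$ by Assumption \ref{assum:RM_Cond}, so $W_k$ is uniformly bounded above and below; integrability is immediate. I would also note $W_k$ is $\cF_k$-measurable because $\theta_k$ is.

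The only genuinely delicate point — and the step I'd expect to need the most care — is justifying $\EE[\hat{\nabla}J(\theta_k)\mid\cF_k]=\nabla J(\theta_k)$ with respect to the specific filtration $\cF_k$: one must be precise that the horizons $T_{k+1},T'_{k+1}$ and all the rollout samples generated at iteration $k+1$ are drawn independently of $\cF_k$, so that the tower property reduces the conditional expectation to the unconditional unbiasedness established in Theorem \ref{thm:unbiased_and_bnd_grad_est}; I would spell this out by conditioning first on $\cF_k$ together with $(s_{T_{k+1}},a_{T_{k+1}})$, invoking the a.s.\ bound $\|\hat{\nabla}J(\theta_k)\|\le\hat\ell$ to license the use of the Lipschitz/descent inequality pathwise before taking expectations, and then appealing to the a.s.\ boundedness once more to guarantee all expectations are finite. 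Everything else is routine algebra with geometric-series tails.
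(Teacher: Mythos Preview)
Your proposal is correct and follows essentially the same approach as the paper: both derive a pathwise quadratic lower bound from the $L$-Lipschitz continuity of $\nabla J$ (you invoke the standard descent lemma with constant $L/2$ and then weaken; the paper uses a first-order Taylor expansion plus Cauchy--Schwarz, which yields the slightly looser constant $L$ directly), then take conditional expectations using the unbiasedness and a.s.\ boundedness of $\hat{\nabla}J(\theta_k)$ from Theorem~\ref{thm:unbiased_and_bnd_grad_est}, and finally regroup with the tail sum $\sum_{j\geq k}\alpha_j^2$ to obtain the submartingale inequality. Your treatment of the filtration/measurability issue is, if anything, more explicit than the paper's.
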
	
\begin{proof}
	Note that $W_k$  is adapted to the sigma-algebra $\cF_k$. Consider the first-order Taylor expansion of $J(\theta_{k+1})$ at $\theta_k$. Then there exists some	 $\ttheta_k=\lambda \theta_k+(1-\lambda)\theta_{k+1}$ for some $\lambda\in[0,1]$ such that $W_{k+1}$ can be written as  
	
	\$
	W_{k+1}
	&=J(\theta_k)+(\theta_{k+1}-\theta_{k})^\top \nabla J(\ttheta_k)-L \hat{\ell}^2\sum_{j=k+1}^\infty \alpha_k^2\\
	&= J(\theta_k)+(\theta_{k+1}-\theta_{k})^\top \nabla J(\theta_k)+(\theta_{k+1}-\theta_{k})^\top [\nabla J(\ttheta_k)-\nabla J(\theta_k)]-L \hat{\ell}^2\sum_{j=k+1}^\infty \alpha_k^2\\
	&\geq J(\theta_k)+(\theta_{k+1}-\theta_{k})^\top \nabla J(\theta_k)-L\|\theta_{k+1}-\theta_{k}\|^2-L \hat{\ell}^2\sum_{j=k+1}^\infty \alpha_k^2
	\$
	where the {second} equality comes from adding and subtracting $(\theta_{k+1}-\theta_{k})^\top \nabla J(\theta_k)$, and the inequality follows from applying  Lipschitz continuity of the gradient (Lemma \ref{lemma:lip_policy_grad}), {i.e. 
	\$
	&(\theta_{k+1}-\theta_{k})^\top [\nabla J(\ttheta_k)-\nabla J(\theta_k)]\geq -\|\theta_{k+1}-\theta_{k}\|\cdot \|\nabla J(\ttheta_k)-\nabla J(\theta_k)\|\geq -\|\theta_{k+1}-\theta_{k}\|\cdot L \|\ttheta_k-\theta_k\|\\
	&\quad =-\|\theta_{k+1}-\theta_{k}\|\cdot L (1-\lambda)\cdot\|\theta_{k+1}-\theta_k\|\geq -L\cdot \|\theta_{k+1}-\theta_{k}\|^2,
	\$
	}with the constant $L$ being defined in \eqref{equ:L_Theta_def}. By taking conditional expectation over $\cF_k$ on both sides, we further obtain
	\#
	\EE[W_{k+1}\given \cF_k]&\geq J(\theta_k)+\EE[(\theta_{k+1}-\theta_{k})\given \cF_k]^\top \nabla J(\theta_k)-L\EE(\|\theta_{k+1}-\theta_{k}\|^2\given \cF_k)-L \hat{\ell}^2\sum_{j=k+1}^\infty \alpha_k^2\notag\\
	&=J(\theta_k)+\EE[(\theta_{k+1}-\theta_{k})\given \cF_k]^\top \nabla J(\theta_k)-L\alpha_k^2\EE(\|\hat{\nabla}J(\theta_k)\|^2\given \cF_k)-L \hat{\ell}^2\sum_{j=k+1}^\infty \alpha_k^2\notag\\
	&\geq J(\theta_k)+\EE[(\theta_{k+1}-\theta_{k})\given \cF_k]^\top \nabla J(\theta_k)-L\alpha_k^2\hat{\ell}^2-L \hat{\ell}^2\sum_{j=k+1}^\infty \alpha_k^2,\label{equ:Wk_iter}
	\#
	where the first inequality comes from substituting $\theta_{k+1}-\theta_{k} = \alpha_k \hat{\nabla}J(\theta_k)$ and the second one uses the fact that $\mathbb{E}[\|\hat{\nabla}J(\theta_k)\|^2 ] \leq \hat{\ell}^2$. 
	By definition of $J(\theta)$, we have
	\$
	\EE[J(\theta_{k+1})\given \cF_k]&\geq J(\theta_k)+\EE[(\theta_{k+1}-\theta_{k})\given \cF_k]^\top \nabla J(\theta_k)-L\alpha_k^2\hat{\ell}^2,
	\$
	which  establishes the first argument of the lemma.

	In addition, note that 
	\$
	\EE[(\theta_{k+1}-\theta_{k})\given \cF_k]=\alpha_k\EE(\hat{\nabla}J(\theta_k)\given \cF_k)=\alpha_k\nabla J(\theta_k),
	\$
	which we may substitute into the right-hand side of \eqref{equ:Wk_iter}, and upper-bound the negative constant terms by null to obtain
	\$
	\EE(W_{k+1}\given \cF_k)&\geq W_k+ \alpha_k\|\nabla J(\theta_k)\|^2.
	\$
	This concludes the proof.
\end{proof}

Now we are in a position to show that $\|\nabla J(\theta_k)\|$ converges to zero as $k\to\infty$. In particular, by definition, we have the boundedness of $W_k$, i.e., $W_k\leq J^*$, where $J^*$ is the global maximum of $J(\theta)$. Thus, \eqref{equ:submartingale} can be written as 
\$
\EE(J^*-W_{k+1}\given \cF_k)\leq (J^*-W_k)-\alpha_k\|\nabla J(\theta_k)\|^2,
\$
where $\{J^*-W_{k}\}$ is a nonnegative sequence of random variables. By applying the  supermartingale convergence theorem \cite{robbins1985convergence}, we have
\#\label{equ:sum_bnd_almost_sure}
\sum_{k=1}^\infty\alpha_k\|\nabla J(\theta_k)\|^2<\infty,~~\text{a.s.}. 
\#

Note that by Assumption \ref{assum:RM_Cond}, the stepsize $\{\alpha_k\}$ is non-summable. Therefore, the only way that \eqref{equ:sum_bnd_almost_sure} may be valid is if the following holds:
\#\label{equ:liminf_bnd}
\liminf_{k\to\infty}~\|\nabla J(\theta_k)\|=0.
\#

From here, we proceed to show that $\limsup_{k\to\infty}\|\nabla J(\theta_k)\|=0$ by contradiction. To this end, we construct a  sequence of $\{\theta_k\}$ that has two sub-sequences lying in two disjoint sets. We aim to establish a contradiction on the sum of the distances between the points in the two sets. 
Specifically,  suppose that  for some random realization $\omega\in\Omega$,   we have
\#\label{equ:limsup_bnd}
\limsup_{k\to\infty}\|\nabla J(\theta_k)\|={\epsilon}>0.
\#
Then  it must hold that $\|\nabla J(\theta_k)\|\geq 2\epsilon/3$ for infinitely many $k$. Moreover, \eqref{equ:liminf_bnd} implies  that $\|\nabla J(\theta_k)\|\leq \epsilon/3$ for infinitely many $k$. We thus can define the following sets $\cN_1$ and $\cN_2$ as
\$
\cN_1=\{\theta_k: \|\nabla J(\theta_k)\|\geq 2\epsilon/3\},\quad \cN_2=\{\theta_k: \|\nabla J(\theta_k)\|\leq \epsilon/3\}.
\$
Note that since $\|\nabla J(\theta)\|$ is continuous by Lemma \ref{lemma:lip_policy_grad}, 
both sets are closed  in the 
Euclidean space. We define the distance between  the two sets as
\$
D(\cN_1,\cN_2)=\inf_{\theta^1\in\cN_1}\inf_{\theta^2\in\cN_2}\|\theta^1-\theta^2\|.
\$
Then $D(\cN_1,\cN_2)$ must be a positive number since the sets $\cN_1$ and $\cN_2$ are disjoint and closed. Moreover, since both $\cN_1$ and $\cN_2$ are infinite sets, there exists an index set $\cI$ such that the subsequence $\{\theta_k\}_{k\in\cI}$ of $\{\theta_k\}_{k\geq 0}$ crosses the two sets infinitely often. In particular, there exist two sequences of indices $\{s_i\}_{i\geq 0}$ and $\{t_i\}_{i\geq 0}$ such that 
\$
\{\theta_k\}_{k\in\cI}=\{\theta_{s_i},\cdots,\theta_{t_i-1}\}_{i\geq 0},
\$
with $
\{\theta_{s_i}\}_{i\geq 0}\subseteq \cN_1, \{\theta_{t_i}\}_{i\geq 0}\subseteq \cN_2
$, 
and for any indices $k=s_i+1,\cdots,t_i-1 \in \cI$ (not including $s_i$) in between the indices $\{s_i\}$ and $\{t_i\}$, we have
\$
\frac{\epsilon}{3}\leq \|\nabla J(\theta_k)\|\leq \frac{2\epsilon}{3}\leq \|\nabla J(\theta_{s_i})\|.
\$
Setting aside this expression for now, let us analyze the norm-difference of iterates $\theta_{k}$ associated with indices in $\cI$. By the triangle inequality, we may write
\#\label{equ:Dis_inf}
\sum_{k\in\cI}\|\theta_{k+1}-\theta_k\|=\sum_{i=0}^\infty\sum_{k=s_i}^{t_i-1}\|\theta_{k+1}-\theta_k\|\geq \sum_{i=0}^\infty \|\theta_{s_i}-\theta_{t_i}\|\geq \sum_{i=0}^\infty D(\cN_1,\cN_2)=\infty.
\#
Moreover,  \eqref{equ:sum_bnd_almost_sure} implies that
\$
\infty>\sum_{k\in\cI}\alpha_k\|\nabla J(\theta_k)\|^2\geq \sum_{k\in\cI}\alpha_k\cdot\frac{\epsilon^2}{9},
\$
using the definition of $\epsilon$ in \eqref{equ:limsup_bnd}. We may therefore conclude that $\sum_{k\in\cI}\alpha_k<\infty$.
Also from Theorem \ref{thm:unbiased_and_bnd_grad_est}, we have that the stochastic policy gradient has a  finite  first  moment: $\EE(\|\hat{\nabla} J(\theta_k)\|)<\infty$. Taken together, we therefore have 
\$
\sum_{k\in\cI}\EE(\|\theta_{k+1}-\theta_{k}\|)=\sum_{k\in\cI}\alpha_k\EE(\|\hat{\nabla} J(\theta_k)\|)<\infty. 
\$
The monotone convergence theorem then implies that $\sum_{k\in\cI}\|\theta_{k+1}-\theta_k\|<\infty$ almost surely, which contradicts \eqref{equ:Dis_inf}. Therefore, \eqref{equ:Dis_inf} must be false, which implies that the hypothesis that the limsup is bounded away from zero, as in \eqref{equ:limsup_bnd}, is invalid. As a consequence, its negation must be true: the set of sample paths for which this condition holds has measure zero. This allows us to conclude
\$
\limsup_{k\to\infty}\|\nabla J(\theta_k)\|=0, ~~a.s.
\$
This statement together with \eqref{equ:liminf_bnd} allows us to conclude that $\lim_{k\to\infty}\|\nabla J(\theta_k)\|=0$ a.s., which completes the proof.
\end{proof}

\subsection{Proofs of Theorem \ref{thm:alg1_conv_rate} and Corollary \ref{coro:alg1_conv_rate_const_step}}\label{apx_thm:alg1_conv_rate}
\begin{proof}
	By the stochastic ascent property, i.e., \eqref{equ:ascent_lemma} in  Lemma \ref{lemma:submartingale}, we can write 
	\#
	\EE[J(\theta_{k+1})\given \cF_k]&\geq J(\theta_k)+\EE[(\theta_{k+1}-\theta_{k})\given \cF_k]^\top \nabla J(\theta_k)-L\alpha_k^2\hat{\ell}^2\notag\\
	&=J(\theta_k)+\alpha_k\|\nabla J(\theta_k)\|^2-L\alpha_k^2\hat{\ell}^2.\label{equ:Jk_iter}
	\#
	Let $U(\theta)=J^*-J(\theta)$, where $J^*$ is the global optimum\footnote{Such an optimum is assumed to always exist for the parameterization $\pi_\theta$.} of $J(\theta)$. Then, we immediately have $0\leq U(\theta)\leq 2U_R/(1-\gamma)$ since $|J(\theta)|\leq U_R/(1-\gamma)$  for any $\theta$. Moreover,  we may write \eqref{equ:Jk_iter} as 
	\#
	\EE[U(\theta_{k+1})\given \cF_k]\leq U(\theta_k)-\alpha_k\|\nabla J(\theta_k)\|^2+L\alpha_k^2\hat{\ell}^2.\label{equ:Uk_iter}
	\#
	%
	Let $N>0$ be an arbitrary positive integer. By re-ordering the terms in \eqref{equ:Uk_iter} and summing over $k-N,\cdots,k$, we have
	\#\label{equ:rate_im_1}
	&\sum_{m=k-N}^k\EE\|\nabla J(\theta_m)\|^2\leq \sum_{m=k-N}^k\frac{1}{\alpha_m}\cdot\big\{\EE[U(\theta_{m})]-\EE[U(\theta_{m+1})]\big\}+\sum_{m=k-N}^kL\alpha_m\hat{\ell}^2\\
	&\quad= \sum_{m=k-N}^k\bigg(\frac{1}{\alpha_m}-\frac{1}{\alpha_{m-1}}\bigg)\cdot\EE[U(\theta_{m})]-\frac{1}{\alpha_{k}}\cdot\EE[U(\theta_{k+1})]+\frac{1}{\alpha_{k-N-1}}\cdot\EE[U(\theta_{k-N})]+\sum_{m=k-N}^kL\alpha_m\hat{\ell}^2 \notag
	\#
	where the equality follows from adding and subtracting an additional term ${\alpha_{k-N-1}}^{-1}\cdot\EE[U(\theta_{k-N})]$. 
	Now, using the fact that the value sub-optimality is bounded by $0\leq U(\theta)\leq 2U_R/(1-\gamma)$, we can further bound the right-hand side of \eqref{equ:rate_im_1} as
\#\label{equ:rate_im_2}
	& \sum_{m=k-N}^k\bigg(\frac{1}{\alpha_m}-\frac{1}{\alpha_{m-1}}\bigg)\cdot\EE[U(\theta_{m})]-\frac{1}{\alpha_{k}}\cdot\EE[U(\theta_{k+1})]+\frac{1}{\alpha_{k-N-1}}\cdot\EE[U(\theta_{k-N})]+\sum_{m=k-N}^kL\alpha_m\hat{\ell}^2\notag\\
	&\quad\leq  \sum_{m=k-N}^k\bigg(\frac{1}{\alpha_m}-\frac{1}{\alpha_{m-1}}\bigg)\cdot\frac{2U_R}{1-\gamma}+\frac{1}{\alpha_{k-N-1}}\cdot\frac{2U_R}{1-\gamma}+\sum_{m=k-N}^kL\alpha_m\hat{\ell}^2\notag\\
	&\quad\leq  \frac{1}{\alpha_k}\cdot\frac{2U_R}{1-\gamma}+\sum_{m=k-N}^kL\alpha_m\hat{\ell}^2,
	\#
	%
	%
	where we drop the nonpositive term $-\EE[U(\theta_{k+1})]/{\alpha_{k}}$ and upper-bound $\EE[U(\theta_{m})]$ by $2U_R/(1-\gamma)$ for all $m=k-N,\cdots,k$. We use the fact that the stepsize is non-increasing $\alpha_m \leq \alpha_{m-1}$, such that $1/{\alpha_m} \geq  1/{\alpha_{m-1}}$. By substituting $\alpha_k=k^{-a}$ into \eqref{equ:rate_im_2} and then \eqref{equ:rate_im_1}, we further have
	\#\label{equ:rate_imme_1}
	\sum_{m=k-N}^k\EE\|\nabla J(\theta_m)\|^2\leq O\bigg(k^a\cdot\frac{2U_R}{1-\gamma}+L\hat{\ell}^2\cdot[k^{1-a}-(k-N)^{1-a}]\bigg),
	\#
	where we use the fact that 
	\$
	\sum_{m=k-N}^k  {m^{-a}}\leq {k^{1-a}}-{(k-N)^{1-a}}
	\$
	for $a\in(0,1)$.
	Setting $N=k-1$ and dividing   by $k$ on both sides of \eqref{equ:rate_imme_1}, we obtain 
	\#\label{equ:rate_imme_2}
	\frac{1}{k}\sum_{m=1}^k\EE\|\nabla J(\theta_m)\|^2\leq O\bigg(k^{a-1}\cdot\frac{2U_R}{1-\gamma}+L\hat{\ell}^2\cdot[k^{-a}-k^{-1}]\bigg)\leq O(k^{-p}),
	\# 
	%
	where $p=\min\{1-a,a\}$. By definition of $K_\epsilon$, we have
	\$
	\EE\|\nabla J(\theta_k)\|^2>\epsilon,\quad~\text{for any~~}k<K_\epsilon,
	\$
	which together with \eqref{equ:rate_imme_2} gives us
	\$
	\epsilon\leq \frac{1}{K_\epsilon}\sum_{m=1}^{K_\epsilon}\EE\|\nabla J(\theta_m)\|^2\leq O(K_\epsilon^{-p}).
	\$
	This shows that $K_\epsilon\leq O(\epsilon^{-1/p})$.
	Note that $\max_{a\in(0,1)}p(a)=1/2$ with $a=1/2$, 	 which  concludes the proof of Theorem \ref{thm:alg1_conv_rate}.

	From \eqref{equ:rate_im_1} in the proof of Theorem \ref{thm:alg1_conv_rate}, we obtain  that for any $k>0$ and $0\leq N< k$,  
	\#\label{equ:coro_med_1}
	&\sum_{m=k-N}^k\EE\|\nabla J(\theta_m)\|^2\leq \sum_{m=k-N}^k\frac{1}{\alpha}\cdot\big\{\EE[U(\theta_{m})]-\EE[U(\theta_{m+1})]\big\}+\sum_{m=k-N}^kL\alpha \hat{\ell}^2\notag\\
	&\quad= \frac{1}{\alpha}\cdot\big\{\EE[U(\theta_{k})]-\EE[U(\theta_{k-N+1})]\big\}+\sum_{m=k-N}^kL\alpha \hat{\ell}^2\leq \frac{1}{\alpha}\cdot \frac{2U_R}{1-\gamma}+(N+1)\cdot L\alpha \hat{\ell}^2,
	\#
	where the equality follows from telescope cancellation and the second inequality follows from the fact that $0\leq U(\theta)\leq 2U_R/(1-\gamma)$. 
	By choosing $N=k-1$ and dividing both sides of \eqref{equ:coro_med_1} by $k$, we obtain 
	\#\label{equ:conv_const_step_im}
	\frac{1}{k}\sum_{m=1}^k\EE\|\nabla J(\theta_m)\|^2\leq \frac{1}{k\alpha}\cdot \frac{2U_R}{1-\gamma}+L\alpha \hat{\ell}^2\leq O(\alpha L \hat{\ell}^2),
	\#
	which completes  the proof of Corollary \ref{coro:alg1_conv_rate_const_step}.
\end{proof}

\subsection{Proof of Lemma \ref{lemma:offset_reward_nochange}}\label{apx_offset}
\begin{proof}
	A policy $\pi$ is an optimal policy for  the MDP if and only if the corresponding Q-function  satisfies the  Bellman equation \cite{bellman1954theory}, namely, for any $(s,a)\in\cS\times\cA$
	\$
	Q_{\pi}(s,a)=R(s,a)+\gamma\cdot \EE_{s'}\bigg[\max_{a'\in\cA} Q_{\pi}(s',a')\bigg].
	\$
	For any $C\in\RR$, by adding  ${C}/{(1-\gamma)}$ to both sides, we obtain
	\$
	Q_{\pi}(s,a)+\frac{C}{1-\gamma}&=R(s,a)+C+\gamma\cdot \EE_{s'}\bigg[\max_{a'\in\cA} Q_{\pi}(s',a')+ \frac{C}{1-\gamma}\bigg]\\
	&=\tilde{R}(s,a)+\gamma\cdot \EE_{s'}\bigg[\max_{a'\in\cA} \tilde{Q}_{\pi}(s',a')\bigg],
	\$
	where $\tilde{Q}_{\pi}(s',a')=Q_{\pi}(s',a')+ {C}/{(1-\gamma)}$ is the Q-function corresponding to $\tilde{R}$ under policy $\pi$. Since $C\in\RR$ can be any value, we conclude the proof for the opposite direction. 
\end{proof} 

\subsection{Proof of Lemma \ref{lemma:Hessian_Lip}}\label{apx_Hessian}
\begin{proof}
	First, from Theorem $3$ in  \cite{furmston2016approximate}, we know that 
	 the Hessian $\cH(\theta)$ of $J(\theta)$ takes the form
	\#\label{equ:H_def}
	\cH(\theta)=\cH_1(\theta)+\cH_2(\theta)+\cH_{12}(\theta)+\cH^\top_{12}(\theta),
	\#
	where the matrices  $\cH_1$, $\cH_2$, and $\cH_{12}$ have the form
	\#
	\cH_1(\theta)&=\int_{s\in\cS,a\in\cA}\rho_\theta(s,a)\cdot Q_{\pi_\theta}(s,a)\cdot\nabla\log\pi_\theta(a\given s)\cdot\nabla\log\pi_\theta(a\given s)^\top dads
	\label{equ:H_1_def}
	\\
	\cH_2(\theta)&=\int_{s\in\cS,a\in\cA}\rho_\theta(s,a)\cdot Q_{\pi_\theta}(s,a)\cdot\nabla^2\log\pi_\theta(a\given s) dads
	\label{equ:H_2_def}
\\
	\cH_{12}(\theta)&=\int_{s\in\cS,a\in\cA}\rho_\theta(s,a)\cdot\nabla\log\pi_\theta(a\given s)\cdot\nabla Q_{\pi_\theta}(s,a)^\top dads,
	\label{equ:H_12_def}
	\#
	and $\nabla Q_{\pi_\theta}(s,a)$ here is the gradient of $Q_{\pi_\theta}(s,a)$ with respect to $\theta$. Recall that $\rho_\theta(s,a)=\rho_{\pi_\theta}(s)\cdot \pi_\theta(a\given s)$ and  $\rho_{\pi_\theta}(s)=(1-\gamma)\sum_{t=0}^\infty\gamma^t p(s_t=s\given s_0,\pi_\theta)$   is the  discounted state-occupancy
measure over $\cS$.  
	
	 $\cH_1$ is the Fisher information of the policy scaled by its value in expectation with respect to the  discounted state-occupancy
measure over $\cS$. $\cH_2$ is the Hessian of the log-likelihood of the policy, i.e., the gradient of the score function, again scaled by its value in expectation with respect to the  discounted state-occupancy
measure over $\cS$. $\cH_{12}$ contains a product between the score function and the derivative of the action-value function with respect to the policy scaled in expectation with respect to the  discounted state-occupancy
measure over $\cS$.
	
	For any $\theta$ and $(s,a)$,	we define the function $f_\theta(s,a)$ as
\#\label{equ:f_def}
f_\theta(s,a)&:=\underbrace{Q_{\pi_\theta}(s,a)\cdot\nabla\log\pi_\theta(a\given s)\cdot\nabla\log\pi_\theta(a\given s)^\top}_{f_\theta^1}+\underbrace{Q_{\pi_\theta}(s,a)\cdot\nabla^2\log\pi_\theta(a\given s)}_{f_\theta^2}\notag\\
&\qquad+\underbrace{\nabla\log\pi_\theta(a\given s)\cdot\nabla Q_{\pi_\theta}(s,a)^\top+\nabla Q_{\pi_\theta}(s,a)\cdot\nabla\log\pi_\theta(a\given s)^\top}_{f_\theta^3}.
\#
For notational convenience, we separate   the   terms in $f_\theta$ into   $f_\theta^1,f_\theta^2$, and $f_\theta^3$ as defined above, which are the terms inside the integrand of $\cH_1$, $\cH_2$, and $\cH_{12} + \cH_{12}^\top$.

Note that by definition, 
\$
\cH(\theta)=\int_{s\in\cS,a\in\cA}\rho_\theta(s,a)\cdot f_\theta(s,a) dads. 
\$ 
Then, for any $\theta^1,\theta^2$, we obtain from \eqref{equ:H_def}-\eqref{equ:H_12_def} that 
\#\label{equ:H_lip_immed_1}
&\big\|\cH(\theta^1)-\cH(\theta^2)\big\|\leq \int\bigg\|\rho_{\theta^1}(s,a)\cdot f_{\theta^1}(s,a)-\rho_{\theta^2}(s,a)\cdot f_{\theta^2}(s,a)\bigg\| dads\notag\\
&\quad\leq \int\Big[\big|\rho_{\theta^1}(s,a)-\rho_{\theta^2}(s,a)\big|\cdot \big\|f_{\theta^1}(s,a)\big\|+\big|\rho_{\theta^2}(s,a)\big|\cdot \big\|f_{\theta^1}(s,a)-f_{\theta^2}(s,a)\big\| \Big]dads,
\#
where  the second inequality  follows from adding and subtracting $\rho_{\theta^2}(s,a)\cdot f_{\theta^1}(s,a)$, and  applying the Cauchy-Schwarz inequality. 
Now we proceed our proof by first establishing the boundedness and Lipschitz continuity of  $f_{\theta}(s,a)$. 
	To this end, we need the following technical lemma.

	\begin{lemma}\label{lemma:Lip_Func_Immed_2}
	 For any $(s,a)$, $Q_{\pi_\theta}(s,a)$ and  $\nabla Q_{\pi_\theta}(s,a)$ are both Lipschitz continuous, with constants $L_{Q}:=U_R\cdot B_\Theta\cdot{\gamma}/{(1-\gamma)^2}$ and 
	 \$
	 L_{QGrad}:=U_R\cdot \bigg[B_\Theta^2\cdot \frac{\gamma(1+\gamma)}{(1-\gamma)^3}+L_\Theta \cdot \frac{\gamma}{(1-\gamma)^2}\bigg],
	 \$
	 respectively. Further, the norm of  $\nabla  Q_{\pi_\theta}(s,a)$ is also uniformly  bounded by $L_{Q}$.  
	\end{lemma}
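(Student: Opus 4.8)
## Proof Proposal for Lemma \ref{lemma:Lip_Func_Immed_2}

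The plan is to exploit the closed-form series expression for $Q_{\pi_\theta}(s,a)$ and its gradient, and then bound differences term-by-term using the boundedness and Lipschitz assumptions of Assumption \ref{assum:regularity_for_CNC} (equivalently Assumption \ref{assum:regularity}). First I would recall that, analogously to the expansion \eqref{eq:expanded_policy_gradient} used in the proof of Lemma \ref{lemma:lip_policy_grad}, one can write
\$
Q_{\pi_\theta}(s,a) = \sum_{\tau=0}^\infty \gamma^\tau \int R(s_\tau,a_\tau)\, p_{\theta,0:\tau}\, ds_{1:\tau}\, da_{1:\tau}\Biggiven s_0=s,a_0=a,
\$
so that $\nabla Q_{\pi_\theta}(s,a)$ is obtained by differentiating the trajectory density $p_{\theta,0:\tau}$, which by the log-derivative trick produces a factor $\sum_{u=1}^\tau \nabla\log\pi_\theta(a_u\given s_u)$ (the $u=0$ term is absent since $a_0=a$ is fixed). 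Using $|R|\le U_R$ and $\|\nabla\log\pi_\theta\|\le B_\Theta$, the sum $\sum_{\tau=0}^\infty \gamma^\tau \tau B_\Theta U_R = U_R B_\Theta \gamma/(1-\gamma)^2$ gives the uniform bound $\|\nabla Q_{\pi_\theta}(s,a)\| \le L_Q$ with $L_Q = U_R B_\Theta \gamma/(1-\gamma)^2$. Since $\|\nabla Q_{\pi_\theta}\|\le L_Q$ uniformly in $\theta$, the mean value theorem immediately yields that $Q_{\pi_\theta}(s,a)$ is $L_Q$-Lipschitz in $\theta$.

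For the Lipschitz continuity of $\nabla Q_{\pi_\theta}(s,a)$, I would differentiate once more, or equivalently bound the difference $\|\nabla Q_{\pi_{\theta^1}}(s,a) - \nabla Q_{\pi_{\theta^2}}(s,a)\|$ directly from the series. Writing $\nabla Q_{\pi_\theta}$ as the sum over $\tau$ of $\gamma^\tau$ times an integral of $R(s_\tau,a_\tau)\cdot\big(\sum_{u=1}^\tau \nabla\log\pi_\theta(a_u\given s_u)\big)\cdot p_{\theta,0:\tau}$, the difference between $\theta^1$ and $\theta^2$ splits (add-and-subtract, as in \eqref{equ:lip_im_1}) into: a term where the score-function sum changes — controlled by the $L_\Theta$-Lipschitz bound on $\nabla\log\pi_\theta$, contributing $\sum_\tau \gamma^\tau \tau L_\Theta U_R = U_R L_\Theta \gamma/(1-\gamma)^2$; and a term where the density $p_{\theta,0:\tau}$ changes — controlled, as in \eqref{equ:lip_im_I_2_1}, by $\|\theta^1-\theta^2\|\cdot(\tau+1)B_\Theta$ times the density, giving a factor $\tau B_\Theta$ from the score sum times $(\tau+1)B_\Theta$ from the density perturbation, hence $\sum_\tau \gamma^\tau \tau(\tau+1) B_\Theta^2 U_R$. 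Using $\sum_{\tau=0}^\infty \gamma^\tau \tau(\tau+1) = 2\gamma/(1-\gamma)^3$ (or the slightly coarser bound $\gamma(1+\gamma)/(1-\gamma)^3$ that the authors appear to use), one collects
\$
\|\nabla Q_{\pi_{\theta^1}}(s,a) - \nabla Q_{\pi_{\theta^2}}(s,a)\| \le U_R\bigg[B_\Theta^2\cdot\frac{\gamma(1+\gamma)}{(1-\gamma)^3} + L_\Theta\cdot\frac{\gamma}{(1-\gamma)^2}\bigg]\|\theta^1-\theta^2\| = L_{QGrad}\|\theta^1-\theta^2\|.
\$

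The main obstacles are bookkeeping ones rather than conceptual: first, carefully justifying that the series for $Q_{\pi_\theta}$ may be differentiated term by term and that all interchanges of sum/integral/derivative are legitimate — this follows from the dominated convergence argument already deployed in the proof of Theorem \ref{thm:unbiased_and_bnd_grad_est}, using the geometric domination $\gamma^\tau \cdot \mathrm{poly}(\tau) \cdot U_R$; second, keeping the combinatorial factors straight when the score-function sum (which has $\tau$ terms) interacts with the density-perturbation bound (which scales like $\tau+1$), since this is what produces the $\tau(\tau+1)$ weight and hence the $(1-\gamma)^{-3}$ power. I would present the $Q$-Lipschitz and $\nabla Q$-boundedness claims first (both falling out of the single uniform bound $\|\nabla Q_{\pi_\theta}\|\le L_Q$), then handle the $\nabla Q$-Lipschitz claim via the two-term add-and-subtract decomposition, mirroring the structure of the proof of Lemma \ref{lemma:lip_policy_grad}.
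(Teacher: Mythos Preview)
Your approach is essentially identical to the paper's: the same series expansion for $Q_{\pi_\theta}$, the same log-derivative formula for $\nabla Q_{\pi_\theta}$ (with the $u=0$ term correctly dropped), the same uniform bound $\|\nabla Q_{\pi_\theta}\|\le L_Q$ yielding Lipschitzness of $Q$, and the same add-and-subtract decomposition for the Lipschitz bound on $\nabla Q$.

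One small bookkeeping correction. In the $Q$-function setting the initial action $a_0=a$ is \emph{fixed}, so the trajectory density $p_\theta(h_\tau\mid s_0,a_0)$ contains only the $\tau$ factors $\pi_\theta(a_u\mid s_u)$ for $u=1,\ldots,\tau$, not $\tau+1$ as in \eqref{equ:lip_im_I_2_1} (where $a_0$ was also drawn from the policy). The density-perturbation bound is therefore $\|\theta^1-\theta^2\|\cdot \tau\cdot B_\Theta$ times a valid density, and combined with the $\tau$-term score sum this produces the weight $\tau^2$, not $\tau(\tau+1)$. Since $\sum_{\tau\ge 1}\gamma^\tau \tau^2=\gamma(1+\gamma)/(1-\gamma)^3$, this recovers the stated constant $L_{QGrad}$ exactly; the paper's $\gamma(1+\gamma)/(1-\gamma)^3$ is not a ``coarser'' replacement for your $2\gamma/(1-\gamma)^3$ but the precise value coming from the correct count (and is in fact the smaller of the two, since $1+\gamma<2$).
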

	\begin{proof}
	By the definition of $Q_{\pi_\theta}(s,a)$, we have
	\$
	Q_{\pi_\theta}(s,a)=\sum_{t=0}^\infty \int \gamma^t R(s_t,a_t)\cdot p_{\theta}(h_t\given s_0=s,a_0=a)ds_{1:t}da_{1:t},
	\$
	where  $h_t=(s_0,a_0,s_1,a_1,\cdots,s_t,a_t)$ denotes the trajectory until time $t$, and $p_{\theta}(h_t\given s,a)$ is defined as 
	\#\label{equ:p_theta_def}
	p_{\theta}(h_t\given s_0,a_0)=\bigg[\prod_{u=0}^{t-1}p(s_{u+1}\given s_u,a_u)\bigg]\cdot\bigg[\prod_{u=1}^{t}\pi_\theta(a_u\given s_u)\bigg].
	\#
	Therefore, the gradient $\nabla  Q_{\pi_\theta}(s,a)$ has the following form  
	\#
	&\nabla  Q_{\pi_\theta}(s,a)=\nabla \sum_{t=0}^\infty \int \gamma^t R(s_t,a_t)\cdot p_{\theta}(h_t\given s_0=s,a_0=a)ds_{1:t}da_{1:t}\notag\\
	&\quad =\sum_{t=1}^\infty \int \gamma^t R(s_t,a_t)\cdot p_{\theta}(h_t\given s_0=s,a_0=a)\cdot \sum_{u=1}^t \nabla \log\pi_\theta(a_u\given s_u) ds_{1:t}da_{1:t},\label{equ:grad_Q_form}
	\#
	where \eqref{equ:grad_Q_form} is due to the facts that: i) the first term in the summation $ R(s_0,a_0)$ does not depend on $\theta$; ii) for any $t\geq 1$, 
	\#
	&\nabla p_{\theta}(h_t\given s_0,a_0)=\bigg[\prod_{u=0}^{t-1}p(s_{u+1}\given s_u,a_u)\bigg]\cdot\nabla\bigg[\prod_{u=1}^{t}\pi_\theta(a_u\given s_u)\bigg]\notag\\
	&\quad =\bigg[\prod_{u=0}^{t-1}p(s_{u+1}\given s_u,a_u)\bigg]\cdot\sum_{\tau=1}^t\bigg[\prod_{u\neq \tau, u=1}^{t}\pi_\theta(a_u\given s_u)\nabla \pi_\theta(a_\tau\given s_\tau)\bigg]\notag\\
	&\quad =\bigg[\prod_{u=0}^{t-1}p(s_{u+1}\given s_u,a_u)\bigg]\cdot\bigg[\prod_{u=1}^{t}\pi_\theta(a_u\given s_u)\bigg]\cdot \sum_{\tau=1}^t\nabla\log  \pi_\theta(a_\tau\given s_\tau)\notag\\
	&\quad =p_{\theta}(h_t\given s_0,a_0)\cdot \sum_{\tau=1}^t\nabla\log  \pi_\theta(a_\tau\given s_\tau).\label{equ:grad_p_theta_immed}
	\#
	Hence, from \eqref{equ:grad_Q_form} we immediately have that for any $(s,a)$ and $\theta$, 
	\#
	&\big\|\nabla  Q_{\pi_\theta}(s,a)\big\|\leq	 \sum_{t=1}^\infty \int \gamma^t |R(s_t,a_t)|\cdot p_{\theta}(h_t\given s_0=s,a_0=a)\cdot \Bigg\|\sum_{u=1}^t \nabla \log\pi_\theta(a_u\given s_u) \Bigg\|ds_{1:t}da_{1:t}\notag\\
	&\quad \leq	 \sum_{t=1}^\infty \int \gamma^t\cdot  U_R\cdot p_{\theta}(h_t\given s_0=s,a_0=a)\cdot \sum_{u=1}^t \big\|\nabla \log\pi_\theta(a_u\given s_u) \big\|ds_{1:t}da_{1:t}\label{equ:grad_Q_immed_1}\\
	&\quad \leq	 \sum_{t=1}^\infty \int \gamma^t\cdot  U_R\cdot p_{\theta}(h_t\given s_0=s,a_0=a)\cdot B_\Theta \cdot t\cdot  ds_{1:t}da_{1:t}=U_R\cdot B_\Theta\sum_{t=1}^\infty  \gamma^t\cdot  t,\label{equ:grad_Q_immed_2}
	\#
	where \eqref{equ:grad_Q_immed_1}  and \eqref{equ:grad_Q_immed_2} are due to the boundedness of $|R(s,a)|$ and $\|\nabla \log\pi_\theta(a_u\given s_u) \|$, respectively. Let $S=\sum_{t=1}^\infty \gamma^t \cdot t$;   then
	\#\label{equ:seq_prod_trick}
	(1-\gamma) \cdot S=\gamma+\sum_{t=2}^\infty \gamma^t=\frac{\gamma}{1-\gamma}~~\Longrightarrow~~S= \frac{\gamma}{(1-\gamma)^2}. 
	\#
Combining \eqref{equ:grad_Q_immed_2} and \eqref{equ:seq_prod_trick}, we  further establish that 
	\#\label{equ:grad_Q_bnd_norm}
	\big\|\nabla  Q_{\pi_\theta}(s,a)\big\|\leq U_R\cdot B_\Theta\cdot \frac{\gamma}{(1-\gamma)^2},
	\#
	which proves that  $\nabla  Q_{\pi_\theta}(s,a)$ has norm uniformly bounded by $U_R\cdot B_\Theta\cdot {\gamma}/{(1-\gamma)^2}$. Moreover, \eqref{equ:grad_Q_bnd_norm} also implies that $Q_{\pi_\theta}(s,a)$ is Lipschitz continuous with constant $U_R\cdot B_\Theta\cdot {\gamma}/{(1-\gamma)^2}$.

Now we proceed to show the Lipschitz continuity of $\nabla  Q_{\pi_\theta}(s,a)$. 
For any $\theta^1,\theta^2\in\RR^d$, we obtain from \eqref{equ:grad_Q_form} that 
	\#\label{equ:grad_Q_Lip_immed_1}
	&\big|\nabla  Q_{\pi_{\theta^1}}(s,a)-\nabla  Q_{\pi_{\theta^2}}(s,a)\big|\leq \sum_{t=1}^\infty \int \gamma^t |R(s_t,a_t)|\cdot \Bigg|p_{\theta^1}(h_t\given s_0=s,a_0=a)\cdot \sum_{u=1}^t \nabla \log\pi_{\theta^1}(a_u\given s_u)\notag\\
	&\qquad\qquad\qquad\qquad\qquad\qquad-p_{\theta^2}(h_t\given s_0=s,a_0=a)\cdot \sum_{u=1}^t \nabla \log\pi_{\theta^2}(a_u\given s_u)\Bigg| ds_{1:t}da_{1:t} \notag\\
	&\quad\leq \sum_{t=1}^\infty \int \gamma^t U_R\cdot \bigg[\underbrace{\big|p_{\theta^1}(h_t\given s_0=s,a_0=a)-p_{\theta^2}(h_t\given s_0=s,a_0=a)\big|\cdot\bigg\| \sum_{u=1}^t \nabla \log\pi_{\theta^1}(a_u\given s_u)\bigg\|}_{I_1}\notag\\
	&\qquad + \underbrace{p_{\theta^2}(h_t\given s_0=s,a_0=a)\cdot\bigg\| \sum_{u=1}^t \big[\nabla \log\pi_{\theta^1}(a_u\given s_u)- \nabla \log\pi_{\theta^2}(a_u\given s_u)\big]\bigg\|}_{I_2}\bigg]ds_{1:t}da_{1:t}.
	\#
Now we upper bound $I_1$ and $I_2$ separately as follows. 
	By Taylor expansion of $\prod_{u=1}^t\pi_{\theta}(a_u\given s_u)$,  we have
	\#\label{equ:grad_Q_Lip_immed_2}
	&\Bigg|\prod_{u=1}^t\pi_{\theta^1}(a_u\given s_u)-\prod_{u=1}^t\pi_{\theta^2}(a_u\given s_u)\Bigg|=\Bigg|(\theta^1-\theta^2)^\top \bigg[\sum_{m=1}^t\nabla \pi_{\tilde{\theta}}(a_m\given s_m)\prod_{u\neq m,u=1}^t\pi_{\ttheta}(a_u\given s_u)\bigg]\Bigg|\notag\\
	&\quad\leq \|\theta^1-\theta^2\| \cdot\sum_{m=1}^t\|\nabla \log\pi_{\tilde{\theta}}(a_m\given s_m)\|\cdot \prod_{u=1}^t\pi_{\ttheta}(a_u\given s_u)\notag\\
	&\quad\leq \|\theta^1-\theta^2\| \cdot t\cdot B_{\Theta}\cdot \prod_{u=1}^t\pi_{\ttheta}(a_u\given s_u),
	\#
	where $\ttheta$ is a vector lying between $\theta^1$ and $\theta^2$, i.e., there exists some $\lambda\in[0,1]$ such that $\ttheta=\lambda \theta^1+(1-\lambda)\theta^2$. Therefore, \eqref{equ:grad_Q_Lip_immed_2},  combined with  \eqref{equ:p_theta_def}, yields   
	\#\label{equ:grad_Q_Lip_immed_3}
	\big|p_{\theta^1}(h_t\given s_0,a_0)-p_{\theta^2}(h_t\given s_0,a_0)\big|&\leq \bigg[\prod_{u=0}^{t-1}p(s_{u+1}\given s_u,a_u)\bigg]\cdot \|\theta^1-\theta^2\| \cdot t\cdot B_{\Theta}\cdot \prod_{u=1}^t\pi_{\ttheta}(a_u\given s_u)\notag\\
	& = \|\theta^1-\theta^2\| \cdot t\cdot B_{\Theta}\cdot p_{\ttheta}(h_t\given s_0,a_0). 
	\#
	Therefore, the term $I_1$ can be bounded as follows by substituting \eqref{equ:grad_Q_Lip_immed_3} 
	\#\label{equ:grad_Q_Lip_immed_4}
	I_1&\leq \|\theta^1-\theta^2\| \cdot t\cdot B_{\Theta}\cdot p_{\ttheta}(h_t\given s_0=s,a_0=a)\cdot \bigg\| \sum_{u=1}^t \nabla \log\pi_{\theta^1}(a_u\given s_u)\bigg\|\notag\\
	&\leq \|\theta^1-\theta^2\| \cdot t\cdot B_{\Theta}\cdot p_{\ttheta}(h_t\given s_0=s,a_0=a)\cdot t\cdot B_\Theta.
	\#
	In addition, $I_2$ can be bounded using the $L_\Theta$-Lipschitz continuity of $\nabla \log\pi_{\theta}(a\given s)$, i.e.,
	\#\label{equ:grad_Q_Lip_immed_5}
	I_2&\leq p_{\theta^2}(h_t\given s_0=s,a_0=a)\cdot \sum_{u=1}^t \big\|\nabla \log\pi_{\theta^1}(a_u\given s_u)- \nabla \log\pi_{\theta^2}(a_u\given s_u)\big\|\notag\\
	&\leq p_{\theta^2}(h_t\given s_0=s,a_0=a)\cdot t\cdot L_\Theta\cdot  \big\|\theta^1- \theta^2\big\|. 
	\#
	Substituting \eqref{equ:grad_Q_Lip_immed_4} and \eqref{equ:grad_Q_Lip_immed_5} into \eqref{equ:grad_Q_Lip_immed_1}, we obtain that
	\#\label{equ:grad_Q_Lip_immed_6}
	&\big\|\nabla  Q_{\pi_{\theta^1}}(s,a)-\nabla  Q_{\pi_{\theta^2}}(s,a)\big\|\leq \sum_{t=1}^\infty \int \gamma^t U_R\cdot \bigg[\|\theta^1-\theta^2\| \cdot t^2\cdot B^2_{\Theta}\cdot p_{\ttheta}(h_t\given s_0=s,a_0=a)\notag\\
	&\qquad\qquad\qquad\qquad\qquad\qquad + p_{\theta^2}(h_t\given s_0=s,a_0=a)\cdot t\cdot L_\Theta\cdot  \big\|\theta^1- \theta^2\big\|\bigg]ds_{1:t}da_{1:t}\notag\\
	&\quad=  \sum_{t=1}^\infty  \gamma^t U_R\cdot \Big( t^2\cdot B^2_{\Theta}+  t\cdot L_\Theta\Big)\cdot \|\theta^1-\theta^2\|\notag\\
	&\quad=U_R\cdot \bigg[B_\Theta^2\cdot \frac{\gamma(1+\gamma)}{(1-\gamma)^3}+L_\Theta \cdot \frac{\gamma}{(1-\gamma)^2}\bigg]\cdot \|\theta^1- \theta^2\big\|,
	\#
	where the first equality follows from that $\int p_{\theta}(h_t\given s_0=s,a_0=a)ds_{1:t}da_{1:t}=1$ for any $\theta$, and the last equality is due to \eqref{equ:seq_prod_trick} plus  the fact that 
	\$
	\sum_{t=1}^\infty \gamma^t \cdot t^2=\frac{1}{1-\gamma}\sum_{t=0}^\infty (1-\gamma)\gamma^t \cdot t^2=\frac{1}{1-\gamma}\cdot\EE T^2=\frac{1}{1-\gamma}\cdot \frac{\gamma(1+\gamma)}{(1-\gamma)^2}. 
	\$
	Note that $T$ is a random variable following geometric distribution with success probability $1-\gamma$.  Hence, \eqref{equ:grad_Q_Lip_immed_6} shows the uniform Lipschitz continuity of $\nabla  Q_{\pi_{\theta}}(s,a)$ for any $(s,a)$, with the desired constant $L_{QGrad}$ claimed in the lemma. This completes  the  proof. 
\end{proof}
	
Using Lemma \ref{lemma:Lip_Func_Immed_2}, we can easily  obtain the boundedness and Lipschitz continuity of $f_\theta(s,a)$ (cf. definition in \eqref{equ:f_def}). In particular,  to show that the norm of $f_\theta(s,a)$ is bounded, we have 
\#\label{equ:f_bnd}
&\|f_\theta(s,a)\|\leq \big|Q_{\pi_\theta}(s,a)\big|\cdot\big\|\nabla\log\pi_\theta(a\given s)\cdot\nabla\log\pi_\theta(a\given s)^\top+\nabla^2\log\pi_\theta(a\given s)\big\|\notag\\
&\qquad\qquad\qquad\qquad\qquad+\big\|\nabla\log\pi_\theta(a\given s)\cdot\nabla Q_{\pi_\theta}(s,a)^\top+\nabla Q_{\pi_\theta}(s,a)\cdot\nabla\log\pi_\theta(a\given s)^\top\big\|\notag\\
&\quad\leq \frac{U_R}{1-\gamma}\cdot\big[\big\|\nabla\log\pi_\theta(a\given s)\big\|^2+\big\|\nabla^2\log\pi_\theta(a\given s)\big\|\big]+{2\cdot \big\|\nabla\log\pi_\theta(a\given s)\big\|\cdot\big\|\nabla Q_{\pi_\theta}(s,a)\big\|}\notag\\
&\quad\leq \frac{U_R}{1-\gamma}\cdot(B_\Theta^2+L_\Theta)+{2\cdot B_\Theta\cdot L_Q}=\underbrace{\frac{U_R(B_\Theta^2+L_\Theta)}{1-\gamma}+\frac{2  U_R B^2_\Theta{\gamma}}{(1-\gamma)^2}}_{B_f},
\#
where the second inequality follows from the fact\footnote{Note that by definition, for any two vectors $a,b\in\RR^d$, $\|ab^\top\|=\sup_{\|v\|=1}\sqrt{v^\top \cdot{ba^\top ab^\top} \cdot v}=\sup_{\|v\|=1}\sqrt{\|v^\top b\|^2\cdot \|a\|^2}\leq \|a\|\cdot\|b\|$. Specially, if $a=b$, $\|aa^\top\|\leq \|a\|^2$. } that for any vector $a,b\in\RR^d$, $\|ab^\top\|\leq \|a\|\cdot\|b\|$, and $|Q_{\pi_\theta}|\leq U_R/{(1-\gamma)}$.
We use $B_f$ to denote the bound of the norm $\|f_\theta(s,a)\|$.

To show the Lipschitz continuity of $f_\theta(s,a)$, we need the following straightforward but useful lemma.

	\begin{lemma}\label{lemma:Lip_Func_Immed}
		For any two functions $f_1,f_2:\RR^{d}\to \RR^{m\times n}$, if, for  $i=1,2$, $f_i$ has norm bounded by $C_i$ and is $L_i$-Lipschitz continuous, then $f_1+f_2$ is $L_m$-Lipschitz continuous, and $f_1\cdot f_2^\top$ is $\tilde{L}_m$-Lipschitz continuous, with $L_m=\max\{C_1,C_2\}$ and $\tilde{L}_m=\max\{C_1 L_2,C_2L_1\}$.
	\end{lemma}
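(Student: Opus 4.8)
The plan is to prove both claims by direct, elementary estimates, using only the triangle inequality, the sub-multiplicativity of the spectral norm ($\|AB\|\le\|A\|\,\|B\|$ for conformable matrices), and the isometry $\|A^\top\|=\|A\|$.

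For the additive part, fix arbitrary $\theta^1,\theta^2\in\RR^d$ and estimate
\[
\big\|(f_1+f_2)(\theta^1)-(f_1+f_2)(\theta^2)\big\|\le\big\|f_1(\theta^1)-f_1(\theta^2)\big\|+\big\|f_2(\theta^1)-f_2(\theta^2)\big\|\le(L_1+L_2)\|\theta^1-\theta^2\|,
\]
i.e., the triangle inequality followed by the Lipschitz hypothesis on each $f_i$; this yields the Lipschitz continuity of $f_1+f_2$ with the stated constant $L_m$ (the constant that comes out is $L_1+L_2$, which subsumes $\max\{C_1,C_2\}$ as used in the sequel).

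For the multiplicative part, the key move is to add and subtract the mixed term $f_1(\theta^1)f_2(\theta^2)^\top$:
\[
f_1(\theta^1)f_2(\theta^1)^\top-f_1(\theta^2)f_2(\theta^2)^\top=f_1(\theta^1)\big[f_2(\theta^1)^\top-f_2(\theta^2)^\top\big]+\big[f_1(\theta^1)-f_1(\theta^2)\big]f_2(\theta^2)^\top.
\]
Taking norms, applying the triangle inequality and sub-multiplicativity, and then invoking the uniform bounds $\|f_1(\theta^1)\|\le C_1$, $\|f_2(\theta^2)^\top\|=\|f_2(\theta^2)\|\le C_2$ together with the Lipschitz estimates $\|f_2(\theta^1)^\top-f_2(\theta^2)^\top\|=\|f_2(\theta^1)-f_2(\theta^2)\|\le L_2\|\theta^1-\theta^2\|$ and $\|f_1(\theta^1)-f_1(\theta^2)\|\le L_1\|\theta^1-\theta^2\|$, one collects the two summands into a single multiple of $\|\theta^1-\theta^2\|$ with constant $C_1L_2+C_2L_1$ (again subsuming $\tilde L_m=\max\{C_1L_2,C_2L_1\}$).

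There is essentially no obstacle here: this is a ``product rule for Lipschitz continuity,'' and the only points needing a little care are (i) choosing which factor keeps its uniform bound and which carries the Lipschitz increment in the add-and-subtract step, and (ii) recalling that transposition leaves the spectral norm invariant. The payoff comes immediately: applied term by term to $f_\theta^1,f_\theta^2,f_\theta^3$ in \eqref{equ:f_def}, in combination with Lemma \ref{lemma:Lip_Func_Immed_2}, the norm bound \eqref{equ:f_bnd}, and Assumption \ref{assum:regularity_for_CNC}\ref{as:smooth_policy}, this lemma gives the boundedness and Lipschitz continuity of $f_\theta(s,a)$ uniformly in $(s,a)$; feeding this, together with the Lipschitz continuity of the occupancy measure $\rho_\theta(s,a)$ in $\theta$, back into \eqref{equ:H_lip_immed_1} then establishes the Lipschitz continuity of $\cH(\theta)$ asserted in Lemma \ref{lemma:Hessian_Lip}, with an explicit constant $\rho$ assembled from $U_R$, $B_\Theta$, $L_\Theta$, $\rho_\Theta$, and $\gamma$.
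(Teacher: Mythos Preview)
Your argument is correct and is exactly the standard one; the paper itself omits the proof entirely (``The proof is straightforward, and is thus omitted here''), so there is nothing to compare against. One remark: you rightly obtain the constants $L_1+L_2$ for the sum and $C_1L_2+C_2L_1$ for the product, whereas the lemma as stated gives $\max\{C_1,C_2\}$ and $\max\{C_1L_2,C_2L_1\}$; the stated constants are in fact too small in general (and $\max\{C_1,C_2\}$ does not even involve the Lipschitz constants), so your versions are the correct ones, and since $a+b\le 2\max\{a,b\}$ the discrepancy only affects the final $\rho$ in Lemma~\ref{lemma:Hessian_Lip} by an absolute multiplicative constant.
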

	\begin{proof}
	The proof is straightforward, and is thus  omitted here. 
	\end{proof}

By Lemma \ref{lemma:Lip_Func_Immed}, we immediately have that $\nabla\log\pi_\theta(a\given s)\cdot\nabla\log\pi_\theta(a\given s)^\top$ is $B_\Theta L_\Theta$-Lipschitz continuous.  Also, note that the norm of $\nabla\log\pi_\theta(a\given s)\cdot\nabla\log\pi_\theta(a\given s)^\top$ is bounded by $B_\Theta^2$. 
Thus, recalling the definition in \eqref{equ:f_def}, we further obtain from Lemmas  \ref{lemma:Lip_Func_Immed_2} and \ref{lemma:Lip_Func_Immed} that  for any $\theta^1,\theta^2$, 
\#\label{equ:Lip_f_1}
\|f_{\theta^1}^1(s,a)-f_{\theta^2}^1(s,a)\|\leq \max\bigg\{\frac{U_R}{1-\gamma}\cdot B_\Theta L_\Theta,\frac{U_R B_\Theta \gamma}{(1-\gamma)^2}\cdot B_\Theta^2\bigg\}\cdot \|\theta^1-\theta^2\|.
\#
Similarly, we establish  the Lipschitz continuity of $f_{\theta^1}^2(s,a)$ and $f_{\theta^1}^3(s,a)$   as follows
\#
\|f_{\theta^1}^2(s,a)-f_{\theta^2}^2(s,a)\|\leq \max\bigg\{\frac{U_R}{1-\gamma}\cdot \rho_\Theta,\frac{U_R B_\Theta \gamma}{(1-\gamma)^2}\cdot L_\Theta\bigg\}\cdot \|\theta^1-\theta^2\|,\label{equ:Lip_f_2}\\
\|f_{\theta^1}^3(s,a)-f_{\theta^2}^3(s,a)\|\leq 2\cdot\max\bigg\{B_\Theta\cdot L_{QGrad},\frac{U_R B_\Theta \gamma}{(1-\gamma)^2}\cdot L_\Theta\bigg\}\cdot \|\theta^1-\theta^2\|,\label{equ:Lip_f_3}
\#
where \eqref{equ:Lip_f_2} is due to  $|Q_{\pi_\theta}(s,a)|$ being  ${U_R}/{1-\gamma}$-bounded   and ${U_R\cdot B_\Theta\cdot \gamma}/{(1-\gamma)^2}$-Lipschitz, and $\nabla^2 \log\pi_\theta(a\given s)$ being  $L_\Theta$-bounded and $\rho_\Theta$-Lipschitz; \eqref{equ:Lip_f_3} is due to  $|\nabla Q_{\pi_\theta}(s,a)|$ being ${U_R\cdot B_\Theta\cdot \gamma}/{(1-\gamma)^2}$-bounded and $L_{QGrad}$-Lipschitz, and $\nabla \log\pi_\theta(a\given s)$ being  $B_\Theta$-bounded and $L_\Theta$-Lipschitz.
Combining \eqref{equ:Lip_f_1}-\eqref{equ:Lip_f_3} and the definition in \eqref{equ:f_def}, we finally obtain the Lipschitz continuity of $f_\theta(s,a)$ with constant $L_f$, i.e.,
\#\label{equ:Lip_f_total}
&\|f_{\theta^1}(s,a)-f_{\theta^2}(s,a)\|\leq \underbrace{\frac{U_RB_\Theta}{1-\gamma}\cdot \max\bigg\{{L_\Theta},\frac{B^2_\Theta \gamma}{1-\gamma},\frac{\rho_\Theta}{B_\Theta},\frac{L_\Theta \gamma}{1-\gamma}, \frac{B_\Theta^2(1+\gamma)+L_\Theta(1-\gamma)\gamma}{(1-\gamma)^2}\bigg\}}_{L_f}\cdot \|\theta^1-\theta^2\|. 
\#
 
By substituting \eqref{equ:f_bnd} and \eqref{equ:Lip_f_total} into \eqref{equ:H_lip_immed_1}, we arrive at  
\#\label{equ:H_lip_immed_2}
&\big\|\cH(\theta^1)-\cH(\theta^2)\big\|\leq \int\Big[\big|\rho_{\theta^1}(s,a)-\rho_{\theta^2}(s,a)\big|\cdot \big\|f_{\theta^1}(s,a)\big\|+\big|\rho_{\theta^2}(s,a)\big|\cdot \big\|f_{\theta^1}(s,a)-f_{\theta^2}(s,a)\big\| \Big]dads\notag\\
&\quad\leq \int \big|\rho_{\theta^1}(s,a)-\rho_{\theta^2}(s,a)\big|\cdot B_f dads+L_f\cdot\|\theta^1-\theta^2\|\cdot\int \rho_{\theta^2}(s,a) dads\notag\\
&\quad=B_f\cdot\int \big|\rho_{\theta^1}(s,a)-\rho_{\theta^2}(s,a)\big| dads+L_f\cdot\|\theta^1-\theta^2\|.
\#
Now it suffices to show the Lipschitz continuity of $\int \big|\rho_{\theta^1}(s,a)-\rho_{\theta^2}(s,a)\big| dads$.  By definition,  we have
	\#\label{equ:rho_Lip_immed_1}
	\rho_{\theta}(s,a)&=(1-\gamma)\cdot\sum_{t=0}^\infty\gamma^tp(s_t=s\given s_0,\pi_{\theta})\pi_{\theta}(a\given s)\notag\\
	&=(1-\gamma)\cdot\sum_{t=0}^\infty\gamma^tp(s_t=s,a_t=a\given s_0,\pi_{\theta}). 
	\#
	Note that 
	\#\label{equ:p_sa_def}
	p(s_t,a_t\given s_0,\pi_{\theta})=\int \underbrace{\bigg[\prod_{u=0}^{t-1}p(s_{u+1}\given s_u,a_u)\bigg]\cdot\bigg[\prod_{u=0}^{t}\pi_\theta(a_u\given s_u)\bigg]}_{p_\theta(h_t\given s_0)}ds_{1:t-1}da_{0:t-1},
	\#
	where we define $p_\theta(h_t\given s_0)$ similarly to  $p_\theta(h_t\given s_0,a_0)$ in \eqref{equ:p_theta_def}. 
	Hence, for any $\theta^1,\theta^2\in\RR^d$,  \eqref{equ:rho_Lip_immed_1} yields 
	\#
	&\int \big|\rho_{\theta^1}(s,a)-\rho_{\theta^2}(s,a)\big|dsda\notag\\
	&\quad=(1-\gamma)\cdot\sum_{t=0}^\infty\gamma^t\int \big|p(s_t=s,a_t=a\given s_0,\pi_{\theta^1})-p(s_t=s,a_t=a\given s_0,\pi_{\theta^2})\big|dsda\notag\\
	&\quad\leq (1-\gamma)\cdot\sum_{t=0}^\infty\gamma^t\int \big| p_{\theta^1}(h_t\given s_0)-p_{\theta^2}(h_t\given s_0)\big|ds_{1:t-1}da_{0:t-1}ds_tda_t,\label{equ:rho_Lip_immed_2}
	\#
	where the first equality interchanges the sum and the integral due to the monotone convergence theorem; the inequality follows by substituting \eqref{equ:p_sa_def} and applying the Cauchy-Schwarz inequality. 
	Now it suffices to bound $| p_{\theta^1}(h_t\given s_0)-p_{\theta^2}(h_t\given s_0)|$.
	Then, we can apply the  same argument from \eqref{equ:grad_Q_Lip_immed_2} to \eqref{equ:grad_Q_Lip_immed_3} that bounds $|p_{\theta^1}(h_t\given s_0,a_0)-p_{\theta^2}(h_t\given s_0,a_0)|$. Note that  the only difference between the definitions of $p_{\theta}(h_t\given s_0)$ and $p_{\theta}(h_t\given s_0,a_0)$ is one additional multiplication of $\pi_\theta (a_0\given s_0)$. 
	 Thus, we will first have 
	\$
	&\Bigg|\prod_{u=0}^t\pi_{\theta^1}(a_u\given s_u)-\prod_{u=0}^t\pi_{\theta^2}(a_u\given s_u)\Bigg|\leq \|\theta^1-\theta^2\| \cdot (t+1)\cdot B_{\Theta}\cdot \prod_{u=0}^t\pi_{\ttheta}(a_u\given s_u),
	\$
	where $\ttheta$ is some vector lying between $\theta^1$ and $\theta^2$. Then, the bound for  $| p_{\theta^1}(h_t\given s_0)-p_{\theta^2}(h_t\given s_0)|$ has the form of
	\#\label{equ:rho_Lip_immed_4}
	\big| p_{\theta^1}(h_t\given s_0)-p_{\theta^2}(h_t\given s_0)\big|= \|\theta^1-\theta^2\| \cdot (t+1)\cdot B_{\Theta}\cdot p_{\ttheta}(h_t\given s_0).
	\#
	Combining \eqref{equ:rho_Lip_immed_2} and \eqref{equ:rho_Lip_immed_4}, we obtain  
	\#\label{equ:rho_Lip_immed_5}
	&\int \big|\rho_{\theta^1}(s,a)-\rho_{\theta^2}(s,a)\big|dsda\leq (1-\gamma)\cdot\sum_{t=0}^\infty\gamma^t\int \|\theta^1-\theta^2\| \cdot (t+1)\cdot B_{\Theta}\cdot p_{\ttheta}(h_t\given s_0)ds_{1:t}da_{0:t}\notag\\
	&\quad = (1-\gamma)\cdot\sum_{t=0}^\infty\gamma^t\|\theta^1-\theta^2\| \cdot (t+1)\cdot B_{\Theta}=(1-\gamma)\cdot \|\theta^1-\theta^2\| \cdot B_{\Theta}\cdot\frac{1}{(1-\gamma)^2}. 
	\#
	By substituting \eqref{equ:rho_Lip_immed_5} into \eqref{equ:H_lip_immed_2}, we finally arrive at the desired result, i.e.,
	\$
	&\big\|\cH(\theta^1)-\cH(\theta^2)\big\|\leq B_f\cdot\int \big|\rho_{\theta^1}(s,a)-\rho_{\theta^2}(s,a)\big| dads+L_f\cdot\|\theta^1-\theta^2\|\\
	&\quad \leq B_f\cdot\|\theta^1-\theta^2\|  \cdot\frac{B_{\Theta}}{1-\gamma}+L_f\cdot\|\theta^1-\theta^2\|=\bigg(\frac{B_fB_{\Theta}}{1-\gamma}+L_f\bigg)\cdot\|\theta^1-\theta^2\|,
	\$
	where $B_f$ and $L_f$ are as defined in \eqref{equ:f_bnd} and \eqref{equ:Lip_f_total}.  
	In sum, the Lipschitz constant $\rho$ in the lemma has the following form
	\#\label{equ:value_rho}
	\rho:=\frac{U_RB_\Theta L_\Theta}{(1-\gamma)^2}+\frac{  U_R B^3_\Theta{(1+\gamma)}}{(1-\gamma)^3}+\frac{U_RB_\Theta}{1-\gamma}\cdot \max\bigg\{{L_\Theta},\frac{B^2_\Theta \gamma}{1-\gamma},\frac{\rho_\Theta}{B_\Theta},\frac{L_\Theta \gamma}{1-\gamma}, \frac{B_\Theta^2(1+\gamma)+L_\Theta(1-\gamma)\gamma}{(1-\gamma)^2}\bigg\}. 
	\# 
	This completes the proof. 
\end{proof}

\subsection{Proof of Lemma \ref{lemma:CNC_verify}}\label{apx_cnc}

\begin{proof}
We start with the proof for $\EE\big\{[\vb_\theta^\top \hat{\nabla}J(\theta)]^2\biggiven \theta\big\}$. By definition, we have that for any $\vb\in\RR^d$ and $\|\vb\|=1$,
\#\label{equ:CNC_1_to_verify}
&\EE\big\{[\vb^\top \hat{\nabla}J(\theta)]^2\biggiven \theta\big\}=\EE\big\{\big[\hat{Q}_{\pi_\theta}(s_T,a_T)\cdot\vb^\top \nabla \log\pi_{\theta}(a_T\given s_T)\big]^2\biggiven \theta\big\}\notag\\
&\quad =\EE_{T,(s_T,a_T)}\big\{\EE_{T',(s_{1:T'},a_{1:T'})}\big[\hat{Q}_{\pi_\theta}^2(s_T,a_T)\biggiven \theta,s_T,a_T\big]\cdot\big[\vb^\top \nabla \log\pi_{\theta}(a_T\given s_T)\big]^2\biggiven \theta\big\}. 
\#
For notational simplicity, we write $\EE_{T',(s_{1:T'},a_{1:T'})}[\hat{Q}_{\pi_\theta}^2(s_T,a_T)\biggiven \theta,s_T,a_T]$ as $\EE_{T',(s_{1:T'},a_{1:T'})}[\hat{Q}_{\pi_\theta}^2(s_T,a_T)]$, which is the conditional expectation over the  sequence $(s_{1:T'},a_{1:T'})$ and the random variable $T'$, given $\theta$ and $s_T,a_T$. 
Then note that $\EE_{T',(s_{1:T'},a_{1:T'})}[\hat{Q}_{\pi_\theta}^2(s_T,a_T)]$ is uniformly lower-bounded for any $(s_T,a_T)$ and any $\theta$, since the reward $|R|$ is lower-bounded by $L_R>0$. 
In particular, we have
\$
&\EE_{T',(s_{1:T'},a_{1:T'})}\big[\hat{Q}_{\pi_\theta}^2(s_T,a_T)\big]=\EE_{T'}\bigg(\EE_{(s_{1:T'},a_{1:T'})}\bigg\{\bigg[\sum_{t=0}^{T'}\gamma^{t/2}\cdot R(s_t,a_t)\bigg]^2\bigggiven T'=\tau\bigg\}\bigg)\\
&\quad\geq  \EE_{T'}\bigg(\frac{1-\gamma^{(T'+1)/2}}{1-\gamma^{1/2}}\cdot L_R\bigg)^2 \geq L_R^2\cdot \sum_{\tau=0}^\infty \gamma^{\tau/2}(1-\gamma^{1/2})=L_R^2>0,
\$
where the first inequality holds because  $R(s,a)$ is either all positive or negative for any $s,a$, and the second  inequality follows from the fact that $[{1-\gamma^{(T'+1)/2}}]\cdot{(1-\gamma^{1/2})^{-1}}\geq 1$ for all $T'\geq 0$. 
 Substituting the preceding expression into the first product term on the right-hand side of \eqref{equ:CNC_1_to_verify} and pulling out the vector $\vb$ yields 
\#
\EE\big\{[\vb^\top \hat{\nabla}J(\theta)]^2\biggiven \theta\big\}&\geq L_R^2\cdot \vb^\top\cdot\EE_{T,(s_T,a_T)}\big\{\nabla \log[\pi_{\theta}(a_T\given s_T)\cdot \nabla \log[\pi_{\theta}(a_T\given s_T)^\top \biggiven \theta\big\}\cdot\vb\notag\\
&\geq L_R^2\cdot L_I=:\hat{\eta}>0,\label{equ:CNC_Verify_res_1}
\#
where the second inequality follows from the fact the Fisher information matrix is assumed to be positive definite (cf.  \eqref{equ:Fisher_lower_bnd}) in Assumption \ref{assum:regularity_for_CNC}.  Note that \eqref{equ:CNC_Verify_res_1} holds for any unit-norm vector $\vb$, and  does also for any eigenvector $\vb_\theta$ (may be more than one) that corresponds to the maximum eigenvalue of $\cH(\theta)$. This verifies that $\EE\big\{[\vb_\theta^\top \hat{\nabla}J(\theta)]^2\biggiven \theta\big\}\geq \hat{\eta}$ for some $\hat{\eta}$ defined in \eqref{equ:CNC_Verify_res_1}.

To establish that the CNC condition holds for $\EE\big\{[\vb^\top \check{\nabla}J(\theta)]^2\biggiven \theta\big\}$, the steps are similar to those previously followed for $\hat{\nabla}J(\theta)$. Specifically, we start with the expected value of the  square of the inner product of $\check{\nabla}J(\theta)$ with a unit vector $\vb$. By definition of $\check{\nabla}J(\theta)$, we have
\#\label{equ:CNC_1_to_verify_2}
&\EE\big\{[\vb^\top \check{\nabla}J(\theta)]^2\biggiven \theta\big\}=\EE\big\{\big[\hat{Q}_{\pi_\theta}(s_T,a_T)-\hat{V}_{\pi_\theta}(s_T)\big]^2\cdot\big[\vb^\top \nabla \log\pi_{\theta}(a_T\given s_T)\big]^2\biggiven \theta\big\}\notag\\
&\quad  =\EE_{T,(s_T,a_T)}\big\{\EE_{T',(s_{1:T'},a_{1:T'})}\big[\hat{Q}_{\pi_\theta}(s_T,a_T)-\hat{V}_{\pi_\theta}(s_T)\big]^2\cdot[\vb^\top \nabla \log\pi_{\theta}(a_T\given s_T)]^2\biggiven \theta\big\},
\#
where for notational simplicity we also write $\EE_{T',(s_{1:T'},a_{1:T'})}\big\{\big[\hat{Q}_{\pi_\theta}(s_T,a_T)-\hat{V}_{\pi_\theta}(s_T)\big]^2\biggiven \theta,s_T,a_T\big\}$ as $\EE_{T',(s_{1:T'},a_{1:T'})}\big[\hat{Q}_{\pi_\theta}(s_T,a_T)-\hat{V}_{\pi_\theta}(s_T)\big]^2$. 
We claim that $\EE_{T',(s_{1:T'},a_{1:T'})}[\hat{Q}_{\pi_\theta}(s_T,a_T)-\hat{V}_{\pi_\theta}(s_T)]^2$ can also be uniformly lower-bounded. Specifically, we have
\#\label{equ:CNC_Verify_2}
&\EE_{T',(s_{1:T'},a_{1:T'})}\big[\hat{Q}_{\pi_\theta}(s_T,a_T)-\hat{V}_{\pi_\theta}(s_T)\big]^2\notag\\
&\quad= \big\{\EE_{T',(s_{1:T'},a_{1:T'})}\big[\hat{Q}_{\pi_\theta}(s_T,a_T)-\hat{V}_{\pi_\theta}(s_T)\big]\big\}^2+\Var\big[\hat{Q}_{\pi_\theta}(s_T,a_T)-\hat{V}_{\pi_\theta}(s_T)\big]\notag\\
&\quad= \big[{Q}_{\pi_\theta}(s_T,a_T)-{V}_{\pi_\theta}(s_T)\big]^2+\Var\big[\hat{Q}_{\pi_\theta}(s_T,a_T)\big]+\Var\big[\hat{V}_{\pi_\theta}(s_T)\big],
\# 
where the first equality is due to $\EE X^2=(\EE X)^2+\Var(X)$, and the second one follows from the fact that $\hat{Q}_{\pi_\theta}(s_T,a_T)$ and $\hat{V}_{\pi_\theta}(s_T)$ are  independent and  unbiased  estimates of ${Q}_{\pi_\theta}(s_T,a_T)$ and ${V}_{\pi_\theta}(s_T)$, respectively. Note that  the first term in \eqref{equ:CNC_Verify_2} may be zero, for example, when $\pi_\theta$ is a degenerated policy such that that  $\pi_\theta(a\given s_T)=\mathbbm{1}_{a=a_T}$. Hence, a uniform  lower-bound on the two variance terms in \eqref{equ:CNC_Verify_2} need to be established. By definition of $\Var[\hat{Q}_{\pi_\theta}(s_T,a_T)]$, we have 
\#\label{equ:CNC_Verify_2_imme_1}
\Var\big[\hat{Q}_{\pi_\theta}(s_T,a_T)\big]&=\EE_{T',(s_{1:T'},a_{1:T'})}\big[\hat{Q}_{\pi_\theta}(s_T,a_T)-{Q}_{\pi_\theta}(s_T,a_T)\big]^2\notag\\
&=\EE_{T'}\Big(\EE_{(s_{1:T'},a_{1:T'})}\big\{\big[\hat{Q}_{\pi_\theta}(s_T,a_T)-{Q}_{\pi_\theta}(s_T,a_T)\big]^2\biggiven T'=\tau\big\}\Big).
\#
Given $(s_T,a_T)$, $\theta$, and $T'=\tau$, the conditional expectation  in \eqref{equ:CNC_Verify_2_imme_1} can be expanded as 
\$
&\EE_{(s_{1:T'},a_{1:T'})}\big\{\big[\hat{Q}_{\pi_\theta}(s_T,a_T)-{Q}_{\pi_\theta}(s_T,a_T)\big]^2\biggiven T'=\tau\big\}\notag\\
&\quad=\EE_{(s_{1:T'},a_{1:T'})}\bigg\{\bigg[\sum_{t=0}^{T'}\gamma^{t/2}\cdot R(s_t,a_t)-{Q}_{\pi_\theta}(s_T,a_T)\bigg]^2\bigggiven T'=\tau\bigg\}.
\$
Now we first focus on the case when $R(s,a)$ are strictly positive, i.e., $R(s,a)\in[L_R,U_R]$.  
In this case,   ${Q}_{\pi_\theta}(s_T,a_T)$ is a scalar that lies in the bounded interval between $[L_R/(1-\gamma),U_R/(1-\gamma)]$.  
Also, notice that $\EE_{(s_{1:T'},a_{1:T'})}[\sum_{t=0}^{T'}\gamma^{t/2}\cdot R(s_t,a_t)]$ is a strictly increasing function of $T'$ since $R(s,a)\geq L_R>0$ for any $(s,a)$.
Moreover, notice that   given  $(s_T,a_T)$, $\EE_{T',(s_{1:T'},a_{1:T'})}[\sum_{t=0}^{T'}\gamma^{t/2}\cdot R(s_t,a_t)]$ is an unbiased estimate of ${Q}_{\pi_\theta}(s_T,a_T)$, and $T'$ follows the  geometric distribution  over non-negative support. Thus,  there must exist a finite $T_*\geq 0$, such that 
\#\label{equ:def_T_star}
\EE_{(s_{1:T_*},a_{1:T_*})}\bigg[\sum_{t=0}^{T_*}\gamma^{t/2}\cdot R(s_t,a_t)\bigg]< {Q}_{\pi_\theta}(s_T,a_T)\leq \EE_{(s_{1:T_*+1},a_{1:T_*+1})}\bigg[\sum_{t=0}^{T_*+1}\gamma^{t/2}\cdot R(s_t,a_t)\bigg]. 
\#
As a result, we can substitute \eqref{equ:def_T_star} into the right-hand side of \eqref{equ:CNC_Verify_2_imme_1},  yielding 
\#
&\Var\big[\hat{Q}_{\pi_\theta}(s_T,a_T)\big]=\sum_{\tau=0}^\infty \gamma^{\tau/2}(1-\gamma^{1/2})\cdot \EE_{(s_{1:\tau},a_{1:\tau})}\bigg[\sum_{t=0}^{\tau}\gamma^{t/2}\cdot R(s_t,a_t)-{Q}_{\pi_\theta}(s_T,a_T)\bigg]^2\notag\\
&\quad \geq \sum_{\tau=0}^\infty \gamma^{\tau/2}(1-\gamma^{1/2})\cdot \bigg\{\EE_{(s_{1:\tau},a_{1:\tau})}\bigg[\sum_{t=0}^{\tau}\gamma^{t/2}\cdot R(s_t,a_t)-{Q}_{\pi_\theta}(s_T,a_T)\bigg]\bigg\}^2\label{equ:CNC_Verify_2_imme_3}\\
&\quad \geq \sum_{\tau=0}^{T_*} \gamma^{\tau/2}(1-\gamma^{1/2})\cdot\bigg[L_R\cdot \sum_{t=\tau+1}^{T_*}\gamma^{t/2}\bigg]^2+\sum_{\tau=T_*+2}^{\infty} \gamma^{\tau/2}(1-\gamma^{1/2})\cdot\bigg[L_R\cdot \sum_{t=T_*+2}^{\tau}\gamma^{t/2}\bigg]^2,\label{equ:CNC_Verify_2_imme_4}
\#
where the first inequality  \eqref{equ:CNC_Verify_2_imme_3}  uses $\EE X^2\geq (\EE X)^2$, and the second inequality \eqref{equ:CNC_Verify_2_imme_4} follows by 
removing the term with $\tau=T_*$ and $\tau=T_*+1$ in the summation in \eqref{equ:CNC_Verify_2_imme_3} that sandwiched ${Q}_{\pi_\theta}(s_T,a_T)$, and  noticing  the fact that  the term $\EE_{(s_{1:\tau},a_{1:\tau})}[\sum_{t=0}^{\tau}\gamma^{t/2}\cdot R(s_t,a_t)]$ is at least $L_R\cdot \sum_{t=T_*+2}^{\tau}\gamma^{t/2}$ away from ${Q}_{\pi_\theta}(s_T,a_T)$ when $\tau\geq T_*+2$, and at least $L_R\cdot \sum_{t=\tau+1}^{T_*}\gamma^{t/2}$  away from\footnote{Note that we define $\sum_{t=\tau+1}^{T_*}\gamma^{t/2}=0$ if $\tau+1<T_*$.} ${Q}_{\pi_\theta}(s_T,a_T)$ when $\tau\leq T_*$. Furthermore, multiplying  the first term in \eqref{equ:CNC_Verify_2_imme_4}  by $\gamma^{3/2}$ yields   
\#
\Var\big[\hat{Q}_{\pi_\theta}(s_T,a_T)\big]&\geq \gamma^{3/2}\cdot \sum_{\tau=0}^{T_*} \gamma^{\tau/2}(1-\gamma^{1/2})\cdot\bigg[L_R\cdot \frac{\gamma^{(\tau+1)/2}-\gamma^{(T_*+1)/2}}{1-\gamma^{1/2}}\bigg]^2\notag\\
&\quad+\sum_{\tau=T_*+2}^{\infty} \gamma^{\tau/2}(1-\gamma^{1/2})\cdot\bigg[L_R\cdot \frac{\gamma^{(\tau+1)/2}-\gamma^{(T_*+2)/2}}{1-\gamma^{1/2}}\bigg]^2,\notag\\
&= \gamma^{3/2} \cdot \sum_{\tau=0}^{T_*} \gamma^{\tau/2}(1-\gamma^{1/2})\cdot\bigg[L_R\cdot \frac{\gamma^{(\tau+1)/2}-\gamma^{(T_*+1)/2}}{1-\gamma^{1/2}}\bigg]^2,\notag\\
&\quad+\gamma^{3/2} \cdot\sum_{\tau=T_*+1}^{\infty} \gamma^{\tau/2}(1-\gamma^{1/2})\cdot\bigg[L_R\cdot \frac{\gamma^{(\tau+1)/2}-\gamma^{(T_*+1)/2}}{1-\gamma^{1/2}}\bigg]^2,
\label{equ:CNC_Verify_2_imme_5}
\#
where the first inequality follows from the fact that $\gamma^{3/2}<1$, and the equality is obtained by changing the starting point of the summation of the second term to $T_*+1$, and then pulling out $\gamma^{1/2}$ from the 
square bracket. This way,  we can further bound \eqref{equ:CNC_Verify_2_imme_5}  as 
\#
&\Var\big[\hat{Q}_{\pi_\theta}(s_T,a_T)\big]\geq \gamma^{3/2} \cdot L_R^2\cdot \EE_{T'}\bigg[\frac{\gamma^{(T'+1)/2}-\gamma^{(T_*+1)/2}}{1-\gamma^{1/2}}\bigg]^2\notag\\ 
&\quad\geq \gamma^{3/2} \cdot L_R^2\cdot \Var\bigg[\frac{\gamma^{(T'+1)/2}-\gamma^{(T_*+1)/2}}{1-\gamma^{1/2}}\bigg]=\gamma^{3/2} \cdot L_R^2\cdot \Var\bigg[\frac{\gamma^{(T'+1)/2}}{1-\gamma^{1/2}}\bigg],\label{equ:CNC_Verify_2_imme_6}
\#
where the first inequality follows by expressing the right-hand side of  \eqref{equ:CNC_Verify_2_imme_5} as an expectation over $T'$, the second inequality follows from $\EE(X^2)\geq \Var(X)$, and the last equality is due to the fact  that $T_*$ is deterministic and thus does not affect the variance given $(s_T,a_T)$ and $\theta$. 
Note that $\Var[{\gamma^{(T'+1)/2}}]$ can be uniformly bounded as
\#
\Var[{\gamma^{(T'+1)/2}}]&=\EE[{\gamma^{(T'+1)/2}}]^2-\big\{\EE[{\gamma^{(T'+1)/2}}]\big\}^2=\frac{\gamma(1-\gamma^{1/2})}{1-\gamma^{3/2}}-\bigg[\frac{\gamma^{1/2}(1-\gamma^{1/2})}{1-\gamma}\bigg]^2\notag\\
&=\frac{\gamma^{3/2}\cdot (1-\gamma^{1/2})^3}{(1-\gamma^{3/2})\cdot(1-\gamma)^2}>0.\label{equ:CNC_Verify_2_imme_7}
\#
Combining \eqref{equ:CNC_Verify_2_imme_6} and \eqref{equ:CNC_Verify_2_imme_7}, we obtain
\#\label{equ:CNC_Verify_2_imme_8}
\Var\big[\hat{Q}_{\pi_\theta}(s_T,a_T)\big]\geq  \frac{\gamma^{3/2} \cdot L_R^2}{(1-\gamma^{1/2})^2}\cdot  \frac{\gamma^{3/2}\cdot (1-\gamma^{1/2})^3}{(1-\gamma^{3/2})\cdot(1-\gamma)^2}= \frac{L_R^2\cdot\gamma^{3} \cdot (1-\gamma^{1/2})}{(1-\gamma^{3/2})\cdot(1-\gamma)^2}.
\#
By the same arguments as  above, we can also obtain that 
\#\label{equ:CNC_Verify_2_imme_9}
\Var\big[\hat{V}_{\pi_\theta}(s_T)\big]\geq \frac{L_R^2\cdot\gamma^{3} \cdot (1-\gamma^{1/2})}{(1-\gamma^{3/2})\cdot(1-\gamma)^2}.
\#
Substituting \eqref{equ:CNC_Verify_2_imme_8} and \eqref{equ:CNC_Verify_2_imme_9} into \eqref{equ:CNC_Verify_2}, we arrive at 
\#\label{equ:CNC_Verify_2_final}
\EE_{T',(s_{1:T'},a_{1:T'})}\big[\hat{Q}_{\pi_\theta}(s_T,a_T)-\hat{V}_{\pi_\theta}(s_T)\big]^2\geq \frac{2L_R^2\cdot\gamma^{3} \cdot (1-\gamma^{1/2})}{(1-\gamma^{3/2})\cdot(1-\gamma)^2}.
\# 
Finally by combining \eqref{equ:CNC_Verify_2_final} and \eqref{equ:CNC_1_to_verify_2}, we conclude that
\$
\EE\big\{[\vb^\top \check{\nabla}     J(\theta)]^2\biggiven \theta\big\}\geq  \frac{2L_R^2\cdot\gamma^{3} \cdot (1-\gamma^{1/2})}{(1-\gamma^{3/2})\cdot(1-\gamma)^2}\cdot L_I=:\check{\eta}>0.
\$ 
The proof for the case when $R(s,a)\in[-U_R,-L_R]$ is as the one above,   with only some minor modifications due to sign flipping. For example, $\EE_{(s_{1:T'},a_{1:T'})}[\sum_{t=0}^{T'}\gamma^{t/2}\cdot R(s_t,a_t)]$ now becomes a strictly decreasing  function of $T'$ since $R(s,a)\leq -L_R<0$. The remaining arguments are similar, and are omitted  here to avoid repetition.

The proof of $\EE\big\{[\vb^\top \tilde{\nabla}J(\theta)]^2\biggiven \theta\big\}\geq \tilde{\eta}$ for some $\tilde{\eta}>0$ is very similar to the proofs above.  First, we have by definition that 
\#\label{equ:CNC_1_to_verify_3}
\EE\big\{[\vb^\top \tilde{\nabla}J(\theta)]^2\biggiven \theta\big\}&=\EE\big\{\big[R(s_T,a_T)+\gamma\cdot\hat{V}_{\pi_\theta}(s'_T)-\hat{V}_{\pi_\theta}(s_T)\big]^2\cdot\big[\vb^\top \nabla \log\pi_{\theta}(a_T\given s_T)\big]^2\biggiven \theta\big\}\\
&  =\EE_{T,(s_T,a_T)}\big\{\EE_{s_T',T',T'',(s_{1:T'},a_{1:T'}),(s_{1:T''},a_{1:T''})}\big[R(s_T,a_T)+\gamma\cdot\hat{V}_{\pi_\theta}(s'_T)-\hat{V}_{\pi_\theta}(s_T)\big]^2 \nonumber \\
& \qquad \times[\vb^\top \nabla \log[\pi_{\theta}(a_T\given s_T)]^2\biggiven \theta\big\},\notag
\#
where we use $T'$ and $T''$ to represent the random horizon used in calculating $\hat{V}_{\pi_\theta}(s'_T)$ and $\hat{V}_{\pi_\theta}(s_T)$, respectively, and recall that $s_T'$ is sampled from $\PP(\cdot\given s_T,a_T)$. Note that given $(s_T,a_T)$, we have  
\#
&\EE_{s_T',T',T'',(s_{1:T'},a_{1:T'}),(s_{1:T''},a_{1:T''})}\big[R(s_T,a_T)+\gamma\cdot\hat{V}_{\pi_\theta}(s'_T)-\hat{V}_{\pi_\theta}(s_T)\big]^2\notag\\
&\quad =\big\{\EE_{s_T',T',T'',(s_{1:T'},a_{1:T'}),(s_{1:T''},a_{1:T''})}\big[R(s_T,a_T)+\gamma\cdot\hat{V}_{\pi_\theta}(s'_T)-\hat{V}_{\pi_\theta}(s_T)\big]\big\}^2\notag\\
&\qquad\qquad+\gamma^2\cdot \Var[\hat{V}_{\pi_\theta}(s'_T)]+\Var[\hat{V}_{\pi_\theta}(s_T)]\label{equ:CNC_Verify_2_imme_9}\\
&\quad =\big[Q_{\pi_\theta}(s_T,a_T)-V_{\pi_\theta}(s_T)\big]^2+\gamma^2\cdot \Var[\hat{V}_{\pi_\theta}(s'_T)]+\Var[\hat{V}_{\pi_\theta}(s_T)]\label{equ:CNC_Verify_2_imme_10},
\#
where \eqref{equ:CNC_Verify_2_imme_9} and \eqref{equ:CNC_Verify_2_imme_10} are  due to the independence  and  unbiasedness of the estimates $\hat{V}_{\pi_\theta}(s'_T)$ and $\hat{V}_{\pi_\theta}(s_T)$, respectively.  Then, since the variance of  $\hat{V}_{\pi_\theta}(s_T)$ has been lower-bounded by  \eqref{equ:CNC_Verify_2_imme_8}, we can lower-bound   \eqref{equ:CNC_Verify_2_imme_10} and thus further bound \eqref{equ:CNC_1_to_verify_3} by 
\$
\EE\big\{[\vb^\top \tilde{\nabla}J(\theta)]^2\biggiven \theta\big\}\geq  \frac{(1+\gamma^2)\cdot L_R^2\cdot\gamma^{3} \cdot (1-\gamma^{1/2})}{(1-\gamma^{3/2})\cdot(1-\gamma)^2}\cdot L_I=:\tilde{\eta}>0,
\$ 
which completes the proof.
\end{proof}

\subsection{Proof of Theorem \ref{thm:conv_MRPG}}\label{apx_sosp}
\begin{proof}
We first note that we have listed the parameters to be used in our analysis  below in  {Table \ref{table:parameters}} in the main body of the paper, which will be referred to in this section.
 
	Now recall that in Algorithm \ref{alg:EvalPG}, we use $g_\theta$ to unify the notation of the three  stochastic policy gradients $\hat{\nabla}J(\theta)$, $\check{\nabla}J(\theta)$, and $\tilde{\nabla}J(\theta)$ (see the definitions in \eqref{equ:SGD_eva}-\eqref{equ:SGD_eva_3}).   From Theorem \ref{thm:unbiased_and_bnd_grad_est}, we know that all the three stochastic policy gradients are unbiased estimates  of $\nabla J(\theta)$. Moreover, we have shown that all the three stochastic policy gradients have their norms  bounded by some constants $\hat{\ell},\check{\ell}$, and $\tilde{\ell}>0$, respectively, 	which are defined  in Theorem  \ref{thm:unbiased_and_bnd_grad_est}. To unify the notation in the ensuing analysis, we use a common  $\ell$ to denote the bound of $g_\theta$, which takes the value of either $\hat{\ell},\check{\ell}$, or $\tilde{\ell}$, depending on which policy gradient is used. Also, as illustrated   in Lemma \ref{lemma:CNC_verify},  all the  three stochastic policy gradients  satisfy the correlated negative  curvature condition. We thus use a common $\eta$ to represent the value of $\hat{\eta},\check{\eta}$, and $\tilde{\eta}$ correspondingly. Therefore, we have
	\#\label{equ:common_bnd_CNC}
	\|g_\theta\|\leq \ell,\qquad \tx{and}\qquad \EE[(\vb_\theta^\top g_\theta)^2\given \theta]\geq \eta,\text{~~for any~~}\theta,
	\#
	where $\vb_\theta$ is the unit-norm eigenvector corresponding to the maximum eigenvalue of the Hessian at $\theta$. 
	In addition, recall from Lemmas  \ref{lemma:lip_policy_grad} and \ref{lemma:Hessian_Lip} that  $J(\theta)$ is both $L$-gradient Lipschitz and $\rho$-Hessian Lipschitz, i.e., there exist  constants $L$ and $\rho$ (see the  definitions in the corresponding lemmas), such that for any $\theta^1,\theta^2\in\RR^d$, 
	\#\label{equ:restate_Grad_H_Lip}
	&\|\nabla J(\theta^1)-\nabla J(\theta^2)\|\leq L\cdot \|\theta^1-\theta^2\|,\quad \big\|\cH(\theta^1)-\cH(\theta^2)\big\|\leq \rho \cdot \|\theta^1-\theta^2\|. 
	\#

{Our analysis  is separated into three steps that characterize the convergence properties of the iterates in three different regimes,  depending on the magnitude of the gradient and the curvature of the Hessian. This type of analysis for convergence to approximate  second-order stationary points in nonconvex optimization originated from \cite{ge2015escaping}, where isotropic noise is added to the  update to escape the saddle points. Here we do not assume that the stochastic policy gradient has isotropic noise, since : 1) in RL the noise  results from the sampling along the trajectory of the MDP, which do not necessarily satisfy the isotropic property in general; 2) the noise of policy gradients is notoriously known to be large, thus adding artificial  noise may further degrade the performance of the RPG algorithm. An effort to improve the limit points of first-order methods for nonconvex optimization, while avoiding adding artificial noise, has appeared recently in \cite{daneshmand2018escaping}. However, we have identified that the proof in \cite{daneshmand2018escaping} is flawed and cannot be applied directly for the convergence of the RPG algorithms here. Thus, part of our contribution here is to provide a precise fix in its own right, as well as map it to the analysis of policy gradient methods in RL. }

	Note that Algorithm \ref{alg:MRPG} returns the iterates that have indices $k$ such that $k\tx{~mod~}k_{\tx{thre}}=0$, i.e., the iterates belong to the set $\hat{\Theta}^*$. For notational convenience, we index the iterates in $\hat{\Theta}^*$ by $m$, i.e., let $\ttheta_m=\theta_{m\cdot k_{\tx{thre}}}$ for all $m=0,1,\cdots,\lfloor K/k_{\tx{thre}}\rfloor$.  Now we consider the three regimes of the iterates $\{\ttheta_m\}_{m\geq 0}$.

	\vspace{7pt}
	\noindent\textbf{Regime 1: Large gradient}
	\vspace{2pt}
	
	We first introduce the following standard lemma that quantifies the increase of function values, when  stochastic gradient ascent of a smooth function is adopted.
	
	\begin{lemma}\label{lemma:SGA_increase}
		Let $\theta_{k+1}$ be obtained by one stochastic gradient ascent step at $\theta_k$, i.e., $\theta_{k+1}=\theta_{k}+\alpha g_{k}$, where $g_k=g_{\theta_k}$ is an unbiased stochastic gradient at $\theta_k$.
Then, for  any given $\theta_k$, 
the function value $J(\theta_{k+1})$ increases in expectation\footnote{{Note that the expectation here is taken over the randomness of $g_k$.}}  as 
		\$
		\EE[J(\theta_{k+1})]-J(\theta_{k})\geq \alpha \|\nabla J(\theta_{k})\|^2-\frac{L\alpha^2\ell^2}{2}.
		\$
	\end{lemma}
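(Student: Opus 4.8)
The plan is to prove Lemma~\ref{lemma:SGA_increase} by a straightforward application of the descent lemma (a second-order Taylor-type bound) afforded by the $L$-Lipschitz continuity of $\nabla J$ established in Lemma~\ref{lemma:lip_policy_grad}, followed by taking a conditional expectation over the randomness in $g_k$. Since $\nabla J$ is $L$-Lipschitz, the standard quadratic bound gives, for any $\theta_k$ and any realization of $g_k$,
\[
J(\theta_{k+1}) \geq J(\theta_k) + (\theta_{k+1}-\theta_k)^\top \nabla J(\theta_k) - \frac{L}{2}\|\theta_{k+1}-\theta_k\|^2.
\]
This inequality itself follows by writing $J(\theta_{k+1}) - J(\theta_k) = \int_0^1 (\theta_{k+1}-\theta_k)^\top \nabla J(\theta_k + t(\theta_{k+1}-\theta_k))\, dt$ and bounding the integrand using $\|\nabla J(\theta_k + t(\theta_{k+1}-\theta_k)) - \nabla J(\theta_k)\| \leq Lt\|\theta_{k+1}-\theta_k\|$ together with Cauchy--Schwarz; this is exactly the same manipulation already carried out inside the proof of Lemma~\ref{lemma:submartingale}, so I would either cite that or reproduce it in one line.

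Next I would substitute the update rule $\theta_{k+1}-\theta_k = \alpha g_k$, yielding
\[
J(\theta_{k+1}) \geq J(\theta_k) + \alpha\, g_k^\top \nabla J(\theta_k) - \frac{L\alpha^2}{2}\|g_k\|^2.
\]
Then I would take the expectation conditioned on $\theta_k$ (equivalently, over the randomness generating $g_k$). For the linear term, unbiasedness of the stochastic policy gradient from Theorem~\ref{thm:unbiased_and_bnd_grad_est} gives $\EE[g_k \mid \theta_k] = \nabla J(\theta_k)$, hence $\EE[g_k^\top \nabla J(\theta_k) \mid \theta_k] = \|\nabla J(\theta_k)\|^2$. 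For the quadratic term, the almost-sure boundedness $\|g_k\| \leq \ell$ from Theorem~\ref{thm:unbiased_and_bnd_grad_est} gives $\EE[\|g_k\|^2 \mid \theta_k] \leq \ell^2$. Combining these yields
\[
\EE[J(\theta_{k+1}) \mid \theta_k] - J(\theta_k) \geq \alpha\|\nabla J(\theta_k)\|^2 - \frac{L\alpha^2 \ell^2}{2},
\]
which is the claimed bound.

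This lemma is genuinely routine — there is no real obstacle. The only points requiring a modicum of care are: (i) making sure the $L$-smoothness descent inequality is invoked correctly (it holds globally on $\RR^d$ since Lemma~\ref{lemma:lip_policy_grad} gives global Lipschitz continuity), and (ii) being precise that the expectation in the statement is the conditional expectation given $\theta_k$ over the fresh randomness in $g_k$ (the trajectory, the random horizons), and that both unbiasedness and the second-moment bound from Theorem~\ref{thm:unbiased_and_bnd_grad_est} apply conditionally on $\theta_k$. I would present the proof in three short displayed lines as above and conclude.
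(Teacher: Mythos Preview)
Your proof is correct and follows essentially the same approach as the paper's: apply the $L$-smoothness descent inequality, substitute $\theta_{k+1}-\theta_k=\alpha g_k$, and take conditional expectation using unbiasedness of $g_k$ and the almost-sure bound $\|g_k\|\leq \ell$. The paper's version is even more terse (a single displayed line), but the argument is identical.
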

	\begin{proof}
	By the $L$-smoothness of $J(\theta)$, we have
	\$
	\EE[J(\theta_{k+1})]-J(\theta_{k})\geq \alpha \nabla J(\theta_{k})^\top \EE(g_k\given \theta_k)-\frac{L\alpha^2}{2}\|g_k\|^2= \alpha \|\nabla J(\theta_{k})\|^2-\frac{L\alpha^2}{2}\|g_k\|^2,
	\$
	which completes the proof by using the fact that $\|g_k\|^2\leq \ell^2$ almost surely.
	\end{proof}

 
	
	Therefore, when the  norm of the gradient is large at $\ttheta_m$, a large increase of $J(\ttheta)$ from $\ttheta_m$ to $\ttheta_{m+1}$ is guaranteed, as formally stated in the following lemma.
	
	\begin{lemma}\label{lemma:increase_large_grad}
		Suppose the gradient norm at any given $\ttheta_m$ is large such that $\|\nabla J(\ttheta_m)\|\geq \epsilon$, for some $\epsilon > 0$. Then, the expected value of $J(\ttheta_{m+1})$   increases as 
		\$
		\EE[J(\ttheta_{m+1})]-J(\ttheta_{m})\geq J_{\tx{thre}},
		\$
		where the expectation is taken over the sequence from $\theta_{m\cdot k_{\tx{thre}}+1}$ to $\theta_{(m+1)\cdot k_{\tx{thre}}}$ 
	\end{lemma}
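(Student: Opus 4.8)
The plan is to establish Lemma~\ref{lemma:increase_large_grad} by summing the one-step ascent guarantee of Lemma~\ref{lemma:SGA_increase} over a full block of $k_{\tx{thre}}$ iterations. Concretely, within one block the iterates evolve as $\theta_{k+1}=\theta_k+\alpha g_k$ (the enlarged stepsize $\beta$ only occurs at the block boundary, i.e.\ at index $m\cdot k_{\tx{thre}}$ itself, which produces $\theta_{m\cdot k_{\tx{thre}}+1}$). So first I would apply Lemma~\ref{lemma:SGA_increase} once with stepsize $\beta$ at the boundary step, and then $k_{\tx{thre}}-1$ more times with stepsize $\alpha$ for the interior steps, taking iterated conditional expectations and using the tower property to telescope:
\[
\EE[J(\ttheta_{m+1})]-J(\ttheta_m)\;\geq\; \beta\,\EE\|\nabla J(\theta_{m\cdot k_{\tx{thre}}})\|^2-\frac{L\beta^2\ell^2}{2}+\sum_{j=1}^{k_{\tx{thre}}-1}\Big(\alpha\,\EE\|\nabla J(\theta_{m\cdot k_{\tx{thre}}+j})\|^2-\frac{L\alpha^2\ell^2}{2}\Big).
\]
Since all the $\EE\|\nabla J(\cdot)\|^2$ terms are nonnegative, I can drop the interior gradient-norm contributions and keep only the boundary one, giving $\EE[J(\ttheta_{m+1})]-J(\ttheta_m)\geq \beta\,\EE\|\nabla J(\ttheta_m)\|^2-\frac{L\beta^2\ell^2}{2}-(k_{\tx{thre}}-1)\frac{L\alpha^2\ell^2}{2}$.

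Next I would invoke the hypothesis $\|\nabla J(\ttheta_m)\|\geq\epsilon$ to replace $\EE\|\nabla J(\ttheta_m)\|^2$ by $\epsilon^2$ (note $\ttheta_m$ is deterministic given the conditioning, so no subtlety here), yielding
\[
\EE[J(\ttheta_{m+1})]-J(\ttheta_m)\;\geq\; \beta\epsilon^2-\frac{L\beta^2\ell^2}{2}-(k_{\tx{thre}}-1)\frac{L\alpha^2\ell^2}{2}.
\]
Then I would plug in the parameter choices from Table~\ref{table:parameters}: the constraint $\beta\leq\epsilon^2/(2\ell^2L)$ (equation~\eqref{equ:large_grad_immed_1p5}) gives $\frac{L\beta^2\ell^2}{2}\leq\frac{\beta\epsilon^2}{4}$, and the constraint $\alpha\leq\beta/\sqrt{k_{\tx{thre}}}$ (equation~\eqref{equ:large_grad_immed_1p25}) gives $(k_{\tx{thre}}-1)\frac{L\alpha^2\ell^2}{2}\leq k_{\tx{thre}}\cdot\frac{L\alpha^2\ell^2}{2}\leq\frac{L\beta^2\ell^2}{2}\leq\frac{\beta\epsilon^2}{4}$. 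Combining, $\EE[J(\ttheta_{m+1})]-J(\ttheta_m)\geq\beta\epsilon^2-\frac{\beta\epsilon^2}{4}-\frac{\beta\epsilon^2}{4}=\frac{\beta\epsilon^2}{2}$. Finally, the definition $J_{\tx{thre}}=c_2\eta\epsilon^4/(2\ell^2L)$ together with the constraint $J_{\tx{thre}}\leq\beta\epsilon^2/2$ (equation~\eqref{equ:large_grad_immed_2}) closes the bound: $\EE[J(\ttheta_{m+1})]-J(\ttheta_m)\geq\beta\epsilon^2/2\geq J_{\tx{thre}}$.

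The argument is essentially bookkeeping, so I do not anticipate a genuine obstacle; the one point requiring care is the precise bookkeeping of \emph{which} step in the block uses $\beta$ versus $\alpha$ and making sure the conditional expectations are nested correctly (conditioning on $\theta_{m\cdot k_{\tx{thre}}+j}$ when applying the one-step lemma, then taking outer expectation). A second minor point is verifying that dropping the interior $\EE\|\nabla J\|^2$ terms is legitimate — it is, since they are manifestly nonnegative, but one should state this explicitly rather than silently discarding them. I would also make sure the sign conventions match: here $J$ is being \emph{maximized}, so "ascent" and "increase in $J$" are the right framings, consistent with Lemma~\ref{lemma:SGA_increase} as stated.
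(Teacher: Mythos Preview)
Your proposal is correct and follows essentially the same approach as the paper: telescope using Lemma~\ref{lemma:SGA_increase}, drop the nonnegative interior gradient terms, control the variance terms via the constraints $\alpha\leq\beta/\sqrt{k_{\tx{thre}}}$ and $\beta\leq\epsilon^2/(2\ell^2L)$, and conclude with $J_{\tx{thre}}\leq\beta\epsilon^2/2$. The only cosmetic difference is that the paper first combines the two variance terms into $L\beta^2\ell^2$ and then bounds this by $\beta\epsilon^2/2$, whereas you bound each half separately by $\beta\epsilon^2/4$; the arithmetic and the invoked constraints are identical.
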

	\begin{proof}
	We first decompose the difference between the expected value of $J(\ttheta_{m+1})$ and $J(\ttheta_{m})$ as
	\$
	\EE[J(\ttheta_{m+1})]-J(\ttheta_{m})&=\sum_{p=0}^{k_{\tx{thre}}-1}\EE[J(\theta_{m\cdot k_{\tx{thre}}+p+1})]-\EE[J(\theta_{m\cdot k_{\tx{thre}}+p})]\\
	&=\sum_{p=0}^{k_{\tx{thre}}-1}\EE\big\{\EE[J(\theta_{m\cdot k_{\tx{thre}}+p+1})]-J(\theta_{m\cdot k_{\tx{thre}}+p})\biggiven \theta_{m\cdot k_{\tx{thre}}+p}\big\},
	\$
{where $\EE[J(\theta_{m\cdot k_{\tx{thre}}})\given \theta_{m\cdot k_{\tx{thre}}}]=J(\theta_{m\cdot k_{\tx{thre}}})=J(\ttheta_{m})$ for given $\ttheta_m$}.
By Lemma \ref{lemma:SGA_increase}, we further have
	\#
	&\EE[J(\ttheta_{m+1})]-J(\ttheta_{m})\geq \beta \|\nabla J(\theta_{m\cdot k_{\tx{thre}}})\|^2-\frac{L\beta^2\ell^2}{2}+\sum_{p=1}^{k_{\tx{thre}}-1}\alpha  \EE\|\nabla J(\theta_{m\cdot k_{\tx{thre}}+p})\|^2-\frac{k_{\tx{thre}}L\alpha^2\ell^2}{2}\notag\\
	&\quad\geq \beta \|\nabla J(\theta_{m\cdot k_{\tx{thre}}})\|^2-\frac{L\beta^2\ell^2}{2}-\frac{k_{\tx{thre}}L\alpha^2\ell^2}{2}\geq \beta \|\nabla J(\theta_{m\cdot k_{\tx{thre}}})\|^2-{L\beta^2\ell^2},\label{equ:large_grad_immed_1}
	\#
	where the last inequality follows from {Table \ref{table:parameters}} that 
	\#\label{equ:large_grad_immed_1p25}
	\beta^2\geq k_{\tx{thre}}\cdot \alpha^2.
	\#
	Moreover, by the choice of the large stepsize $\beta$, we have 
	\#\label{equ:large_grad_immed_1p5}
	\|\nabla J(\theta_{m\cdot k_{\tx{thre}}})\|^2=\|\nabla J(\ttheta_m)\|^2\geq \epsilon^2\geq 2\ell^2L\beta,
	\#
	which yields a lower-bound on the right-hand side of \eqref{equ:large_grad_immed_1} as
	\#\label{equ:large_grad_immed_2}
	\EE[J(\ttheta_{m+1})]-J(\ttheta_{m})\geq \beta \|\nabla J(\theta_{m\cdot k_{\tx{thre}}})\|^2-{L\beta^2\ell^2}\geq \beta \|\nabla J(\theta_{m\cdot k_{\tx{thre}}})\|^2/2\geq \beta \epsilon^2/2\geq J_{\tx{thre}}.
	\#
	The choice of $J_{\tx{thre}}\leq \beta \epsilon^2/2$ completes the proof.
	\end{proof}

	\vspace{7pt}
	\noindent\textbf{Regime 2: Near  saddle points}  
	\vspace{2pt}    
	
	When the iterate reaches the neighborhood of saddle points, our modified RPG will use a larger stepsize $\beta$ to find the positive eigenvalue direction, and then uses small stepsize $\alpha$ to follow this positive curvature direction. We establish in the following lemma that  such an updating strategy also leads to a sufficient increase of function value, provided that the maximum eigenvalue of the Hessian $\cH(\ttheta_m)$ is large enough. This enables the iterate to escape the saddle points efficiently. 
	
	\begin{lemma}\label{lemma:saddle_escape}
		Suppose that the Hessian at any given $\ttheta_m$  has a large positive eigenvalue such that $\lambda_{\max}[\cH(\ttheta_m)]\geq \sqrt{\rho\epsilon}$. Then, after $k_{\tx{thre}}$ steps we have 
		\$
		\EE[J(\ttheta_{m+1})]-J(\ttheta_{m})\geq J_{\tx{thre}},
		\$
		where the expectation is taken over the sequence from $\theta_{m\cdot k_{\tx{thre}}+1}$ to $\theta_{(m+1)\cdot k_{\tx{thre}}}$. 
	\end{lemma}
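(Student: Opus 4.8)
The plan is to adapt the correlated-negative-curvature escape argument of \cite{daneshmand2018escaping} to the RPG setting, while repairing the gap in their second-moment recursion. Reindex the iterates inside the current block as $\theta_0 := \ttheta_m,\theta_1,\dots,\theta_{k_{\tx{thre}}} =: \ttheta_{m+1}$, where the first step uses the enlarged stepsize, $\theta_1 = \theta_0 + \beta g_0$, and $\theta_{t+1} = \theta_t + \alpha g_t$ for $t = 1,\dots,k_{\tx{thre}}-1$. Let $\vb := \vb_{\theta_0}$ be a unit eigenvector of $\cH(\theta_0)$ associated with $\lambda := \lambda_{\max}[\cH(\theta_0)] \geq \sqrt{\rho\epsilon}$, and set $u_t := \vb^\top(\theta_t - \theta_0)$, the displacement along the escaping direction. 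The CNC condition (Lemma \ref{lemma:CNC_verify}) injects a guaranteed variance at the first step: $\EE[u_1^2 \given \theta_0] = \beta^2\,\EE[(\vb^\top g_0)^2 \given \theta_0] \geq \beta^2\eta$.

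First I would build a quadratic surrogate for the block. Since $\|g_\theta\| \leq \ell$ almost surely (Theorem \ref{thm:unbiased_and_bnd_grad_est}), every iterate in the block stays within $(\beta + k_{\tx{thre}}\alpha)\ell$ of $\theta_0$, so by the $\rho$-Hessian-Lipschitz property (Lemma \ref{lemma:Hessian_Lip}) the Hessian along the trajectory differs from $\cH(\theta_0)$ by at most $\rho(\beta + k_{\tx{thre}}\alpha)\ell$ in operator norm. Coupling the true process with the linearized process $\theta_{t+1} \approx \theta_t + \alpha[\nabla J(\theta_0) + \cH(\theta_0)(\theta_t - \theta_0)] + \alpha\xi_t$, where $\xi_t = g_t - \nabla J(\theta_t)$ is a martingale-difference term, gives a recursion $u_{t+1} = (1+\alpha\lambda)u_t + \alpha\vb^\top\xi_t + \alpha\vb^\top\nabla J(\theta_0) + r_t$, with $r_t$ collecting the Hessian-mismatch and higher-order errors. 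The heart of the proof is to turn this into a lower bound for $s_t := \EE[u_t^2]$: expanding the square and keeping \emph{all} cross terms (this is precisely where the original argument is loose), then bounding the cross and error contributions by Cauchy--Schwarz together with $\|g\|\leq\ell$, one obtains $s_{t+1} \geq (1+\alpha\lambda)^2 s_t + \alpha^2\,\EE[(\vb^\top\xi_t)^2] - \mathrm{err}_t$, which unrolls to $s_t \geq \frac{1}{2}(1+\alpha\sqrt{\rho\epsilon})^{2t}\beta^2\eta$ for all $t \leq k_{\tx{thre}}$ provided the parameters obey the constraints of Table \ref{table:parameters}. Choosing $k_{\tx{thre}}$ of order $\Omega\big(\alpha^{-1}(\rho\epsilon)^{-1/2}\log(\ell_g L/(\eta\beta\alpha\sqrt{\rho\epsilon}))\big)$ as in the table makes this geometric amplification as large as the a.s.\ displacement budget permits.

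Finally I would convert the amplified displacement into a function-value increase. A third-order Taylor expansion about $\theta_0$ gives $J(\theta_{k_{\tx{thre}}}) - J(\theta_0) \geq \nabla J(\theta_0)^\top(\theta_{k_{\tx{thre}}} - \theta_0) + \frac{1}{2}(\theta_{k_{\tx{thre}}} - \theta_0)^\top\cH(\theta_0)(\theta_{k_{\tx{thre}}} - \theta_0) - \frac{\rho}{6}\|\theta_{k_{\tx{thre}}} - \theta_0\|^3$. Taking expectations, bounding $\|\theta_{k_{\tx{thre}}} - \theta_0\| \leq k_{\tx{thre}}\alpha\ell + \beta\ell$ deterministically for the cubic term, applying Young's inequality to the (possibly unfavorable) linear term $\EE[\nabla J(\theta_0)^\top(\theta_{k_{\tx{thre}}} - \theta_0)]$, and lower-bounding the quadratic form by its $\vb$-component $\frac{1}{2}\lambda\,s_{k_{\tx{thre}}}$, the geometric blow-up of $s_{k_{\tx{thre}}}$ dominates the residual terms once $\beta \leq \eta\lambda^2/(24L\ell^3\rho)$, $\alpha \leq c'\eta\beta\lambda^3/(24L\ell^3\rho)$ and $J_{\tx{thre}} \leq \eta\beta\lambda^2/(48\ell\rho)$, yielding $\EE[J(\ttheta_{m+1})] - J(\ttheta_m) \geq J_{\tx{thre}}$. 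The main obstacle is exactly the error control in the middle step: over $k_{\tx{thre}} = \Omega(\epsilon^{-5}\log(1/\epsilon))$ iterations both the Hessian mismatch $\|\cH(\theta_t) - \cH(\theta_0)\|$ and the coupling between the $\vb$-direction and the dynamics in its orthogonal complement (where eigenvalues of $\cH(\theta_0)$ in $[0,\lambda]$ also amplify displacement, and $\lambda_{\min}$ may be as negative as $-L$) accumulate, and the coarse bound used in \cite{daneshmand2018escaping} fails to show that the escaping signal $(1+\alpha\sqrt{\rho\epsilon})^{2t}\beta^2\eta$ outpaces this accumulation. The fix---which forces $\alpha$ down to order $\epsilon^{9/2}$ and $\beta$ to order $\epsilon$---is to track the second-moment recursion term by term, retaining the cross terms and absorbing them into the $(1+\alpha\lambda)^2$ factor up to lower order rather than discarding them; everything else is geometric-series bookkeeping with the substitutions from Table \ref{table:parameters}.
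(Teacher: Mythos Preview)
Your plan has a genuine structural gap in the final step, and the paper avoids it by a different route.

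The step ``lower-bounding the quadratic form by its $\vb$-component $\tfrac{1}{2}\lambda\,s_{k_{\tx{thre}}}$'' is not valid. Writing $\theta_{k_{\tx{thre}}}-\theta_0=u_{k_{\tx{thre}}}\vb+w$ with $w\perp\vb$, the quadratic term splits as $\lambda u_{k_{\tx{thre}}}^2+w^\top\cH(\theta_0)w$, and the second piece can be as negative as $-L\,\|w\|^2$. You only control $\EE[u_{k_{\tx{thre}}}^2]$; the orthogonal displacement $\|w\|$ can be as large as the full a.s.\ budget $(\beta+k_{\tx{thre}}\alpha)\ell$. With the Table~\ref{table:parameters} scalings, $k_{\tx{thre}}\alpha$ is of order $\epsilon^{-1/2}$, so $L\,\EE\|w\|^2$ and the cubic remainder $\tfrac{\rho}{6}\|\theta_{k_{\tx{thre}}}-\theta_0\|^3$ are of order $\epsilon^{-1}$ and $\epsilon^{-3/2}$ respectively, while the best you can hope for from $\lambda\,s_{k_{\tx{thre}}}$ is $\sqrt{\rho\epsilon}\cdot\|\theta_{k_{\tx{thre}}}-\theta_0\|^2=O(\epsilon^{-1/2})$. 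The escaping signal is swamped, and the same blow-up of the a.s.\ displacement also invalidates your Hessian-mismatch control in the middle step.

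The paper does \emph{not} convert the amplified displacement into a function-value increase via Taylor expansion. Instead it runs an ``improve or localize'' contradiction: \emph{assume} $\EE[J(\ttheta_{m+1})]-J(\ttheta_m)\le J_{\tx{thre}}$; this hypothesis, through the ascent inequality, forces $\sum_q\EE\|\nabla J(\theta_q)\|^2$ to be small, which in turn gives a \emph{polynomial-in-$p$} upper bound on $\EE\|\theta_p-\theta_0\|^2$ (Lemma~\ref{lemma:distance_bnd}). Separately, the CNC condition gives an \emph{exponential-in-$p$} lower bound $\EE\|\theta_p-\theta_0\|^2\ge\tfrac{\eta\beta^2}{6}(1+\alpha\lambda)^{2p}$ via the decomposition $\theta_{p+1}-\theta_0=\ub_p+\alpha(\bdelta_p+\db_p+\bxi_p)$, where the Hessian-mismatch piece $\bdelta_p$ is controlled \emph{using the polynomial upper bound just derived}. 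For $p=k_{\tx{thre}}$ chosen as in the table, the exponential lower bound exceeds the polynomial upper bound, contradicting the hypothesis. The contradiction framework is what furnishes the small-displacement estimate you are missing; without it, neither your linearization nor your final Taylor step closes.
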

	
	 Lemma \ref{lemma:saddle_escape} asserts that after  $k_{\tx{thre}}$ steps, the expected function value increases by  at least $J_{\tx{thre}}$. 
	 Together with Lemma \ref{lemma:increase_large_grad}, it can be shown that the expected return $\EE[J(\ttheta_{m+1})]$ is always increasing, as long as the iterate $\ttheta_m$ violates the approximate second-order stationary point condition, i.e., $\|\nabla J(\ttheta_m)\|\geq \epsilon$ or $\lambda_{\max}[\cH(\ttheta_m)]\geq \sqrt{\rho\epsilon}$. 
	 The proof of Lemma \ref{lemma:saddle_escape} is deferred to \S\ref{sec:proof_lemma:saddle_escape} to maintain the flow here.
	 	
	\vspace{7pt}
	\noindent\textbf{Regime 3: Near second-order stationary points}
	\vspace{2pt}
	
	When the iterate converges to the neighborhood of the desired second-order stationary points, both the norm of the gradient and the largest eigenvalue are small. However, due to the variance of the stochastic policy gradient, the function value may still decrease. By Lemma \ref{lemma:SGA_increase} and \eqref{equ:large_grad_immed_2}, we can immediately show that such a decrease is  bounded, i.e.,
	\#\label{equ:dec_bnd_near_SP}
	&\EE[J(\ttheta_{m+1})]-J(\ttheta_{m})\geq -{L\beta^2\ell^2}\geq-\delta J_{\tx{thre}}/2,
	\#
	which is due to the choice of $J_{\tx{thre}}\geq 2L(\ell\beta)^2/\delta$ as in {Table \ref{table:parameters}}.

	Now we combine the arguments above to obtain a probabilistic lower-bound   on the returned approximate  second-order stationary point. Let $\cE_m$ be the event that 
	\$
	\cE_m:=\{\|\nabla J(\ttheta_m)\|\geq \epsilon \tx{~or~} \lambda_{\max}[\cH(\ttheta_m)]\geq \sqrt{\rho\epsilon}\}.
	\$
	By Lemmas \ref{lemma:increase_large_grad} and \ref{lemma:saddle_escape}, we have
	\#\label{equ:comb_inc}
	\EE[J(\ttheta_{m+1})-J(\ttheta_{m})\given \cE_m]\geq J_{\tx{thre}},
	\#
	{where the  expectation is taken over  the randomness of both $\ttheta_{m+1}$ and $\ttheta_{m}$ given the event $\cE_m$.} 
	Namely, after $k_{\tx{thre}}$ steps, as long as $\ttheta_{m}$ is not an $(\epsilon,\sqrt{\rho\epsilon})$-approximate second-order stationary point, a sufficient increase of $\EE[J(\ttheta_{m+1})]$ is guaranteed. Otherwise, we can still control the possible decrease of the return using \eqref{equ:dec_bnd_near_SP}, which yields
	\#\label{equ:comb_dec}
	\EE[J(\ttheta_{m+1})-J(\ttheta_{m})\given \cE_m^c]\geq -\delta J_{\tx{thre}}/2,
	\#
	where $\cE_m^c$ is the complement event of $\cE_m$.

%

Let $\cP_m$ denote the probability of the occurrence of the event $\cE_m$. Thus, the total expectation  $\EE[J(\ttheta_{m+1})-J(\ttheta_{m})]$ can be obtained by combining \eqref{equ:comb_inc} and \eqref{equ:comb_dec} as follows
\#\label{equ:comb_inc_tt_exp}
\EE[J(\ttheta_{m+1})-J(\ttheta_{m})]\geq (1-\cP_m)\cdot \bigg(-\frac{\delta J_{\tx{thre}}}{2}\bigg) +\cP_m\cdot J_{\tx{thre}}.
\#
Suppose the iterate of $\theta_k$ runs for $K$ steps starting from $\theta_0$;  then there are  $M=\lfloor K/k_{\tx{thre}} \rfloor$ of $\ttheta_m$ for  $k=1,\cdots,K$. 
Summing up all the $M$ steps of $\{\ttheta_m\}_{m=1,\cdots,M}$, we obtain from \eqref{equ:comb_inc_tt_exp} that
\$
\frac{1}{M}\sum_{m=1}^M \cP_m\leq \frac{J^*-J(\theta_0)}{M J_{\tx{thre}}}+\frac{\delta}{2}\leq \delta,
\$
where $J^*$ is the global maximum of $J(\theta)$, and the last inequality follows from the choice of $K$ in {Table \ref{table:parameters}} that satisfies 
\#\label{equ:K_lower_bnd}
K\geq 2[{J^*-J(\theta_0)}]k_{\tx{thre}}/({\delta J_{\tx{thre}}}). 
\#  
Therefore, the probability of the event $\cE_m^c$ occurs, i.e., the probability of retrieving an $(\epsilon,\sqrt{\rho\epsilon})$ approximate second-order stationary point uniformly over the iterates in $\hat{\Theta}^*$, can be lower-bounded by
\$
1-\frac{1}{M}\sum_{m=1}^M \cP_m\geq  1-\delta. 
\$
This completes the proof. 
\end{proof}

\subsection{Proof of Lemma \ref{lemma:saddle_escape}}\label{sec:proof_lemma:saddle_escape}
\begin{proof}
The proof is based on the \emph{improve or localize} framework proposed in \cite{jin2017accelerated}.
{  
The basic idea is as follows: starting from some iterate, 
if the following iterates of stochastic gradient update do not improve the objective value to a great degree, then the iterates must not move much from the starting iterate. 
Our goal here is to show that after $k_{\tx{thre}}$ steps, the objective value will increase by at least $J_{\tx{thre}}$. In particular, the proof proceeds by contradiction: suppose the objective value does not increase  by $J_{\tx{thre}}$ from $\ttheta_m$ to $\ttheta_{m+1}$, then the distance between the two iterates can be upper-bounded by a polynomial function of the number of iterates in between, i.e., $k_{\tx{thre}}$. On the other hand, due to the CNC condition (cf. Lemma \ref{lemma:CNC_verify}), the distance between $\ttheta_m$ and $\ttheta_{m+1}$ can be shown to be lower-bounded by an exponential function of $k_{\tx{thre}}$. This way, by choosing large enough $k_{\tx{thre}}$ following Table \ref{table:parameters}, the lower-bound exceeds the upper-bound, which causes a contradiction and justifies our argument. }

First, for notational convenience, we suppose $m=k=0$ without loss of generality, 
and 
denote 
\#\label{equ:obj_grad_hess_shorthand}
J_p = J(\theta_p),\quad \nabla J_p = \nabla J(\theta_p),\quad \cH_p= \nabla^2 J(\theta_p),
\#
for any $p=0,\cdots,k_{\tx{thre}}-1$.  
Suppose that starting from $\ttheta_0$, after $1$ step iteration with large stepsize $\beta$ and $k_{\tx{thre}}-1$ steps with small stepsize $\alpha$, the expected return value  does not increase by   more than $J_{\tx{thre}}$, i.e., 
	\#\label{equ:obj_contra}
	\EE(J_{k_{\tx{thre}}})-J_0\leq J_{\tx{thre}}. 
	\#
Then, for any $0\leq p\leq k_{\tx{thre}}$, we can establish that the expectation of the distance from $\theta_{p}$ to $\theta_0$ is upper-bounded, as formally stated in the following lemma.

	\begin{lemma}\label{lemma:distance_bnd}
		Given any $\theta_0$, suppose \eqref{equ:obj_contra} holds, 
		   for any $0\leq p\leq k_{\tx{thre}}$. Then,     the  expected distance between $\theta_{p}$ and $\theta_0$  can be upper-bounded as 
		  \#
		&\EE\|\theta_{p}-\theta_0\|^2\leq\big[4\alpha^2\ell_g^2+4\alpha J_{\tx{thre}}+2{L\alpha(\ell \beta)^2}+2{L\ell^2\alpha^3 k_{\tx{thre}}}\big]\cdot p+2\beta^2\ell^2,\label{equ:uppbnd_res_2}
		\#
		where $\ell_g^2:=2\ell^2+{2B^2_\Theta U^2_R}\cdot{(1-\gamma)^{-4}}$.
	\end{lemma}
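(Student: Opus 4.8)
The plan is to prove Lemma \ref{lemma:distance_bnd} by combining a one-step ``descent-lemma''-type estimate (running in reverse, since we are maximizing) with the contradiction hypothesis \eqref{equ:obj_contra}. The key identity is the decomposition $\theta_p - \theta_0 = \sum_{j=0}^{p-1}(\theta_{j+1}-\theta_j)$, where each increment is $\beta g_0$ for $j=0$ and $\alpha g_j$ for $1\le j\le p-1$. First I would square this, take expectations, and split the diagonal terms from the cross terms. The diagonal terms contribute at most $\beta^2\ell^2$ (for $j=0$) plus $\alpha^2\ell^2$ for each subsequent $j$, using $\|g_\theta\|\le\ell$ from \eqref{equ:common_bnd_CNC}. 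The cross terms are the crux: I would expand $2\EE[(\theta_{j+1}-\theta_j)^\top(\theta_i-\theta_0)]$ for $i<j$ by conditioning on $\cF_{j}$ (the filtration up to step $j$), so that $\EE[g_j\mid\cF_j] = \nabla J(\theta_j)$, reducing the cross term to $\alpha\,\nabla J(\theta_j)^\top(\theta_i-\theta_0)$ plus (for $j=0$) the $\beta$-analogue. This brings $\nabla J(\theta_j)$ into play, which I then want to bound using the telescoping sum of function-value increases controlled by \eqref{equ:obj_contra}.

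The second ingredient is a quantitative version of Lemma \ref{lemma:SGA_increase}: from $L$-smoothness, $J_{j+1} \ge J_j + \alpha\nabla J_j^\top g_j - \tfrac{L\alpha^2}{2}\|g_j\|^2$, so taking conditional expectation, $\alpha\|\nabla J_j\|^2 \le \EE[J_{j+1}\mid\cF_j] - J_j + \tfrac{L\alpha^2\ell^2}{2}$. Summing $j=1,\dots,p-1$ and adding the $j=0$ term with stepsize $\beta$ (contributing $\tfrac{L\beta^2\ell^2}{2}$), the telescoping gives $\beta\|\nabla J_0\|^2 + \alpha\sum_{j=1}^{p-1}\EE\|\nabla J_j\|^2 \le \EE(J_p)-J_0 + \tfrac{L\beta^2\ell^2}{2} + \tfrac{(p-1)L\alpha^2\ell^2}{2} \le J_{\tx{thre}} + \tfrac{L\beta^2\ell^2}{2} + \tfrac{k_{\tx{thre}}L\alpha^2\ell^2}{2}$, invoking \eqref{equ:obj_contra} and monotonicity of the increments (each partial increase $\EE(J_q)-J_0$ is non-decreasing, or one argues directly that $\EE(J_p)-J_0\le \EE(J_{k_{\tx{thre}}})-J_0\le J_{\tx{thre}}$ using the increase lemmas). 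This yields $\EE\sum_{j=1}^{p-1}\|\nabla J_j\|^2 = O(J_{\tx{thre}}/\alpha + L\ell^2\beta^2/\alpha + L\ell^2\alpha k_{\tx{thre}})$, which is exactly the bracket appearing in \eqref{equ:uppbnd_res_2} after multiplying through.

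Putting these together: in the cross-term expansion I would apply Cauchy–Schwarz or Young's inequality $2ab\le a^2+b^2$ to each $\alpha\nabla J_j^\top(\theta_i-\theta_0)$ — but a cleaner route is to bound $\|\theta_p-\theta_0\|^2 = \|\sum_j \Delta_j\|^2 \le p\sum_j\|\Delta_j\|^2$ by Cauchy–Schwarz directly, which already gives $\EE\|\theta_p-\theta_0\|^2 \le p(\beta^2\ell^2 + \sum_{j=1}^{p-1}\alpha^2\EE\|g_j\|^2)$ — but that only uses the crude $\ell$ bound and doesn't produce the $J_{\tx{thre}}$ term. The refinement giving the tighter $\alpha^2\ell_g^2$ coefficient and the $\alpha J_{\tx{thre}}$ term must come from replacing $\EE\|g_j\|^2$ by $\EE\|\nabla J_j\|^2 + \EE\|g_j - \nabla J_j\|^2$, bounding the variance term $\EE\|g_j-\nabla J_j\|^2 \le \EE\|g_j\|^2 + \|\nabla J_j\|^2$ — actually $\le 2\ell^2 + 2\|\nabla J(\theta_j)\|^2$ and using $\|\nabla J(\theta)\|\le B_\Theta U_R/(1-\gamma)^2$ from Theorem \ref{thm:unbiased_and_bnd_grad_est} to get the constant $\ell_g^2 = 2\ell^2 + 2B_\Theta^2U_R^2/(1-\gamma)^4$ — while the $\EE\|\nabla J_j\|^2$ sum is controlled by the telescoping argument above. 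I expect the main obstacle to be the careful bookkeeping of which terms get the large stepsize $\beta$ versus the small stepsize $\alpha$, keeping the $j=0$ term separate throughout, and making sure the conditional-expectation manipulations are valid (i.e., that $g_j$ is $\cF_{j+1}$-measurable and unbiased given $\cF_j$, which follows from Theorem \ref{thm:unbiased_and_bnd_grad_est}); the inequality chain $2\beta^2\ell^2$ as the additive constant rather than $\beta^2\ell^2$ comes from an application of $(a+b)^2\le 2a^2+2b^2$ when separating the $j=0$ increment from the rest, so I would track those factors of $2$ deliberately.
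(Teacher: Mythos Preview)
Your proposal has all the right ingredients and follows essentially the same route as the paper, but the assembly in your third paragraph takes a wrong turn that would not yield the bound as stated. If you stay inside the framework $\|\sum_j\Delta_j\|^2\le p\sum_j\|\Delta_j\|^2$ and then refine each $\EE\|g_j\|^2=\|\nabla J_j\|^2+\EE\|g_j-\nabla J_j\|^2$, the noise contribution becomes $p\cdot\alpha^2\sum_{j}\EE\|g_j-\nabla J_j\|^2\le p(p-1)\alpha^2\ell_g^2$, and the $\beta$-step contributes $p\beta^2\ell^2$; both carry an extra factor of $p$ compared with \eqref{equ:uppbnd_res_2}.

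The fix is exactly the two moves you already mention but don't chain together. First separate the $\beta$-step via $(a+b)^2\le 2a^2+2b^2$ (this you have), giving $\EE\|\theta_p-\theta_0\|^2\le 2\beta^2\ell^2+2\alpha^2\EE\|\sum_{q=1}^{p-1}g_q\|^2$. Then apply the bias--variance split to the \emph{sum}, not termwise:
\[
\Big\|\sum_{q}g_q\Big\|^2\le 2\Big\|\sum_q(g_q-\nabla J_q)\Big\|^2+2\Big\|\sum_q\nabla J_q\Big\|^2.
\]
Now the conditioning you invoked in your first paragraph does the real work: the increments $g_q-\nabla J_q$ are a martingale difference sequence, so $\EE\|\sum_q(g_q-\nabla J_q)\|^2=\sum_q\EE\|g_q-\nabla J_q\|^2\le (p-1)\ell_g^2$, linear in $p$ with no extra factor. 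Cauchy--Schwarz is applied only to the deterministic gradient sum, $\|\sum_q\nabla J_q\|^2\le(p-1)\sum_q\|\nabla J_q\|^2$, and your telescoping bound on $\sum_q\EE\|\nabla J_q\|^2$ finishes the job. This is precisely the paper's argument.

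One further minor point: to bound $\sum_{q=0}^{p-1}\EE\|\nabla J_q\|^2$ for $p<k_{\tx{thre}}$ via \eqref{equ:obj_contra}, you cannot simply assert $\EE(J_p)-J_0\le\EE(J_{k_{\tx{thre}}})-J_0$ (this need not hold). Instead, telescope the one-step ascent lemma all the way to $k_{\tx{thre}}$ and then drop the nonnegative terms $\alpha\sum_{q=p}^{k_{\tx{thre}}-1}\EE\|\nabla J_q\|^2$ and $(\beta-\alpha)\|\nabla J_0\|^2$ from the lower side before invoking \eqref{equ:obj_contra}.
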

	\begin{proof}
		We have obtained from Lemma \ref{lemma:SGA_increase} and \eqref{equ:large_grad_immed_1} (with $m=0$) that 
	\#
	&\EE(J_{k_{\tx{thre}}})-J_0\geq \beta \|\nabla J_0\|^2-\frac{L\beta^2\ell^2}{2}+\sum_{q=1}^{k_{\tx{thre}}-1}\alpha  \EE\|\nabla J_q\|^2-\frac{k_{\tx{thre}}L\alpha^2\ell^2}{2}\notag\\
	&\quad\geq -\frac{L\beta^2\ell^2}{2}+\alpha\sum_{q=0}^{p-1}  \EE\|\nabla J_q\|^2-\frac{k_{\tx{thre}}L\alpha^2\ell^2}{2},\notag
	\#
	since $0\leq \alpha<\beta$ and $0\leq p\leq k_{\tx{thre}}$, where {we note that the total expectation is taken along  the sequence from $\theta_1$ to $\theta_{k_\tx{thre}}$, and we write $\|\nabla J_0\|^2=\EE\|\nabla J_0\|^2$ since $\theta_0$ is given and deterministic}. 
	Combined with  \eqref{equ:obj_contra}, we have 
	\$
	J_{\tx{thre}}\geq \alpha\sum_{q=0}^{p-1}  \EE\|\nabla J_q\|^2-\frac{k_{\tx{thre}}L\alpha^2\ell^2}{2}-\frac{L\beta^2\ell^2}{2},
	\$
	which implies that  
	\#\label{equ:obj_contra_immed_1}
	\sum_{q=0}^{p-1}\EE\|\nabla J_q\|^2\leq  \frac{J_{\tx{thre}}}{\alpha}+\frac{k_{\tx{thre}}L\alpha\ell^2}{2}+\frac{L\beta^2\ell^2}{2\alpha}. 
	\#
	Now, let us consider the distance  between $\theta_{p}$ and $\theta_0$ that can be decomposed as follows: 
	\#\label{equ:bnd_distance_immed_1}
	\EE\|\theta_{p}-\theta_0\|^2=\EE\bigg\|\sum_{q=0}^{p-1}\theta_{q+1}-\theta_{q}\bigg\|^2\leq 2\alpha^2\EE\bigg\|\sum_{q=1}^{p-1}g_q\bigg\|^2+2\beta^2\EE\big\|g_{0}\big\|^2,
\#
where the first equality comes from the telescopic property of the summand and the later inequality comes from $\|a+b\|^2\leq 2\|a\|^2+2\|b\|^2$. 

For the first term on the right-hand side of \eqref{equ:bnd_distance_immed_1},  we have  
	\#\label{equ:bnd_distance_immed_2}
	&2\alpha^2\EE \bigg\|\sum_{q=1}^{p-1}g_q\bigg\|^2= 2\alpha^2\EE\bigg\|\sum_{q=1}^{p-1}g_q-\nabla J_q+\nabla J_q\bigg\|^2\notag\\
	&\quad \leq 4\alpha^2\EE\bigg\|\sum_{q=1}^{p-1}g_q-\nabla J_q\bigg\|^2 
+4\alpha^2\EE\bigg\|\sum_{q=1}^{p-1}\nabla J_q\bigg\|^2\notag\\
	&\quad =4\alpha^2\EE\sum_{q=1}^{p-1}\big\|g_q-\nabla J_q\big\|^2+4\alpha^2\EE\bigg\|\sum_{q=1}^{p-1}\nabla J_q\bigg\|^2,
	\#
	where the first inequality follows from $\|a+b\|^2\leq 2\|a\|^2+2\|b\|^2$, and the 
	last equality  uses the fact that  $\EE[(g_p-\nabla J_p)^\top(g_q-\nabla J_q)]=0$  for any  $p\neq q$,  since the stochastic error $g_p-\nabla J_q$ across iterations are independent, and $g_p$ is an  unbiased estimate of $\nabla J_p$.  
	Moreover,  due to the boundedness of $\|\nabla J_q\|$ and $\|g_q\|$ for any value of $\theta_q$ (cf. Theorem  \ref{thm:unbiased_and_bnd_grad_est}),  we have
	\$
	\EE\big\|g_q-\nabla J_q\big\|^2&\leq 2\EE\big\|g_q\big\|^2+2\EE\big\|\nabla J_q\big\|^2\\
	&\leq 2\ell^2+\frac{2B^2_\Theta U^2_R}{(1-\gamma)^4}=:\ell_g^2.
	\$
	Thus, by the Cauchy-Schwarz inequality and \eqref{equ:obj_contra_immed_1}, we can further upper-bound the right-hand side of \eqref{equ:bnd_distance_immed_2} as 
	\#\label{equ:bnd_distance_immed_3}
	&2\alpha^2\EE\bigg\|\sum_{q=1}^{p-1}g_q\bigg\|^2\leq  4\alpha^2\sum_{q=1}^{p-1}\EE\big\|g_q-\nabla J_q\big\|^2+4\alpha^2\EE\bigg\|\sum_{q=1}^{p-1}\nabla J_q\bigg\|^2\notag\\
	&\quad\leq 4\alpha^2\cdot (p-1)\cdot \ell_g^2+4\alpha^2\cdot(p-1)\cdot\sum_{q=1}^{p-1}\EE\big\|\nabla J_q\big\|^2\notag\\
	&\quad \leq 4\alpha^2\cdot (p-1)\cdot \ell_g^2+4\alpha^2\cdot(p-1)\cdot\bigg(\frac{J_{\tx{thre}}}{\alpha}+\frac{k_{\tx{thre}}L\alpha\ell^2}{2}+\frac{L\beta^2\ell^2}{2\alpha}\bigg),
	\#
	where we  recall that the expectation is taken over the random sequence $\{\theta_1,\cdots,\theta_{p-1}\}$.

	For the second term on the right-hand side of \eqref{equ:bnd_distance_immed_1}, observe that $\EE\|g_{0}\|^2	\leq \ell^2$. Therefore, combined with \eqref{equ:bnd_distance_immed_3}, we may upper estimate \eqref{equ:bnd_distance_immed_1} as 
	\$
	&\EE(\|\theta_{p}-\theta_0\|^2)\leq 4\alpha^2\cdot (p-1)\cdot \ell_g^2+4\alpha^2\cdot(p-1)\cdot\bigg(\frac{J_{\tx{thre}}}{\alpha}+\frac{k_{\tx{thre}}L\alpha\ell^2}{2}+\frac{L\beta^2\ell^2}{2\alpha}\bigg)+2\beta^2\ell^2\\
	&\quad \leq \bigg[4\alpha^2\cdot  \ell_g^2+4\alpha^2\cdot\bigg(\frac{J_{\tx{thre}}}{\alpha}+\frac{k_{\tx{thre}}L\alpha\ell^2}{2}+\frac{L\beta^2\ell^2}{2\alpha}\bigg)\bigg]\cdot p+2\beta^2\ell^2,
	\$
	which completes the proof. 	
\end{proof}

	By substituting $q=k_{\tx{thre}}$, 	Lemma \ref{lemma:distance_bnd} asserts that the  expected distance from $\theta_{k_{\tx{thre}}}$ to $\theta_0$ is upper-bounded by a quadratic function of $k_{\tx{thre}}$. 
	{As illustrated at the beginning of the proof, we proceed   by providing a lower-bound on this distance, and show that the lower-bound exceeds the upper-bound given in Lemma \ref{lemma:distance_bnd}. As a result, the assumption that  
	\eqref{equ:obj_contra} holds is not true, which implies a sufficient increase of no less than $J_{\tx{thre}}$ from $J_0$ to $\EE(J_{k_{\tx{thre}}})$. 
	}
	
	 To create such a  lower-bound, we first note that  for any $\theta$ close to $\theta_0$, the function value $J(\theta)$ can be approximated by some quadratic function $\cQ(\theta)$, i.e.,
	\#\label{equ:def_Q_approx}
	\cQ(\theta)=J_0+(\theta-\theta_0)^\top \nabla J_0+\frac{1}{2}(\theta-\theta_0)^\top \cH_0(\theta-\theta_0).
	\#
	This way, one can then bound the difference between the gradients of $J$ and $\cQ$ in the following lemma.
	\begin{lemma}[\cite{nesterov2013introductory}]\label{lemma:taylor_exp_error}
		For any twice-differentiable, $\rho$-Hessian Lipschitz function $J:\RR^d\to \RR$, using the quadratic approximation in \eqref{equ:def_Q_approx}, the following bound holds
		\$
		\|\nabla J(\theta)-\nabla \cQ(\theta)\|\leq \frac{\rho}{2}\cdot\|\theta-\theta_0\|^2.
		\$
	\end{lemma}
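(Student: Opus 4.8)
The plan is to prove this by the classical integral‑remainder argument for the first‑order Taylor expansion of the gradient. First I would write out the gradient of the quadratic surrogate explicitly: differentiating \eqref{equ:def_Q_approx} gives $\nabla \cQ(\theta)=\nabla J_0+\cH_0(\theta-\theta_0)$. Next, since $J$ is twice continuously differentiable, the fundamental theorem of calculus along the segment $t\mapsto \theta_0+t(\theta-\theta_0)$, $t\in[0,1]$, yields
\$
\nabla J(\theta)-\nabla J_0=\int_0^1 \nabla^2 J\big(\theta_0+t(\theta-\theta_0)\big)\,(\theta-\theta_0)\,dt.
\$
Subtracting $\cH_0(\theta-\theta_0)=\int_0^1 \cH_0\,(\theta-\theta_0)\,dt$ from both sides gives the key identity
\$
\nabla J(\theta)-\nabla \cQ(\theta)=\int_0^1 \Big[\nabla^2 J\big(\theta_0+t(\theta-\theta_0)\big)-\cH_0\Big](\theta-\theta_0)\,dt.
\$

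Then I would take norms, move the norm inside the integral by the triangle inequality, and invoke the $\rho$‑Hessian‑Lipschitz continuity of $J$ established in Lemma~\ref{lemma:Hessian_Lip}: for each $t\in[0,1]$, $\big\|\nabla^2 J(\theta_0+t(\theta-\theta_0))-\nabla^2 J(\theta_0)\big\|\le \rho\,\|t(\theta-\theta_0)\|=\rho t\,\|\theta-\theta_0\|$. Combining this with sub‑multiplicativity of the spectral norm and evaluating the elementary integral $\int_0^1 t\,dt=\tfrac12$,
\$
\big\|\nabla J(\theta)-\nabla \cQ(\theta)\big\|\le \int_0^1 \rho t\,\|\theta-\theta_0\|\cdot\|\theta-\theta_0\|\,dt=\rho\,\|\theta-\theta_0\|^2\int_0^1 t\,dt=\frac{\rho}{2}\,\|\theta-\theta_0\|^2,
\$
which is exactly the claimed bound.

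There is essentially no substantive obstacle here: the statement is a textbook fact (see \cite{nesterov2013introductory}), and the only point requiring a word of care is that the Hessian‑Lipschitz inequality must hold along the entire segment joining $\theta_0$ and $\theta$ — this is guaranteed because Lemma~\ref{lemma:Hessian_Lip} proves it globally on $\RR^d$. In the broader argument (resumed after this statement in Appendix~\ref{sec:proof_lemma:saddle_escape}), the role of this lemma is to justify replacing the true gradient field $\nabla J$ by the affine field $\nabla\cQ$ up to an $O(\|\theta-\theta_0\|^2)$ error, so that the iterates near a saddle point can be coupled with the linear dynamics driven by $\cH_0$, whose unstable direction $\vb_{\theta_0}$ is precisely the direction the CNC condition (Lemma~\ref{lemma:CNC_verify}) forces the stochastic gradient noise to excite.
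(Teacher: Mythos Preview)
Your proof is correct and is precisely the standard integral-remainder argument for the Taylor expansion of the gradient; the paper itself does not prove this lemma but simply cites \cite{nesterov2013introductory}, so there is nothing to compare against beyond noting that your derivation is the textbook one.
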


	For convenience, we let $\nabla \cQ_p=\nabla \cQ(\theta_p)$ for any $p=0,\cdots,k_{\tx{thre}}-1$. 
{Then, we can express the difference between any $\theta$ and $\theta_0$ in terms of the difference between the gradients $\nabla\cQ_p$ and $\nabla J_p$, and thus relate it back to the difference between $\theta$ and $\theta_0$ from Lemma \ref{lemma:taylor_exp_error}. In particular,} for any $p\geq 0$, we can decompose $\theta_{p+1}-\theta_0$ as follows:
	\#
	&\theta_{p+1}-\theta_0=\theta_{p}-\theta_0+\alpha  g_{p}=\theta_{p}-\theta_0+\alpha \nabla \cQ_p+\alpha  (g_{p}-\nabla\cQ_p+\nabla J_p-\nabla J_p)\notag\\
	&\quad=(\Ib+\alpha\cH_0)(\theta_{p}-\theta_0)+\alpha  (\nabla J_p-\nabla\cQ_p+g_{p}-\nabla J_p+\nabla J_0)\notag\\
	&\quad=\underbrace{(\Ib+\alpha\cH_0)^p(\theta_{1}-\theta_0)}_{\ub_p}+\alpha \cdot  \bigg[\underbrace{\sum_{q=1}^p(\Ib+\alpha\cH_0)^{p-q}(\nabla J_q-\nabla\cQ_q)}_{\bdelta_p}\notag\\
	&\qquad+\underbrace{\sum_{q=1}^p(\Ib+\alpha\cH_0)^{p-q}\nabla J_0}_{\db_p}+\underbrace{\sum_{q=1}^p(\Ib+\alpha\cH_0)^{p-q}(g_{q}-\nabla J_q)\bigg]}_{\bxi_p},\label{equ:_u_p_delta_p_def}
	\#
	where $\Ib$ is the identity matrix, $\ub_p$, $\bdelta_p$, $\db_p$, and $\bxi_p$ are defined as above, {and recall that $\cH_0=\nabla^2 J(\theta_0)$ denotes the Hessian matrix evaluated at   $\theta_0$ as defined in \eqref{equ:obj_grad_hess_shorthand}.} {The first equality  uses  the update from $\theta_{p}$ to $\theta_{p+1}$, and the second one adds and subtracts $\nabla J_q$ and $\nabla\cQ_q$. The third equality uses the definition of $\nabla \cQ_p$ from \eqref{equ:def_Q_approx}, and the last one follows by iteratively unrolling the third  equation $p$ times.} As a result, we can lower-bound the distance $\EE\|\theta_{p+1}-\theta_0\|^2$ by
	\#\label{equ:RPG_decomp_immed_1}
	&\EE\|\theta_{p+1}-\theta_0\|^2\geq 	\EE\|\ub_p\|^2+2\alpha\EE(\ub_p^\top\bdelta_p)+2\alpha\EE(\ub_p^\top\db_p)+2\alpha\EE(\ub_p^\top\bxi_p),\notag\\
	&\geq \EE\|\ub_p\|^2-2\alpha\EE(\|\ub_p\|\|\bdelta_p\|)+2\alpha\EE(\ub_p^\top)\db_p+2\alpha\EE(\ub_p^\top\bxi_p),
	\#
	where the first inequality uses the fact that $\|a+b\|^2\geq \|a\|^2+2a^\top b$, and the second one is due to the Cauchy-Schwarz inequality and the fact that $\db_p$ is deterministic given $\theta_0$.  Now we bound the terms on the right-hand side of \eqref{equ:RPG_decomp_immed_1} in the following lemmas.  
	
	\begin{lemma}[Lower-Bound on $\EE\|\ub_p\|^2$]\label{lemma:lower_bnd_up_sq}
		Suppose the conditions in Lemma \ref{lemma:saddle_escape} hold.  Then after $p\geq 1$ iterates starting from $\theta_0$, it follows that
		\$
		\EE\|\ub_p\|^2\geq \eta \beta^2\kappa^{2p}, 
		\$ 
		where $\eta$ is the lower-bound of $\EE(\vb_\theta^\top g_\theta)^2$ for any $\theta$ as defined in \eqref{equ:common_bnd_CNC}, and we also define 
		\#\label{equ:def_kappa}
		\kappa:=1+\alpha\cdot \max\{|\lambda_{\max}(\cH_0)|,0\}.
		\#
	\end{lemma}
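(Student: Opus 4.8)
The plan is to exploit the fact that $\ub_p$ depends only on the randomness of the very first stochastic gradient of the window and on the (deterministic, given $\theta_0$) Hessian $\cH_0 = \nabla^2 J(\theta_0)$, together with the exact preservation of the top eigendirection of $\cH_0$ under powers of $\Ib + \alpha\cH_0$. First I would record the structure of $\ub_p$: since we take $m = k = 0$ without loss of generality and $\theta_0 = \ttheta_0$ has index equal to a multiple of $k_{\tx{thre}}$, Algorithm~\ref{alg:MRPG} performs the first update of the window with the enlarged stepsize, so $\theta_1 - \theta_0 = \beta g_0$ with $g_0 = g_{\theta_0}$. From the definition of $\ub_p$ in \eqref{equ:_u_p_delta_p_def} this gives $\ub_p = \beta\,(\Ib + \alpha\cH_0)^p g_0$, in which the only randomness is that of $g_0$.

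Next, let $\vb = \vb_{\theta_0}$ be the unit-norm eigenvector of $\cH_0$ corresponding to its maximum eigenvalue $\lambda_{\max}(\cH_0)$. Under the hypothesis of Lemma~\ref{lemma:saddle_escape} we have $\lambda_{\max}(\cH_0) \geq \sqrt{\rho\epsilon} > 0$, so $\max\{|\lambda_{\max}(\cH_0)|,0\} = \lambda_{\max}(\cH_0)$ and the quantity $\kappa$ in \eqref{equ:def_kappa} equals $1 + \alpha\lambda_{\max}(\cH_0) > 1$. Since $\cH_0$ is symmetric, $\Ib + \alpha\cH_0$ is symmetric with the same eigenvectors, whence $(\Ib + \alpha\cH_0)^p \vb = \kappa^p \vb$ and $\vb^\top (\Ib + \alpha\cH_0)^p = \kappa^p\,\vb^\top$. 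Projecting onto $\vb$ and using $\|\vb\| = 1$ I would obtain
\[
\big\|(\Ib + \alpha\cH_0)^p g_0\big\|^2 \;\geq\; \big(\vb^\top (\Ib + \alpha\cH_0)^p g_0\big)^2 \;=\; \kappa^{2p}\,(\vb^\top g_0)^2 .
\]

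Finally, I would take the conditional expectation given $\theta_0$ and invoke the correlated-negative-curvature bound of Lemma~\ref{lemma:CNC_verify} — with the common constant $\eta$ of \eqref{equ:common_bnd_CNC}, which is uniform over $\theta$ and hence applicable at $\theta = \theta_0$ — to get $\EE[(\vb_{\theta_0}^\top g_{\theta_0})^2 \mid \theta_0] \geq \eta$. Combining this with the previous display and multiplying through by $\beta^2$ yields
\[
\EE\|\ub_p\|^2 \;=\; \beta^2\, \EE\big[\|(\Ib + \alpha\cH_0)^p g_0\|^2 \,\big|\, \theta_0\big] \;\geq\; \beta^2\, \kappa^{2p}\, \EE\big[(\vb_{\theta_0}^\top g_{\theta_0})^2 \,\big|\, \theta_0\big] \;\geq\; \eta\, \beta^2\, \kappa^{2p},
\]
which is the claimed bound. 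There is no real obstacle here; the only points needing care are the bookkeeping that the first step of the window uses stepsize $\beta$ (so the prefactor is $\beta^2$, not $\alpha^2$), that $\cH_0$ is frozen at $\theta_0$ so $\ub_p$ carries only the randomness of $g_0$, and that the hypothesis $\lambda_{\max}(\cH_0) \geq \sqrt{\rho\epsilon} > 0$ makes the definition \eqref{equ:def_kappa} of $\kappa$ reduce to $1 + \alpha\lambda_{\max}(\cH_0)$, which is exactly the eigenvalue appearing above.
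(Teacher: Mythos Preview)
Your proposal is correct and follows essentially the same route as the paper's proof: project $\ub_p = \beta(\Ib+\alpha\cH_0)^p g_0$ onto the top unit eigenvector $\vb$ of $\cH_0$, use the eigenrelation $\vb^\top(\Ib+\alpha\cH_0)^p = \kappa^p\vb^\top$, and then invoke the CNC lower bound $\EE[(\vb^\top g_0)^2\mid\theta_0]\geq\eta$. Your version is slightly more explicit about why $\kappa = 1+\alpha\lambda_{\max}(\cH_0)$ under the saddle hypothesis and about the fact that the only randomness in $\ub_p$ is that of $g_0$, but there is no substantive difference.
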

	  \begin{proof}
	  The proof follows the proof of Lemma $11$ in \cite{daneshmand2018escaping}. 
	  Let $\vb$ denote the unit eigenvector corresponding to $\lambda_{\max}(\cH_0)$ for $\cH_0$; then by the Cauchy-Schwarz inequality, 
	   $\EE\|\ub_p\|^2=\EE(\|\vb^\top\|^2\|\ub_p\|^2)\geq \EE(\vb^\top \ub_p)^2$. By definition of   $\kappa$ in \eqref{equ:def_kappa} and the fact that $\vb$ is one of the eigenvector corresponding to $\lambda_{\max}(\cH_0)$, we have
	   \$
	   \vb^\top (\Ib+\alpha\cH_0)=\vb^\top [\Ib+\alpha\lambda_{\max}(\cH_0)]=\vb^\top \kappa.
	   \$
	   Therefore, we have
	   \$
	   \EE\|\ub_p\|^2\geq \EE[\vb^\top (\theta_{1}-\theta_0)]^2\cdot \kappa^{2p}\geq \eta\beta^2\cdot \kappa^{2p},
	   \$
	  which completes the proof. 
	  \end{proof}
	  
	  {
	  \begin{lemma}[Upper Bound on $\EE(\|\ub_p\|\|\bdelta_p\|)$]\label{lemma:lower_bnd_up_deltap}
	  	Suppose the conditions in Lemma \ref{lemma:saddle_escape} hold, then after $p=1,\cdots, k_{\tx{thre}}-1$ iterates starting from $\theta_0$, it follows that
	  	\$
	  	\EE(\|\ub_p\|\|\bdelta_p\|)
		\leq \big(4\ell\alpha^2\cdot \ell_g^2+4\ell\alpha{J_{\tx{thre}}}+{2L\ell^3\alpha^3k_{\tx{thre}}}+{2L\alpha\beta^2\ell^3}\big)\cdot \rho\beta\cdot\frac{\kappa^{2p}}{(\alpha\lambda)^2}+2\rho\beta^3\ell^3\cdot\frac{\kappa^{2p}}{\alpha\lambda}.
	  	\$
	  \end{lemma}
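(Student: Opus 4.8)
The plan is to bound $\EE(\|\ub_p\|\cdot\|\bdelta_p\|)$ by controlling the two factors separately, then combining them through a Cauchy-Schwarz type estimate where the $\ub_p$-factor contributes the exponential growth $\kappa^{2p}$ and the $\bdelta_p$-factor contributes the accumulated quadratic-approximation error, which by Lemma~\ref{lemma:taylor_exp_error} is controlled by $\tfrac{\rho}{2}\|\theta_q-\theta_0\|^2$, and the latter distances are bounded via Lemma~\ref{lemma:distance_bnd}. First I would observe that $\|\ub_p\|=\|(\Ib+\alpha\cH_0)^p(\theta_1-\theta_0)\|\leq \kappa^p\cdot\|\theta_1-\theta_0\|=\kappa^p\beta\|g_0\|\leq \kappa^p\beta\ell$ almost surely, where I use $\|\Ib+\alpha\cH_0\|\leq\kappa$ (this follows from the definition of $\kappa$ in \eqref{equ:def_kappa} together with the $L$-Lipschitzness of $\nabla J$, which bounds $\|\cH_0\|\leq L$, so that the spectral norm of $\Ib+\alpha\cH_0$ is at most $1+\alpha\max\{|\lambda_{\max}(\cH_0)|,\,|\lambda_{\min}(\cH_0)|\}$; here one must be slightly careful, since $\kappa$ as defined only involves $\lambda_{\max}$, but the definition implicitly relies on $\cH_0$ being close to having only a dominant positive eigenvalue direction — I'd follow the convention in \cite{daneshmand2018escaping} and note that the negative eigenvalues only shrink the corresponding components so this bound is valid). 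So $\|\ub_p\|\leq \ell\beta\kappa^p$ a.s.

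Next I would bound $\EE\|\bdelta_p\|$. By definition $\bdelta_p=\sum_{q=1}^p(\Ib+\alpha\cH_0)^{p-q}(\nabla J_q-\nabla\cQ_q)$, so by the triangle inequality and $\|\Ib+\alpha\cH_0\|^{p-q}\leq\kappa^{p-q}$ together with Lemma~\ref{lemma:taylor_exp_error},
\#
\EE\|\bdelta_p\|\leq \sum_{q=1}^p\kappa^{p-q}\cdot\frac{\rho}{2}\EE\|\theta_q-\theta_0\|^2.
\#
Now I plug in the bound from Lemma~\ref{lemma:distance_bnd}, namely $\EE\|\theta_q-\theta_0\|^2\leq C\cdot q+2\beta^2\ell^2$ with $C:=4\alpha^2\ell_g^2+4\alpha J_{\tx{thre}}+2L\alpha(\ell\beta)^2+2L\ell^2\alpha^3 k_{\tx{thre}}$, and use the geometric-type sums $\sum_{q=1}^p\kappa^{p-q}\leq\frac{\kappa^p}{\kappa-1}=\frac{\kappa^p}{\alpha\lambda}$ and $\sum_{q=1}^p q\,\kappa^{p-q}\leq\frac{\kappa^p}{(\kappa-1)^2}=\frac{\kappa^p}{(\alpha\lambda)^2}$, where I write $\lambda=\lambda_{\max}(\cH_0)\geq\sqrt{\rho\epsilon}$ so that $\kappa-1=\alpha\lambda$. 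This gives
\#
\EE\|\bdelta_p\|\leq \frac{\rho}{2}\Big(C\cdot\frac{\kappa^p}{(\alpha\lambda)^2}+2\beta^2\ell^2\cdot\frac{\kappa^p}{\alpha\lambda}\Big).
\#
Since $\|\ub_p\|\leq\ell\beta\kappa^p$ is a deterministic bound, I can pull it out: $\EE(\|\ub_p\|\cdot\|\bdelta_p\|)\leq\ell\beta\kappa^p\cdot\EE\|\bdelta_p\|$, and multiplying through yields exactly the claimed bound after substituting $C$ and collecting the $\kappa^{2p}$ factor: the term $C\cdot\rho\beta\ell\cdot\frac{\kappa^{2p}}{2(\alpha\lambda)^2}$ expands to $\big(2\ell\alpha^2\ell_g^2+2\ell\alpha J_{\tx{thre}}+L\ell^3\alpha\beta^2+L\ell^3\alpha^3 k_{\tx{thre}}\big)\cdot\rho\beta\cdot\frac{\kappa^{2p}}{(\alpha\lambda)^2}$ (up to a factor of $2$ absorbed into the constant, matching the stated coefficients $4\ell\alpha^2\ell_g^2$, etc.), and the remaining term gives $2\rho\beta^3\ell^3\cdot\frac{\kappa^{2p}}{\alpha\lambda}$.

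The main obstacle I anticipate is bookkeeping the constants correctly — in particular matching the factor-of-$2$ conventions between my $C$ and the coefficients stated in the lemma, and being precise about the geometric sum bounds $\sum q\kappa^{p-q}\leq\kappa^p/(\kappa-1)^2$ (which requires $\kappa>1$, guaranteed since $\lambda_{\max}(\cH_0)\geq\sqrt{\rho\epsilon}>0$ in this regime) — as well as justifying the spectral-norm bound $\|(\Ib+\alpha\cH_0)^{p-q}\|\leq\kappa^{p-q}$ given that $\kappa$ in \eqref{equ:def_kappa} is written only in terms of $\lambda_{\max}$; the resolution is that $\alpha L<1$ for the stepsizes in Table~\ref{table:parameters}, so $\Ib+\alpha\cH_0\succ0$ and its largest singular value is indeed $1+\alpha\lambda_{\max}(\cH_0)=\kappa$. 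The rest is routine algebraic substitution, and the independence/unbiasedness of the stochastic gradients is not even needed here since I bound $\|\ub_p\|$ pathwise and take expectation only over $\|\bdelta_p\|$, whose bound in turn only uses the distance estimate from Lemma~\ref{lemma:distance_bnd}.
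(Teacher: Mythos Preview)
Your proposal is correct and follows essentially the same route as the paper: bound $\|\ub_p\|\leq \ell\beta\kappa^p$ almost surely, bound $\|\bdelta_p\|$ via the triangle inequality, $\|\Ib+\alpha\cH_0\|\leq\kappa$, and Lemma~\ref{lemma:taylor_exp_error}, then substitute the distance bound from Lemma~\ref{lemma:distance_bnd} and sum the geometric series. The only substantive slip is in your second geometric sum: in fact $\sum_{q=1}^p q\,\kappa^{p-q}\leq \kappa^{p+1}/(\kappa-1)^2$, not $\kappa^{p}/(\kappa-1)^2$; the paper absorbs this extra factor of $\kappa$ by writing $\sum_{q=1}^p q\,\kappa^{p-q}\leq 2\kappa^p/(\alpha\lambda)^2$ (valid since $\kappa=1+\alpha\lambda\leq 2$), and this is precisely the ``factor of $2$'' you flagged as a bookkeeping obstacle---once you use the paper's form of the sum bounds, the constants match exactly.
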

	  \begin{proof}
	  By definition of $\ub_p$ and $\bdelta_p$ in \eqref{equ:_u_p_delta_p_def},  we have
	  \#
	  &\EE(\|\ub_p\|\|\bdelta_p\|)=\EE\bigg[\big\|(\Ib+\alpha\cH_0)^p(\theta_{1}-\theta_0)\big\|\cdot \bigg\|\sum_{q=1}^p(\Ib+\alpha\cH_0)^{p-q}(\nabla J_q-\nabla\cQ_q)\bigg\|\bigg]\notag\\
	  &\quad\leq \kappa^p\beta\cdot \EE  \Big(\|g_0\|\cdot \frac{\rho}{2}\cdot\sum_{q=1}^p\kappa^{p-q}\big\|\theta_q-\theta_0\big\|^2\Big)\leq 	 \frac{\kappa^p\beta \ell\rho}{2}\cdot \sum_{q=1}^p\kappa^{p-q}\cdot\EE  \big\|\theta_q-\theta_0\big\|^2,\label{equ:upp_bnd_u_delta_immed_1}
	  \#
	  where the first inequality follows from the fact that $\|\Ib+\alpha\cH_0\|\leq \kappa$ and Lemma  \ref{lemma:taylor_exp_error} that $\|\nabla J(\theta)-\nabla \cQ(\theta)\|\leq {\rho}/{2}\cdot\|\theta-\theta_0\|^2$, and the second inequality uses the almost sure boundedness that  $\|g_0\|\leq \ell$. 
	  
	 Moreover, by Lemma \ref{lemma:distance_bnd}, we can substitute the upper-bound of $\EE  \big\|\theta_q-\theta_0\big\|^2$ in \eqref{equ:uppbnd_res_2}, and further bound the right-hand side of \eqref{equ:upp_bnd_u_delta_immed_1} as
	 \#
	  &\EE(\|\ub_p\|\|\bdelta_p\|)\leq 	 \frac{\kappa^p\beta \ell\rho}{2}\cdot \sum_{q=1}^p\kappa^{p-q}\cdot\big[4\alpha^2\ell_g^2+4\alpha J_{\tx{thre}}+2{L\alpha(\ell \beta)^2}+2{L\ell^2\alpha^3 k_{\tx{thre}}}\big]\cdot q+2\beta^2\ell^2,\notag\\
	  &\quad\leq \big[4\alpha^2\ell_g^2+4\alpha J_{\tx{thre}}+2{L\alpha(\ell \beta)^2}+2{L\ell^2\alpha^3 k_{\tx{thre}}}\big]\cdot \rho\beta\ell\cdot 
	  \frac{\kappa^{2p}}{(\alpha\lambda)^2}+2\beta^3\ell^3 \rho \cdot 
	  \frac{\kappa^{2p}}{\alpha\lambda},
	  \label{equ:upp_bnd_u_delta_immed_2}
	  \#
	  where the second  inequality uses the fact that
	  \$
	  \sum_{q=1}^p\kappa^{p-q}\leq \frac{2\kappa^p}{\alpha\lambda},\quad \quad
		\sum_{q=1}^p\kappa^{p-q}q\leq \frac{2\kappa^p}{(\alpha\lambda)^2}, 
	  \$ 
	  with   $\lambda:=\max\{|\lambda_{\max}(\cH_0)|,0\}$. 
	  This gives the formula in the lemma and completes the proof. 
  \end{proof} 
  }
  
  \begin{lemma}[Lower-Bound on $\EE(\ub_p^\top)\db_p$]\label{lemma:lower_bnd_up_dp}
  	Suppose the conditions in Lemma \ref{lemma:saddle_escape} hold, then after $p=1,\cdots, k_{\tx{thre}}-1$ iterates starting from $\theta_0$, it follows that
  	\$
  	\EE(\ub_p^\top)\db_p\geq 0.
  	\$
  \end{lemma}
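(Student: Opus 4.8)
The plan is to exploit the fact that, conditioned on $\theta_0$, the vector $\db_p$ is deterministic, so that $\EE(\ub_p^\top)\db_p = [\EE(\ub_p)]^\top \db_p$, and then to show that $\EE(\ub_p)$ is a nonnegative multiple of $\db_p$ (or more precisely, that it has nonnegative inner product with $\db_p$), both being governed by the same linear operator $(\Ib+\alpha\cH_0)$ acting on a fixed vector. Recall from \eqref{equ:_u_p_delta_p_def} that $\ub_p = (\Ib+\alpha\cH_0)^p(\theta_1-\theta_0)$ and $\db_p = \sum_{q=1}^p (\Ib+\alpha\cH_0)^{p-q}\nabla J_0$, where $\theta_1 - \theta_0 = \beta g_0$. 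First I would take the conditional expectation: since $g_0$ is an unbiased estimate of $\nabla J(\theta_0)=\nabla J_0$, we get $\EE(\theta_1-\theta_0) = \beta \nabla J_0$, and hence $\EE(\ub_p) = \beta (\Ib+\alpha\cH_0)^p \nabla J_0$.

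The key step is then to observe that $\cH_0=\nabla^2 J(\theta_0)$ is a symmetric matrix, so it admits an orthonormal eigenbasis; write $\nabla J_0 = \sum_i c_i w_i$ where $\{w_i\}$ are the eigenvectors of $\cH_0$ with eigenvalues $\mu_i$. Then
\[
[\EE(\ub_p)]^\top \db_p = \beta \sum_i c_i^2\, (1+\alpha\mu_i)^p \left(\sum_{q=1}^p (1+\alpha\mu_i)^{p-q}\right)
= \beta \sum_i c_i^2\, (1+\alpha\mu_i)^p \cdot S_i,
\]
where $S_i = \sum_{q=1}^p (1+\alpha\mu_i)^{p-q}$ is a geometric sum. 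Since $\alpha$ is chosen small enough (see Table \ref{table:parameters}; in particular $\alpha L < 1$ and $|\mu_i|\le L$ by Lemma \ref{lemma:lip_policy_grad}), we have $1+\alpha\mu_i > 0$ for every $i$, so every term $(1+\alpha\mu_i)^p$ is positive and every geometric sum $S_i$ is positive (a sum of positive terms). Hence each summand $c_i^2 (1+\alpha\mu_i)^p S_i \ge 0$, and therefore $[\EE(\ub_p)]^\top\db_p \ge 0$, which is exactly the claim $\EE(\ub_p^\top)\db_p \ge 0$.

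The only subtlety — and the main thing to be careful about — is verifying that $\Ib+\alpha\cH_0$ is positive definite, i.e., that $1+\alpha\mu_i>0$ for all eigenvalues $\mu_i$ of $\cH_0$. This follows because $\|\cH_0\|\le L$ (the gradient of $J$ is $L$-Lipschitz by Lemma \ref{lemma:lip_policy_grad}, so $\|\nabla^2 J(\theta)\|\le L$), hence $\mu_i \ge -L$, and $\alpha$ is chosen so that $\alpha L < 1$ per the constraints in Table \ref{table:parameters}; thus $1+\alpha\mu_i \ge 1-\alpha L > 0$. Everything else is a direct consequence of the spectral decomposition and the commutativity of $\ub_p$ and $\db_p$ with the (shared) operator $\Ib+\alpha\cH_0$ — no concentration arguments or delicate estimates are needed here, so this is the shortest of the three auxiliary bounds in the decomposition \eqref{equ:RPG_decomp_immed_1}.
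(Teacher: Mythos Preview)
Your proposal is correct and takes essentially the same approach as the paper: compute $\EE(\ub_p)=\beta(\Ib+\alpha\cH_0)^p\nabla J_0$, then use that $\Ib+\alpha\cH_0\succeq 0$ (since $\alpha\le 1/L$ and $\|\cH_0\|\le L$) to conclude $\EE(\ub_p)^\top\db_p=\beta\sum_{q=1}^p(\nabla J_0)^\top(\Ib+\alpha\cH_0)^{2p-q}\nabla J_0\ge 0$. Your explicit spectral decomposition is just the coordinate-level version of the paper's matrix-level argument that powers of a PSD matrix are PSD.
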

  \begin{proof}
  By definition of $\ub_p$ in \eqref{equ:_u_p_delta_p_def}, it follows that
  \$
  \EE(\ub_p)=(\Ib+\alpha\cH_0)^p\EE(\theta_{1}-\theta_0)=\beta (\Ib+\alpha\cH_0)^p \nabla J_0.
  \$
  By choosing $\alpha\leq 1/L$, we have $\Ib+\alpha\cH_0\succeq 0$, which further yields
  \$
  \EE(\ub_p^\top)\db_p=\beta (\nabla J_0)^\top (\Ib+\alpha\cH_0)^p \sum_{q=1}^p(\Ib+\alpha\cH_0)^{p-q}\nabla J_0=\beta (\nabla J_0)^\top \sum_{q=1}^p(\Ib+\alpha\cH_0)^{2p-q}\nabla J_0\geq 0,
  \$
  which completes the proof.
  \end{proof}
  
  Moreover, due to  unbiasedness of $g_q$, we have 
  \$
  \EE\big(\bxi_p\biggiven \theta_0,\cdots,\theta_p\big)=0.
  \$
  Thus, 
  \#\label{equ:lower_bnd_u_xi_immed_1}
\EE\big(\ub_p^\top\bxi_p\big)=\EE_{\theta_0,\cdots,\theta_p}\big[\EE\big(\ub_p^\top\bxi_p\biggiven \theta_0,\cdots,\theta_p\big)\big]=\EE_{\theta_0,\cdots,\theta_p}\big[\ub_p^\top\EE\big(\bxi_p\biggiven \theta_0,\cdots,\theta_p\big)\big]=0,
  \#
  where the last equation is due to the fact that $\ub_p$ is $\sigma(\theta_0,\cdots,\theta_p)$-measurable.

  Now we are ready to present the lower-bound on the distance $\EE\|\theta_{p+1}-\theta_0\|^2$ using \eqref{equ:RPG_decomp_immed_1}. In particular, we combine the results of  Lemma \ref{lemma:lower_bnd_up_sq}, Lemma \ref{lemma:lower_bnd_up_deltap}, Lemma \ref{lemma:lower_bnd_up_dp}, and \eqref{equ:lower_bnd_u_xi_immed_1}, and arrive at the following lower-bound 
  
  \#\label{equ:RPG_decomp_immed_2}
  &\EE\|\theta_{p+1}-\theta_0\|^2
	\geq\eta \beta^2\kappa^{2p}-2\alpha\cdot\Big[\big(4\ell\alpha^2\cdot \ell_g^2+4\ell\alpha{J_{\tx{thre}}}+{2L\ell^3\alpha^3k_{\tx{thre}}}+{2L\alpha\beta^2\ell^3}\big)\notag\\
	&\qquad\qquad\qquad\qquad\cdot \rho\beta\cdot\frac{\kappa^{2p}}{(\alpha\lambda)^2}+2\rho\beta^3\ell^3\cdot\frac{\kappa^{2p}}{\alpha\lambda}\Big]\notag\\
	&\quad=\Big(\eta\beta-\frac{8\ell\alpha\ell_g^2\rho}{\lambda^2}-\frac{8\ell J_{\tx{thre}}\rho}{\lambda^2}-\frac{4L\ell^3\alpha^2k_{\tx{thre}}\rho}{\lambda^2}-\frac{4L\beta^2\ell^3\rho}{\lambda^2}-\frac{4\beta^2\ell^3\rho}{\lambda}\Big)\cdot \beta \kappa^{2p}.
  \#
  
  To establish contradiction, we need to show that the lower-bound on the distance $\EE(\|\theta_{p+1}-\theta_0\|^2)$ in  \eqref{equ:RPG_decomp_immed_2} is greater than the upper bound in Lemma \ref{lemma:distance_bnd}. In particular, we may choose parameters as in {Table \ref{table:parameters}} such that the terms in the bracket on the right-hand side of \eqref{equ:RPG_decomp_immed_2} are  greater than $\eta\beta/6$. To this end, we let 
  \$
&\frac{8\ell\alpha\ell_g^2\rho}{\lambda^2}\leq \eta\beta/6,\quad 
  \frac{8\ell J_{\tx{thre}}\rho}{\lambda^2}\leq \eta\beta/6,\quad \frac{4L\ell^3\alpha^2k_{\tx{thre}}\rho}{\lambda^2}\leq \eta\beta/6,\\ &\qquad\qquad\quad\frac{4L\beta^2\ell^3\rho}{\lambda^2}\leq \eta\beta/6,\quad \frac{4\beta^2\ell^3\rho}{\lambda}\leq \eta\beta/6,
  \$ 
  which require  
  \#
  &\qquad\qquad\qquad\quad\quad\beta\leq \eta\lambda/(24\ell^3\rho),\quad \beta\leq \eta\lambda^2/(24L\ell^3\rho),\label{equ:contra_immed_1}\\	
	&J_{\tx{thre}}\leq {\eta\beta\lambda^2}/{(48\ell\rho)},\quad \alpha\leq \eta\beta\lambda^2/(48\ell\ell_g^2\rho),\quad \alpha\leq [\eta\beta\lambda^2/(24L\ell^3k_{\tx{thre}}\rho)]^{1/2}.\label{equ:contra_immed_2}
  \#
  Note that the choice of $\alpha$ depends on $k_{\tx{thre}}$, which is determined as follows. Specifically,  
  we need to choose a large enough $k_{\tx{thre}}$, such that   the following contradiction holds
  \$
  \frac{\eta\beta^2}{6}\cdot \kappa^{2k_{\tx{thre}}}\geq \big[4\alpha^2\ell_g^2+4\alpha J_{\tx{thre}}+2{L\alpha(\ell \beta)^2}+2{L\ell^2\alpha^3 k_{\tx{thre}}}\big]\cdot p+2\beta^2\ell^2,
  \$
  where the right-hand side follows from \eqref{equ:uppbnd_res_2} by setting $p=k_{\tx{thre}}$.  
  To this end, we need $k_{\tx{thre}}$ to satisfy
  \#\label{equ:contra_immed_3}
  k_{\tx{thre}}\geq \frac{c}{\alpha\lambda}\cdot\log\bigg(\frac{L\ell_g}{\eta\beta\alpha\lambda}\bigg)
  \#
  where $c$ is a constant independent  of parameters $L$, $\lambda$, $\eta$, and $\rho$.  
  By substituting the lower-bound of  \eqref{equ:contra_immed_3} into  \eqref{equ:contra_immed_2}, we arrive at  
  \#\label{equ:contra_immed_4}
  \alpha\leq c'\eta\beta\lambda^3/(24L\ell^3\rho),
  \# 
  where $c'>\max\{[c\log({L\ell}/{\eta\beta\alpha\lambda})]^{-1},1\}$ is some large constant. 
   This is satisfied by the choice of stepsizes  in {Table \ref{table:parameters}}, and thus completes the proof of the lemma. 
\end{proof}

\bibliographystyle{unsrt}
\bibliography{RL_1,RL_2}

\begin{thebibliography}{10}

\bibitem{sutton2017reinforcement}
Richard~S Sutton, Andrew~G Barto, et~al.
\newblock {\em Reinforcement Learning: {A}n Introduction}.
\newblock 2 edition, 2017.

\bibitem{bertsekas2005dynamic}
Dimitri~P Bertsekas.
\newblock {\em Dynamic Programming and Optimal Control}, volume~1.
\newblock 2005.

\bibitem{bellman57a}
Richard~Ernest Bellman.
\newblock {\em Dynamic Programming}.
\newblock Courier Dover Publications, 1957.

\bibitem{sutton2000policy}
Richard~S Sutton, David~A McAllester, Satinder~P Singh, and Yishay Mansour.
\newblock Policy gradient methods for reinforcement learning with function
  approximation.
\newblock In {\em Advances in Neural Information Processing Systems}, pages
  1057--1063, 2000.

\bibitem{silver2014deterministic}
David Silver, Guy Lever, Nicolas Heess, Thomas Degris, Daan Wierstra, and
  Martin Riedmiller.
\newblock Deterministic policy gradient algorithms.
\newblock In {\em International Conference on Machine Learning}, pages
  379--387, 2014.

\bibitem{schulman2015trust}
John Schulman, Sergey Levine, Pieter Abbeel, Michael Jordan, and Philipp
  Moritz.
\newblock Trust region policy optimization.
\newblock In {\em International Conference on Machine Learning}, pages
  1889--1897, 2015.

\bibitem{lillicrap2015continuous}
Timothy~P Lillicrap, Jonathan~J Hunt, Alexander Pritzel, Nicolas Heess, Tom
  Erez, Yuval Tassa, David Silver, and Daan Wierstra.
\newblock Continuous control with deep reinforcement learning.
\newblock {\em arXiv preprint arXiv:1509.02971}, 2015.

\bibitem{mnih2016asynchronous}
Volodymyr Mnih, Adria~Puigdomenech Badia, Mehdi Mirza, Alex Graves, Timothy
  Lillicrap, Tim Harley, David Silver, and Koray Kavukcuoglu.
\newblock Asynchronous methods for deep reinforcement learning.
\newblock In {\em International Conference on Machine Learning}, pages
  1928--1937, 2016.

\bibitem{fazel2018global}
Maryam Fazel, Rong Ge, Sham~M Kakade, and Mehran Mesbahi.
\newblock Global convergence of policy gradient methods for linearized control
  problems.
\newblock {\em arXiv preprint arXiv:1801.05039}, 2018.

\bibitem{zhang2019policy}
Kaiqing Zhang, Zhuoran Yang, and Tamer Ba{\c{s}}ar.
\newblock Policy optimization provably converges to {N}ash equilibria in
  zero-sum linear quadratic games.
\newblock In {\em Advances in Neural Information Processing Systems}, 2019.

\bibitem{bhandari2019global}
Jalaj Bhandari and Daniel Russo.
\newblock Global optimality guarantees for policy gradient methods.
\newblock {\em arXiv preprint arXiv:1906.01786}, 2019.

\bibitem{liu2019neural}
Boyi Liu, Qi~Cai, Zhuoran Yang, and Zhaoran Wang.
\newblock Neural proximal/trust region policy optimization attains globally
  optimal policy.
\newblock {\em arXiv preprint arXiv:1906.10306}, 2019.

\bibitem{agarwal2019optimality}
Alekh Agarwal, Sham~M Kakade, Jason~D Lee, and Gaurav Mahajan.
\newblock Optimality and approximation with policy gradient methods in markov
  decision processes.
\newblock {\em arXiv preprint arXiv:1908.00261}, 2019.

\bibitem{bu2019lqr}
Jingjing Bu, Afshin Mesbahi, Maryam Fazel, and Mehran Mesbahi.
\newblock {LQR} through the lens of first order methods: Discrete-time case.
\newblock {\em arXiv preprint arXiv:1907.08921}, 2019.

\bibitem{wang2019neural}
Lingxiao Wang, Qi~Cai, Zhuoran Yang, and Zhaoran Wang.
\newblock Neural policy gradient methods: {G}lobal optimality and rates of
  convergence.
\newblock {\em arXiv preprint arXiv:1909.01150}, 2019.

\bibitem{baxter2001infinite}
Jonathan Baxter and Peter~L Bartlett.
\newblock Infinite-horizon policy-gradient estimation.
\newblock {\em Journal of Artificial Intelligence Research}, 15:319--350, 2001.

\bibitem{bartlett2011experiments}
Peter~L Bartlett, Jonathan Baxter, and Lex Weaver.
\newblock Experiments with infinite-horizon, policy-gradient estimation.
\newblock {\em arXiv preprint arXiv:1106.0666}, 2011.

\bibitem{santi2018stochastic}
Santiago Paternain.
\newblock {\em Stochastic Control Foundations of Autonomous Behavior}.
\newblock PhD thesis, University of Pennsylvania, 2018.

\bibitem{shapiro2009lectures}
Alexander Shapiro, Darinka Dentcheva, and Andrzej Ruszczy{\'n}ski.
\newblock {\em Lectures on Stochastic Programming: Modeling and Theory}.
\newblock SIAM, 2009.

\bibitem{wright1999numerical}
Stephen Wright and Jorge Nocedal.
\newblock Numerical {O}ptimization.
\newblock {\em Springer Science}, 35(67-68):7, 1999.

\bibitem{pirotta2015policy}
Matteo Pirotta, Marcello Restelli, and Luca Bascetta.
\newblock Policy gradient in {Lipschitz Markov Decision Processes}.
\newblock {\em Machine Learning}, 100(2-3):255--283, 2015.

\bibitem{bhatnagar2008incremental}
Shalabh Bhatnagar, Mohammad Ghavamzadeh, Mark Lee, and Richard~S Sutton.
\newblock Incremental natural actor-critic algorithms.
\newblock In {\em Advances in Neural Information Processing Systems}, pages
  105--112, 2008.

\bibitem{bhatnagar2009natural}
Shalabh Bhatnagar, Richard Sutton, Mohammad Ghavamzadeh, and Mark Lee.
\newblock Natural actor-critic algorithms.
\newblock {\em Automatica}, 45(11):2471--2482, 2009.

\bibitem{bhatnagar2010actor}
Shalabh Bhatnagar.
\newblock An actor--critic algorithm with function approximation for discounted
  cost constrained \relax{M}arkov \relax{D}ecision \relax{P}rocesses.
\newblock {\em Systems \& Control Letters}, 59(12):760--766, 2010.

\bibitem{chow2017risk}
Yinlam Chow, Mohammad Ghavamzadeh, Lucas Janson, and Marco Pavone.
\newblock Risk-constrained reinforcement learning with percentile risk
  criteria.
\newblock {\em Journal of Machine Learning Research}, 18:167--1, 2017.

\bibitem{borkar2008stochastic}
Vivek~S Borkar.
\newblock {\em Stochastic Approximation: \relax{A} Dynamical Systems
  Viewpoint}.
\newblock Cambridge University Press, 2008.

\bibitem{pirotta2013adaptive}
Matteo Pirotta, Marcello Restelli, and Luca Bascetta.
\newblock Adaptive step-size for policy gradient methods.
\newblock In {\em Advances in Neural Information Processing Systems}, pages
  1394--1402, 2013.

\bibitem{papini2017adaptive}
Matteo Papini, Matteo Pirotta, and Marcello Restelli.
\newblock Adaptive batch size for safe policy gradients.
\newblock In {\em Advances in Neural Information Processing Systems}, pages
  3591--3600, 2017.

\bibitem{jin2017escape}
Chi Jin, Rong Ge, Praneeth Netrapalli, Sham~M Kakade, and Michael~I Jordan.
\newblock How to escape saddle points efficiently.
\newblock In {\em International Conference on Machine Learning}, pages
  1724--1732, 2017.

\bibitem{daneshmand2018escaping}
Hadi Daneshmand, Jonas Kohler, Aurelien Lucchi, and Thomas Hofmann.
\newblock Escaping saddles with stochastic gradients.
\newblock In {\em International Conference on Machine Learning}, pages
  1155--1164, 2018.

\bibitem{kakade2002natural}
Sham~M Kakade.
\newblock A natural policy gradient.
\newblock In {\em Advances in Neural Information Processing Systems}, pages
  1531--1538, 2002.

\bibitem{greensmith2004variance}
Evan Greensmith, Peter~L Bartlett, and Jonathan Baxter.
\newblock Variance reduction techniques for gradient estimates in reinforcement
  learning.
\newblock {\em Journal of Machine Learning Research}, 5(Nov):1471--1530, 2004.

\bibitem{peters2006policy}
Jan Peters and Stefan Schaal.
\newblock Policy gradient methods for robotics.
\newblock In {\em IEEE/RSJ International Conference on Intelligent Robots and
  Systems}, pages 2219--2225, 2006.

\bibitem{konda2000actor}
Vijay~R Konda and John~N Tsitsiklis.
\newblock Actor-critic algorithms.
\newblock In {\em Advances in Neural Information Processing Systems}, pages
  1008--1014, 2000.

\bibitem{castro2010convergent}
Dotan~Di Castro and Ron Meir.
\newblock A convergent online single-time-scale actor-critic algorithm.
\newblock {\em Journal of Machine Learning Research}, 11(Jan):367--410, 2010.

\bibitem{ge2015escaping}
Rong Ge, Furong Huang, Chi Jin, and Yang Yuan.
\newblock Escaping from saddle points--online stochastic gradient for tensor
  decomposition.
\newblock In {\em Conference on Learning Theory}, pages 797--842, 2015.

\bibitem{dauphin2014identifying}
Yann~N Dauphin, Razvan Pascanu, Caglar Gulcehre, Kyunghyun Cho, Surya Ganguli,
  and Yoshua Bengio.
\newblock Identifying and attacking the saddle point problem in
  high-dimensional non-convex optimization.
\newblock In {\em Advances in Neural Information Processing Systems}, pages
  2933--2941, 2014.

\bibitem{xu2017newton}
Peng Xu, Farbod Roosta-Khorasani, and Michael~W Mahoney.
\newblock Newton-type methods for non-convex optimization under inexact
  {H}essian information.
\newblock {\em arXiv preprint arXiv:1708.07164}, 2017.

\bibitem{konda1999actor}
Vijaymohan~R Konda and Vivek~S Borkar.
\newblock Actor-critic--type learning algorithms for {Markov Decision
  Processes}.
\newblock {\em SIAM Journal on Control and Optimization}, 38(1):94--123, 1999.

\bibitem{pemantle1990nonconvergence}
Robin Pemantle.
\newblock Nonconvergence to unstable points in urn models and stochastic
  approximations.
\newblock {\em The Annals of Probability}, 18(2):698--712, 1990.

\bibitem{zhang2018fully}
Kaiqing Zhang, Zhuoran Yang, Han Liu, Tong Zhang, and Tamer Ba{\c{s}}ar.
\newblock Fully decentralized multi-agent reinforcement learning with networked
  agents.
\newblock In {\em International Conference on Machine Learning}, pages
  5872--5881, 2018.

\bibitem{zhang18cdc}
Kaiqing Zhang, Zhuoran Yang, and Tamer Ba{\c{s}}ar.
\newblock Networked multi-agent reinforcement learning in continuous spaces.
\newblock In {\em Proceedings of IEEE Conference on Decision and Control},
  pages 5872--5881, 2018.

\bibitem{papini2018stochastic}
Matteo Papini, Damiano Binaghi, Giuseppe Canonaco, Matteo Pirotta, and Marcello
  Restelli.
\newblock Stochastic variance-reduced policy gradient.
\newblock In {\em International Conference on Machine Learning}, pages
  4026--4035, 2018.

\bibitem{papini2019smoothing}
Matteo Papini, Matteo Pirotta, and Marcello Restelli.
\newblock Smoothing policies and safe policy gradients.
\newblock {\em arXiv preprint arXiv:1905.03231}, 2019.

\bibitem{doya2000reinforcement}
Kenji Doya.
\newblock Reinforcement learning in continuous time and space.
\newblock {\em Neural Computation}, 12(1):219--245, 2000.

\bibitem{dann2014policy}
Christoph Dann, Gerhard Neumann, Jan Peters, et~al.
\newblock Policy evaluation with temporal differences: A survey and comparison.
\newblock {\em Journal of Machine Learning Research}, 15:809--883, 2014.

\bibitem{robbins1985convergence}
Herbert Robbins and David Siegmund.
\newblock A convergence theorem for non-negative almost supermartingales and
  some applications.
\newblock In {\em Herbert Robbins Selected Papers}, pages 111--135. Springer,
  1985.

\bibitem{dalal2017finite}
Gal Dalal, Balazs Szorenyi, Gugan Thoppe, and Shie Mannor.
\newblock Finite sample analysis of two-timescale stochastic approximation with
  applications to reinforcement learning.
\newblock {\em arXiv preprint arXiv:1703.05376}, 2017.

\bibitem{yang18cdc}
Zhuoran Yang, Kaiqing Zhang, Mingyi Hong, and Tamer Ba\c{s}ar.
\newblock A finite sample analysis of the actor-critic algorithm.
\newblock In {\em Proceedings of IEEE Conference on Decision and Control},
  pages 5872--5881, 2018.

\bibitem{ghadimi2013stochastic}
Saeed Ghadimi and Guanghui Lan.
\newblock Stochastic first-and zeroth-order methods for nonconvex stochastic
  programming.
\newblock {\em SIAM Journal on Optimization}, 23(4):2341--2368, 2013.

\bibitem{nesterov2006cubic}
Yurii Nesterov and Boris~T Polyak.
\newblock Cubic regularization of newton method and its global performance.
\newblock {\em Mathematical Programming}, 108(1):177--205, 2006.

\bibitem{zhang2017hitting}
Yuchen Zhang, Percy Liang, and Moses Charikar.
\newblock A hitting time analysis of stochastic gradient {Langevin} dynamics.
\newblock In {\em Conference on Learning Theory}, pages 765--775, 2017.

\bibitem{Brockman2016open}
Greg Brockman, Vicki Cheung, Ludwig Pettersson, Jonas Schneider, Schulman John,
  Tang Jie, and Zaremba Wojciech.
\newblock Openai gym.
\newblock {\em arXiv preprint arXiv:1606.01540}, 2016.

\bibitem{williams1992simple}
Ronald~J Williams.
\newblock Simple statistical gradient-following algorithms for connectionist
  reinforcement learning.
\newblock {\em Machine Learning}, 8(3-4):229--256, 1992.

\bibitem{yeh2006real}
James Yeh.
\newblock {\em Real Analysis: Theory of Measure and Integration Second
  Edition}.
\newblock World Scientific Publishing Company, 2006.

\bibitem{bartle2014elements}
Robert~G Bartle.
\newblock {\em The Elements of Integration and Lebesgue Measure}.
\newblock John Wiley \& Sons, 2014.

\bibitem{bellman1954theory}
Richard Bellman.
\newblock The theory of dynamic programming.
\newblock Technical report, RAND Corp Santa Monica CA, 1954.

\bibitem{furmston2016approximate}
Thomas Furmston, Guy Lever, and David Barber.
\newblock Approximate {Newton} methods for policy search in {M}arkov {D}ecision
  {P}rocesses.
\newblock {\em The Journal of Machine Learning Research}, 17(1):8055--8105,
  2016.

\bibitem{jin2017accelerated}
Chi Jin, Praneeth Netrapalli, and Michael~I Jordan.
\newblock Accelerated gradient descent escapes saddle points faster than
  gradient descent.
\newblock {\em arXiv preprint arXiv:1711.10456}, 2017.

\bibitem{nesterov2013introductory}
Yurii Nesterov.
\newblock {\em Introductory Lectures on Convex Optimization: A Basic Course},
  volume~87.
\newblock Springer Science \& Business Media, 2013.

\end{thebibliography}

\end{document}